\theoremstyle{plain}
\newtheorem{theorem}{Theorem}[section]
\newtheorem{lemma}[theorem]{Lemma}
\newtheorem{proposition}[theorem]{Proposition}
\newtheorem{corollary}[theorem]{Corollary}
\theoremstyle{definition}
\theoremstyle{remark}
\newtheorem{remark}[theorem]{Remark}
\newcommand{\cref}[1]{Corollary \ref{#1}}
\newcommand{\nll}{\ll\!\!\!\!\!/\;}
\begin{document}

%\doublespacing

%%-----------------------------
%%      the top matter
%%-----------------------------
\title[Branch Groups with Non-Torsion Rigid Kernels]{A Class of Non-Contracting Branch Groups with Non-Torsion Rigid Kernels}
\author[Sagar Saha]{Sagar Saha}
\address{Department of Mathematics,  Indian Institute of Technology Guwahati, Guwahati, India}
\email{sagarsaha@iitg.ac.in}
\author[K. V. Krishna]{K. V. Krishna}
\address{Department of Mathematics, Indian Institute of Technology Guwahati, Guwahati, India}
\email{kvk@iitg.ac.in}

%\date{...}

\begin{abstract}
In this work, we provide the first example of an infinite family of branch groups in the class of non-contracting self-similar groups. We show that these groups are very strongly fractal, not regular branch, and of exponential growth. Further, we prove that these groups do not have the congruence subgroup property by explicitly calculating the structure of their rigid kernels. This class of groups is also the first example of branch groups with non-torsion rigid kernels. As a consequence of these results, we also determine the Hausdorff dimension of these groups.       
\end{abstract}

\subjclass[2020]{20E08,20E18}

\keywords{Groups acting on trees, self-similar group, branch group, non-contracting group, congruence subgroup problem, Hausdorff dimension.}

\maketitle

\vspace{.5cm}

\section{Introduction}
The self-similar groups, a class of groups acting on regular rooted trees, have gained prominence since Grigorchuk's pioneering work in the 1980s. Starting from the Grigorchuk group \cite{grigorchuk1980burnside}, various self-similar groups were studied in the literature; e.g., Gupta-Sidki $p$-groups \cite{gupta1983burnside}, Basilica group \cite{grigorchuk2002torsion}, Hanoi towers group \cite{Grigorchuk2006hanoi}, and their generalizations   \cite{bartholdi2001word, Domenico_2022_basilica, Skipper2020}. These groups have become a rich source of examples exhibiting exotic properties like, amenable but not elementary amenable, just infinite, intermediate word growth, finitely generated infinite torsion. In 1997, Grigorchuk introduced the concept of (weakly) branch groups, inspired by some well-structured subgroup formations in many famous examples.  The previously mentioned examples fall into the intersection of self-similar groups and (weakly) branch groups. The self-similar (weakly) branch groups
have received significant attention due to the rich set of tools they offer for studying various aspects like the congruence subgroup problem, Hausdorff dimension, maximal subgroups, commensurability, L-presentation,  and Engel elements  \cite{Bartholdi2003presentation, Bou-Rabee2020, Noce2020engel, Francoeur2020, Noce2021Hausdorff, Skipper2020}. 

The classical congruence subgroup problem for $SL(n;\mathds{Z})$ (see \cite{Bass_1964}) was extended to automorphism groups of regular rooted trees and studied by various authors. A subgroup $G$ of $\text{Aut}(T)$, the automorphism group of a regular rooted tree  $T$,  has the Congruence Subgroup Property (in short, CSP) if every finite index subgroup of $G$ contains a level stabilizer. This can be restated in terms of profinite completions as follows. The full profinite topology on $G$ is obtained by considering all finite index normal subgroups of $G$ as neighborhood basis for the identity element of $G$. The profinite topology obtained by restricting to the level stabilizers of $G$, rather than all finite index normal subgroups, is known as congruence topology on $G$. The profinite completion (denoted by $\widehat{G}$) and the congruence completion (denoted by $\overline{G}$) of $G$ is the completion of $G$ with respect to full profinite topology and congruence topology, respectively. The group $G$ satisfies CSP if the kernel of the natural surjection $\widehat{G} \twoheadrightarrow \overline{G}$, called the congruence kernel, is trivial, i.e., $\widehat{G}$ and $\overline{G}$ are isomorphic. For details, one may refer to \cite{Bartholdi_2012,Garrido_2016, Skipper2020}.   

In the context of branch groups, the rigid stabilizers of levels define another profinite topology, called branch topology. The branch completion $\widetilde{G}$ of $G$ is the completion of $G$ with respect to the branch topology. For a branch group $G$, the congruence kernel divides into two parts called branch kernel and rigid kernel which are the kernels of natural surjections $\widehat{G} \twoheadrightarrow \widetilde{G}$ and $\widetilde{G} \twoheadrightarrow \overline{G}$, respectively. The congruence subgroup problem for branch groups asks whether these surjections are isomorphisms; if not, quantitatively describe the respective kernels. 

There are good number of examples of branch groups with trivial congruence kernels (e.g., see \cite{Garrido_2017,  Garrido_2016_gupta, Garrido_2019, Grigorchuk2000}). The first example of branch groups with non-trivial congruence kernels was given by Pervova \cite{Pervova_2007}, although these groups have trivial rigid kernels. The Hanoi towers group $\Gamma_3$ on three pegs is the first branch group with non-trivial rigid kernel; in fact, the rigid kernel is Klein four-group \cite{Bartholdi_2012}. Bartholdi et al. \cite{Bartholdi_2012} studied the congruence subgroup problem for self-similar regular branch groups, as these groups possess finite index branching subgroups, and established that for such a group the branch kernel is abelian and the rigid kernel has finite exponent. Recently, Garrido and Sunic \cite{rigid2023branch} provided the first example of branch groups with infinite rigid kernels, and determined them as the elementary 2-group  $\prod_{\mathds{N}}\mathds{Z}/2\mathds{Z}$.

Almost all the examples studied in the literature have a specific property, which defines contracting groups. On the other hand, the following are only known examples of non-contracting weakly branch groups in the literature. The groups by Dahmani \cite{Dahmani2005}, Mamaghani \cite{mamaghani2011fractal} and Noce \cite{Noce2021} are non-contracting weakly branch groups. The Hanoi towers group $\Gamma_3$ is a contracting regular branch group; whereas, its generalization $\Gamma_d$ for $d \geq 4$ is a family of non-contracting weakly branch groups \cite{Grigorchuk2006hanoi, Thesis_Skipper}. Not much about these groups are known including whether these are even branch groups. The limitation of studying the properties of these groups is attributed to their non-contracting nature. Recently, in \cite{saha2023branch}, the authors introduced a class of non-contracting self-similar groups $G_d$, for integers $d \ge 3$. For odd values of $d$, the group $G_d$ is fractal and weakly branch over its commutator subgroup. Further, it was established that the group $G_3$ is a branch group; this is the first example of explicitly constructed branch group in the class of non-contracting self-similar groups.

In this paper, we further study the class of groups $G_d$ ($d \ge 3$) introduced in \cite{saha2023branch} and focus only on odd values of $d$. The results proved in this work for $G_d$ are for all odd values of $d \ge 3$. In Section \ref{growth_Gd}, we show that the subsemigroup generated by the defining generators of $G_d$ is a trace monoid and give the presentation of the semigroup (cf. Theorem \ref{trace}). Accordingly, we observe that the group $G_d$ is of exponential growth. Further, in Section \ref{str_prop}, we explicitly calculate the structures of $k$-th level stabilizer $\text{St}_{G_d}(\widehat{k})$  (cf. Theorem \ref{stab}), $k$-th level rigid stabilizer $\text{Rist}_{G_d}(\widehat{k})$  (cf. Theorem \ref{rist}), and their quotient $\text{St}_{G_d}(\widehat{k})/\text{Rist}_{G_d}(\widehat{k})$  (cf. Theorem \ref{quotient}). Consequently, we establish that  the group $G_d$ is very strongly fractal, branch but not regular branch group. As the tools available in the literature for studying congruence subgroup problem are for regular branch groups, they are not applicable for $G_d$. Nevertheless, in Section \ref{rigid_haus}, we explicitly calculate the structure of the rigid kernel of $G_d$ (cf. Theorem \ref{rig_ker}) and observe that the branch kernel of $G_d$ is nontrivial. Moreover, we establish that the closure of $G_d$ has Hausdorff dimension arbitrarily close to 1 (cf. Theorem \ref{haus}).

Thus, the groups $G_d$ ($d \ge 3$) provided in this work appear as the first example of an infinite family of branch groups constructed explicitly in the class of non-contracting self-similar groups. Further, this family also serves as the first example of branch groups with infinite rigid kernel containing elements of infinite order.

\section{Preliminaries}

In this section, we present the concepts used in this work and fix the notation. For more details, one may refer to \cite{Bartholdi2003,Grigorchuk2005,Nekrashevych2005}.  

For a set $A$, we write $A^*$ to denote the free monoid of the words over $A$ with respect to concatenation. The length of a word $w$ is denoted by $|w|$. The set of all words of length $k$ over a set $A$ is denoted by $A^k$. For an integer $d \ge 2$, let $X = \{1, 2, \ldots, d\}$. The $d$-regular rooted tree over $X$, denoted by $T_X$ (or simply $T$), is a tree with the vertex set $X^*$, rooted at the empty word, and every vertex $u$ has exactly $d$ children $ux$, for each $x \in X$. Thus, $X^k$ is the set of vertices of $T_X$ at $k$-th level. The set of all graph automorphisms of $T$ preserving the root, written $\text{Aut}(T)$, is a group with respect to composition of maps, $gh(u) = h(g(u))$ for $g, h \in \text{Aut}(T)$ and $u \in X^*$. We shall denote the identity element of $\text{Aut}(T)$ by $e$. For $g \in \text{Aut}(T)$, note that $g$ is length preserving.

Let $u \in X^*$ and $g \in \text{Aut}(T)$. The section of $g$ at $u$, denoted by $g|_u$, is an automorphism $g|_u :T \to T$ given by  $g|_u(v) = v'$, where $v' \in X^*$ is the unique string such that $g(uv) = g(u)v'$. For any $g, h \in \text{Aut}(T)$ and $u, v \in X^*$, it is evident that $g|_{uv} = g|_u|_v$, $gh|_u = g|_uh|_{g(u)}$ and $g^{-1}|_u = (g|_{g^{-1}(u)})^{-1}$.  

Let $S_d$ be the symmetric group on $X$. The group $\text{Aut}(T)$ can be identified as the wreath product $\text{Aut}(T)\wr S_d$ through the isomorphism $\psi$ given by, for $g  \in \text{Aut}(T)$, $$\psi(g) = (g|_1, g|_2, \ldots, g|_d)\lambda_g,$$  where $\lambda_g$ is the induced action of $g$ on the set $X$. For $g \in \text{Aut}(T)$, we often write $g$ in place of $\psi(g)$ so that $g = (g|_1, g|_2, \ldots, g|_d)\lambda_g$, called the wreath recursion of $g$. 

Let $G$ be a subgroup of $\text{Aut}(T)$, written $G \le \text{Aut}(T)$. Note that $G$ acts on the tree $T$. For each $k$, the \textit{$k$-th level stabilizer}, denoted by $\text{St}_G(\widehat{k})$, is the stabilizer of the set $X^k$, i.e., $\bigcap_{u \in X^k} \text{St}_G(u)$, the intersection of the stabilizers of vertices in the $k$-th level. The normal subgroup $\text{St}_G(\widehat{k})$ is precisely the kernel of the induced action of $G$ on the finite set $X^k$, and hence it has finite index in $G$.

A subgroup $G \leq \text{Aut}(T)$ is said to be \textit{self-similar} if $g|_u \in G$, for every $g \in G$ and $u \in X^*$. 
For a self-similar group $G$, the restriction of the map $\psi$ to $G$ embeds $G$ into the wreath product $G \wr S_d$. Further, the restriction of $\psi$ to $\text{St}_G(\widehat{1})$ induces an embedding $\psi_1: \text{St}_G(\widehat{1}) \rightarrow\;  \stackrel{d}{G\times \cdots \times G}$ given by $\psi_1(g) = (g|_1, g|_2, \ldots, g|_d)$, for $g \in \text{St}_G(\widehat{1}).$ Due to self-similarity of $G$, the homomorphism $\psi_1$ can be extended to each positive integer $k \in \mathds{N}$ in a natural way such that $\psi_k: \text{St}_G(\widehat{k}) \to\; \stackrel{d^k}{G\times \cdots \times G}$ is also injective, for all $k \in \mathds{N}$. Similar to $\psi$, for $g \in \text{St}_G(\widehat{k})$, we often write $g$ in place of $\psi_k(g)$.

Let $G \leq \text{Aut}(T)$ be a self-similar group acting transitively on each level $k$, i.e., for all $u, v \in X^k$, there exists an element $g \in G$ such that $g(u) = v$. The group $G$ is said to be \textit{fractal} if  $\pi_u(\text{St}_G(u)) = G$, for all vertices $u \in X^*$, where $\pi_u: \text{St}_G(u) \to G$ is a homomorphism given by $\pi_u(g) = g|_u$, for $g \in \text{St}_G(u)$. Further, for all $k \in \mathds{N}\cup \{0\}$ and $x \in X$, if  $\pi_x(\text{St}_G(\widehat{k+1})) = \text{St}_G(\widehat{k})$, then we say that  $G$ is \textit{very strongly fractal}.

A self-similar group $G \leq \text{Aut}(T)$ is said to be \textit{contracting} if there exists a finite subset $S \subseteq G$ satisfying the following: for every $g \in G$ there is a $k \in \mathds{N}$ such that $g|_u$ belongs to $S$, for all vertices $u \in X^*$ with $|u| \geq k$; otherwise, $G$ is said to be \textit{non-contracting}.

Let $G \le \text{Aut}(T)$. The \textit{rigid stabilizer} of a vertex $u$, denoted by $\text{Rist}_G(u)$, is the subgroup of $G$ consisting the elements that fix the vertices $uv$, for all $v \in X^*$. The \textit{$k$-th level rigid stabilizer}, denoted by $\text{Rist}_G(\widehat{k})$, is the subgroup generated by the elements of $\text{Rist}_G(u)$, for all $u \in X^k$. 

Let $G \le \text{Aut}(T)$ be a group acting transitively on each level $k$. The group $G$ is said to be a \textit{branch group} if the index $[G: \text{Rist}_G(\widehat{k})]$ of $\text{Rist}_G(\widehat{k})$ in $G$ is finite, for every level $k$. Whereas, the group $G$ is said to be \textit{weakly regular branch} over a nontrivial subgroup $K \leq G$ if $\stackrel{d}{K\times \cdots \times K} \le \psi(K \cap \text{St}_G(\widehat{1}))$. Additionally, if $[G: K] < \infty$, then $G$ is said to be \textit{regular branch} over $K$. 
Note that if a group is regular branch, then it is a branch group.

We use the following notations in the rest of the paper. For elements $g, h$ of a group, we write $[g, h]$ to denote $g^{-1}h^{-1}gh$, the commutator of $g$ and $h$. For a set $A$, let $\tilde{A} = A \cup A^{-1}$, where $A^{-1} = \{a^{-1}\ :\ a \in A\}$, the set of formal inverses of elements of $A$. Let $w$ be a word over $\tilde{A}$. For $p \in A$, we write $|w|_p$ to denote the exponent sum of $p$ in $w$. Further, $|w|_{A} = \sum_{a \in A} |w|_a$. For any $j \in \mathds{N}$, we write $\overline{j} \in \{1, \ldots, d\}$ such that $\overline{j} \equiv j\  (\text{mod}\; d)$. If a word $u$ is a subword of $w$, then we write $u \ll w$; otherwise, we write $u \nll w$.

We now recall the definition and certain properties of the group $G_d$ from \cite{saha2023branch}. 
For $d \geq 3$, the group $G_d \leq \text{Aut}(T)$ is generated by $A = \{a_1, a_2, \dots, a_d\}$, where $a_i$'s are given by their wreath recursions as per the following:
\begin{eqnarray*}
	a_{1}& = &(a_{1}, a_{2}, e, \dots, e) (1\ 2)\\
	a_{2}& = &(e, a_{2}, a_{3}, e, \dots , e)(2\ 3)\\
	&\vdots&\\
	a_{d-1}& = &(e, \dots , e, a_{d-1}, a_d)(d-1\ d)\\
	a_{d}& = &(a_{1}, e, \dots, e, a_{d})(d\ 1)
\end{eqnarray*}

\begin{theorem}[\cite{saha2023branch}]\label{exponent_sum_d}		
	Let $d$ be odd. Suppose $w$ is a word over $\tilde{A}$ representing the identity element of $G_d$. Then $|w|_p = 0$, for all $p \in A$.
\end{theorem}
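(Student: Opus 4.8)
The plan is to reformulate the statement as the existence of a homomorphism and then prove it by descending through the self-similar structure. Put $R=\mathbb Z[g]/(g^d-1)$ and, for a word $w$ over $\tilde A$, set $\Phi(w)=\sum_{i=1}^{d}|w|_{a_i}\,g^{i-1}\in R$. Since $1,g,\dots,g^{d-1}$ form a $\mathbb Z$-basis of $R$, the theorem is equivalent to the implication ``$w$ represents the identity of $G_d$ $\Longrightarrow$ $\Phi(w)=0$'', i.e.\ to there being a homomorphism $G_d\to R$ with $a_i\mapsto g^{i-1}$ (whence $G_d^{\mathrm{ab}}\cong\mathbb Z^d$). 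The first ingredient is a recursion valid for every word $w$ over $\tilde A$:
\[
\sum_{x\in X}\Phi(w|_x)=(1+g)\,\Phi(w),
\]
where $w|_x$ is the first-level section word obtained by the usual walk (maintain a current vertex $v$ starting at $x$; on reading $a_i^{\pm1}$ emit a section letter and toggle $v$ precisely when $v\in\{i,\overline{i+1}\}$). This is proved by bookkeeping: each letter of $w$ emits a section letter at exactly two first-level vertices, and a sign check shows it contributes $\epsilon$ to the $a_i$-exponent and $\epsilon$ to the $a_{\overline{i+1}}$-exponent of $\bigsqcup_x w|_x$, where $\epsilon=\pm1$ is the sign of the letter; summing over letters gives the identity, which iterates to $\sum_{|u|=k}\Phi(w|_u)=(1+g)^k\Phi(w)$.

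Oddness of $d$ enters through $(1+g)(1-g+g^2-\cdots+g^{d-1})=1+g^d=2$, which makes $1+g$ a non-zero-divisor in $R$. Now suppose the theorem fails and take a word $w$ over $\tilde A$ of minimal length that represents the identity with $\Phi(w)\ne 0$; free and cyclic reduction preserves $\Phi$ and the property of representing the identity and strictly shortens, so assume $w$ is freely and cyclically reduced. Every $w|_x$ again represents the identity, and since $1+g$ is a non-zero-divisor the recursion yields $\Phi(w|_{x_0})\ne 0$ for some $x_0$; minimality of $w$ then forces $|w|_{x_0}|=|w|$ with no cancellation in $w|_{x_0}$. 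A short case analysis of when consecutive section letters $c_kc_{k+1}$ cancel (always when the $k$-th letter of $w$ is negative and the $(k+1)$-st positive, and otherwise only when $w$ is non-reduced) shows this is impossible unless the signs along $w$ are all positive and then all negative; hence $w=uv^{-1}$ with $u,v$ positive words in $A$, with $u$ and $v$ representing the same element of $G_d$ and $\Phi(u)-\Phi(v)=\Phi(w)\ne 0$. Thus everything reduces to the positive cone.

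It remains to show that positive words $u,v$ over $A$ representing the same element of $G_d$ have the same letter multiset, which I would prove by induction on $|u|+|v|$ (the base case, that a nonempty positive word is not the identity, again falls out of the recursion together with $g$ being a unit in $R$). Canceling any common first or last letter decreases $|u|+|v|$ without changing $\Phi(u)-\Phi(v)$, so assume $u,v$ start and end with distinct letters. Sections of positive words are positive, so $|u|_x|\le|u|$ with equality for some $x$ only when the index sequence of $u$ is, up to stutters that the section washes out, a ``rotation run'' $a_p,a_{\overline{p+1}},a_{\overline{p+2}},\dots$ or its reverse, and such a $u$ has essentially one non-shrinking vertex, tied to its first letter and direction. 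Using this, the recursion $\sum_x(\Phi(u|_x)-\Phi(v|_x))=(1+g)(\Phi(u)-\Phi(v))$, and the facts that $1+g$ is a non-zero-divisor and $g$ a unit, one shows that if $\Phi(u)\ne\Phi(v)$ there is a first-level vertex $x$ with $\Phi(u|_x)\ne\Phi(v|_x)$ and $|u|_x|+|v|_x|<|u|+|v|$, contradicting the induction hypothesis. The delicate point --- and the real obstacle, stemming from $G_d$ being non-contracting so that sections need not shrink --- is precisely the rotation-run configurations: there one must exploit that after the prefix cancellation $u$ and $v$ have distinct non-shrinking vertices (so every vertex strictly shortens at least one side), and when those still coincide fall back on the non-zero-divisor property to produce a second inequality vertex. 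Alternatively, one may package the positive case as the assertion that the subsemigroup $\langle A\rangle$ of $G_d$ is a trace monoid (cf.\ \tref{trace}), whose defining relations $a_ia_j=a_ja_i$ manifestly preserve letter multisets.
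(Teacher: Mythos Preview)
This theorem is not proved in the present paper: it is quoted from the authors' earlier work \cite{saha2023branch}, so there is no in-paper argument to compare against.

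On its own merits, your approach is attractive and the first half is essentially complete. The identity $\sum_{x\in X}\Phi(w|_x)=(1+g)\Phi(w)$ is correct, and your observation that $(1+g)(1-g+\cdots+g^{d-1})=2$ in $R=\mathbb Z[g]/(g^d-1)$ (hence $1+g$ is a non-zero-divisor, since $R$ is $\mathbb Z$-torsion-free) is exactly where oddness of $d$ enters. Your cancellation analysis for a full-length section is also right: with the emission rules ``positive letter emits $a_{v_{k-1}}$, negative letter emits $a_{v_k}^{-1}$'', a negative--positive juncture always produces $a_{v_k}^{-1}a_{v_k}$, while a positive--negative juncture cancels only when $w$ itself is not reduced. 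One extra step you do not spell out: if $u$ and $v$ are both nonempty in $w=uv^{-1}$, then $w|_{x_0}$ begins with $a_{x_0}$ and ends with $a_{x_0}^{-1}$ (because $w=e$ forces $v_n=v_0=x_0$), so $w|_{x_0}$ fails to be \emph{cyclically} reduced and yields a strictly shorter counterexample. Thus a minimal counterexample is already a single positive (or negative) word.

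The genuine gap is the positive case, which you leave as a sketch. Your proposed fallback, invoking \tref{trace}, is circular here: the proof of \tref{trace} explicitly uses \tref{exponent_sum_d}. However, the gap closes cleanly along your own lines. If $w$ is a nonempty positive word of minimal length $n$ representing $e$, then for every $x$ either $w|_x$ is empty or $|w|_x|=n$ (a shorter nonempty positive section would itself be a counterexample). Since each letter of $w$ contributes to exactly two first-level sections, $\sum_x|w|_x|=2n$, so \emph{exactly two} vertices give full-length sections; your own walk argument then forces $w=a_i^{\,n}$. Now $n$ must be even (else $\lambda_w\ne 1$), and $a_i^{\,n}|_i=(a_ia_{\overline{i+1}})^{n/2}$ is again a positive word of length $n$ representing $e$, hence another minimal counterexample, hence of the form $a_j^{\,n}$ --- which it visibly is not. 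This finishes the proof without any further ``rotation-run'' bookkeeping.
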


\begin{theorem}[\cite{saha2023branch}]\label{weakly_branch}
	For odd values of $d$, the group $G_{d}$ is fractal and weakly regular branch over its commutator subgroup $G_d'$.
\end{theorem}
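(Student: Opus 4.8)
I would prove the two assertions in order, extracting level-transitivity along the way and then reducing the branch property to a finite computation with sections. Transitivity on $X^1$ is immediate: the images $\lambda_{a_i}$ include all adjacent transpositions $(1\,2),\dots,(d-1\ d)$, which generate $S_d$, so $G_d$ maps onto $S_d$ on $X^1$. Since $G_d$ is self-similar, each $\pi_u$ is a homomorphism whose image is a subgroup of $G_d$; hence to prove fractality it is enough — by the usual propagation from the first level to all vertices, which simultaneously yields transitivity on every level — to show $\pi_x(\text{St}_{G_d}(x))=G_d$ for each $x\in X$, and for that it suffices to realise every generator $a_j$ as a section at $x$ of an element of $\text{St}_{G_d}(x)$.

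\textbf{Fractality.} The plan is to do this with a small stock of explicitly computed elements: the squares $a_i^{2}=(e,\dots,e,\,a_ia_{\overline{i+1}},\,a_{\overline{i+1}}a_i,\,e,\dots,e)\in\text{St}_{G_d}(\widehat1)$ (nontrivial sections in positions $i$ and $\overline{i+1}$), and the ``full cycle'' $c:=a_1a_2\cdots a_d$, which stabilises the vertex $1$ and satisfies $c|_1=c$ (an analogous cyclic product works at every vertex). Taking products and conjugates of the $a_i^{2}$ by generators and combining them with $c$ and its powers, one peels off unwanted factors and recovers each $a_j$ as a section; the bookkeeping is elementary but must be organised carefully. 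Once fractality holds at the vertex level, one upgrades it to $\pi_x(\text{St}_{G_d}(\widehat1))=G_d$: given $u\in\text{St}_{G_d}(x)$ with $u|_x=g$, multiply $u$ by an element of $\langle a_k : k\neq \overline{x-1},x\rangle$ — these fix $x$, have trivial section at $x$, and act on $X\setminus\{x\}$ as the full symmetric group — to correct the first-level permutation without disturbing $g$.

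\textbf{Weakly regular branch over $G_d'$.} Here I would use the standard reduction: it suffices to show that for all $i,j$ the element with wreath recursion $(\,[a_i,a_j],\,e,\dots,e\,)$ lies in $\psi_1\big(G_d'\cap\text{St}_{G_d}(\widehat1)\big)$. Granting this, fractality (in the sharp form $\pi_1(\text{St}_{G_d}(\widehat1))=G_d$ just obtained) lets us conjugate this element by any $v\in\text{St}_{G_d}(\widehat1)$ with prescribed $v|_1=h$, conjugating its first coordinate by $h$ while leaving the others trivial; so $\psi_1(G_d'\cap\text{St}_{G_d}(\widehat1))$ contains $(\,w,\,e,\dots,e\,)$ for every $w$ in the normal closure of the $[a_i,a_j]$ in $G_d$, which is exactly $G_d'$ (modulo that normal closure the generators commute). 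Conjugating further by elements of $G_d$ permuting the first-level vertices and using $G_d'\trianglelefteq G_d$ moves $G_d'$ into every coordinate, and multiplying yields $\stackrel{d}{G_d'\times \cdots \times G_d'}\le\psi_1(G_d'\cap\text{St}_{G_d}(\widehat1))$. To produce the seed elements, I would compute commutators of the squares $a_k^{2}$: since $a_k^{2}$ and $a_{k+1}^{2}$ overlap in support only in position $\overline{k+1}$, one finds $[a_k^{2},a_{k+1}^{2}]$ has wreath recursion $(e,\dots,e,\,[a_{\overline{k+1}}a_k,\,a_{\overline{k+1}}a_{\overline{k+2}}],\,e,\dots,e)$ — a single nontrivial coordinate carrying a commutator, hence an element of $G_d'\cap\text{St}_{G_d}(\widehat1)$ of exactly the needed shape, with a slightly different entry; conjugating these single-coordinate elements (via fractality) and forming further commutators and products, and if necessary passing to second-level sections, one manufactures $(\,[a_i,a_j],\,e,\dots,e\,)$ for every pair.

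\textbf{Main obstacle.} The delicate point, and where I expect the real work, is the two ``sufficiency'' steps: that the explicit elements above (with all their $G_d$-conjugates) really do generate $G_d$ as sections, and really do account for all the seeds $(\,[a_i,a_j],\,e,\dots,e\,)$. Both amount to checking that enough of the commutator structure of $G_d$ is compatible with the self-similar recursion, and both are finite but intricate commutator calculations. The key structural input is \tref{exponent_sum_d}: it identifies $G_d^{\mathrm{ab}}$ with the free abelian group of rank $d$, so that $G_d'$ is precisely the kernel of the exponent-sum vector, which keeps the abelianisation under exact control and rules out degenerate collapses in these computations.
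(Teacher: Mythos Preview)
This theorem is not proved in the paper under review: it is quoted verbatim from the authors' earlier work \cite{saha2023branch} and carries no proof here, so there is nothing in the present paper to compare your proposal against. Your outline---first-level transitivity via the transpositions $\lambda_{a_i}$, fractality by realising each generator as a section using products of the squares $a_i^{2}$ and cyclic words, and then the weakly-regular-branch reduction to single-coordinate commutators via $[a_k^{2},a_{k+1}^{2}]$ together with conjugation by fractality---is the standard strategy for results of this type and is consistent with how such arguments are typically carried out; whether it matches the details of the original proof in \cite{saha2023branch} cannot be determined from this paper alone.
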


In this work, $d$ always refers to any odd integer of value at least 3. For $1 \le i \le d$, consider $\xi_i = [a_{i}, a_{\overline{i+1}}][a_{\overline{i+1}}, a_{\overline{i+2}}][a_{\overline{i+1}}^{2}, a_{\overline{i+2}}]^{-1} \in G_{d}$. Recall from \cite{saha2023branch} that  
\begin{equation}\label{h_i}
	\psi(\xi_{i}) = (e, \dots, e)(i\ \overline{i+2})(\overline{i+1}\ \overline{i+3}).
\end{equation}

\section{Growth of $G_d$}
\label{growth_Gd}

In this section, we show that the semigroup generated by the generating set $A$ is a trace monoid (free partially commutative monoid) by giving a presentation.  Accordingly, we observe that the group $G_d$ is of exponential word growth.

First we give the structure of the sections of words in $A^*$ through the following lemmas.  

\begin{lemma}\label{s1}
	For $w \in A^*$, for all $i \in \{1, \ldots, d\}$, $w|_i$ is either the empty word or of the form $a_iw'$, for some $w' \in A^*$.
\end{lemma}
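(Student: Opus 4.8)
The plan is to induct on the length $|w|$ of the word. As a preliminary observation, note that every first-level section of a generator lies in $A \cup \{e\}$ --- indeed, reading off the displayed wreath recursions, $a_j|_x \in \{e, a_j, a_{\overline{j+1}}\}$ for every $x \in X$ --- so iterating the rule $gh|_u = g|_u\,h|_{g(u)}$ from the preliminaries shows that $w|_v \in A^*$ for every $w \in A^*$ and every $v \in X$; thus the statement to be proved is really the sharper claim that this word is empty or begins with the letter $a_i$.

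For the base case $|w| = 0$, the empty word represents $e$, and $e|_i = e$ is the empty word. For the inductive step, write $w = a_j w''$ with $w'' \in A^*$ of length $|w| - 1$, so that $w|_i = a_j|_i \cdot w''|_{\lambda_{a_j}(i)}$, where $\lambda_{a_j} = (j\ \overline{j+1})$ is the root permutation of $a_j$. The wreath recursions give two facts: first, $a_j|_i = e$ unless $i \in \{j, \overline{j+1}\}$, in which case $a_j|_i = a_i$ (the section of $a_j$ at each of its two active vertices is the generator labelled by that vertex); second, $\lambda_{a_j}$ fixes every vertex outside $\{j, \overline{j+1}\}$. Hence if $i \notin \{j, \overline{j+1}\}$, then $a_j|_i = e$ and $\lambda_{a_j}(i) = i$, so $w|_i = w''|_i$ and we are done by the inductive hypothesis applied to $w''$; while if $i \in \{j, \overline{j+1}\}$, then $w|_i = a_i \cdot w''|_{\lambda_{a_j}(i)}$ is of the required form $a_i w'$ with $w' = w''|_{\lambda_{a_j}(i)} \in A^*$.

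I do not expect a genuine obstacle here: the lemma is a direct unwinding of the given wreath recursions. The only point needing a little care is the bookkeeping of the root permutations --- one must notice that a generator contributing a trivial section at $i$ also fixes $i$ at the root, so that the vertex $i$ ``stays put'' until the first generator that contributes a nontrivial section, and at that moment the contributed section is precisely $a_i$. Isolating this (the case $i \notin \{j, \overline{j+1}\}$ above) is exactly what keeps the induction to just these two cases.
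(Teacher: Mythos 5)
Your proof is correct and rests on the same key observation as the paper's: the only generators with a nontrivial section at the vertex $i$ are $a_{\overline{i-1}}$ and $a_i$, each contributing the section $a_i$ there, while every other generator both fixes $i$ and has trivial section at $i$. The paper packages this by splitting $w$ at the first occurrence of $a_{\overline{i-1}}$ or $a_i$ rather than by induction on $|w|$, but that is a purely cosmetic difference.
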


\begin{proof}
	Let $w \in A^*$ and $i \in \{1, \ldots, d\}$. For $j \in \{1, \ldots, d\} \setminus \{\overline{i-1}, i\}$, note that $a_j|_i = e$ and $a_j(i) = i$. Therefore, if $a_{\overline{i-1}}, a_i \nll w$, then $w|_i = e$. 
	
	Let $a_{\overline{i-1}} \ll w$ or $a_{i} \ll w$.
	Then, for $p \doteq a_{\overline{i-1}}$ or $a_i$, suppose $w$ is of the form $upv$, where $u, v \in A^*$ and $a_{\overline{i-1}}, a_i \nll u$. Note that $u|_i = e$ and $u(i) = i$. If $p = a_{\overline{i-1}}$, then $w|_i = u|_ia_{\overline{i-1}}|_{u(i)}v|_{ua_{\overline{i-1}}(i)} = a_iv|_{\overline{i-1}}$. Similarly, if $p = a_i$, then  $w|_i = a_iv|_{\overline{i+1}}$.
\end{proof}

\begin{remark}\label{empty_sec}
	For $w \in A^*$ and $1 \le i \le d$, if $w|_i$ is the empty word, then $a_{\overline{i-1}}, a_i \nll w$ and hence $\lambda_w(i) = i$.
\end{remark}

\begin{lemma}\label{s2}
	For $w \in A^*$ and $1 \le i \le d$, if $a_ja_l \ll w|_i$, then $l = \overline{j -1}$ or $\overline{j+1}$.
\end{lemma}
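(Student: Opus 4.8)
The plan is to induct on the length $|w|$, using \lref{s1} --- more precisely, the explicit computation carried out inside its proof --- as the engine. The base case $|w| \le 1$ requires nothing: then $w|_i$ has length at most one, hence contains no subword of the form $a_j a_l$.

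For the inductive step I would take $w$ with $|w| \ge 2$ and $w|_i \neq e$ (the empty case being vacuous) and mimic the proof of \lref{s1}: write $w = u p v$ with $p$ the first letter of $w$ lying in $\{a_{\overline{i-1}}, a_i\}$, so that $a_{\overline{i-1}}, a_i \nll u$. That computation yields $w|_i = a_i\,(v|_k)$, where $k = \overline{i-1}$ if $p = a_{\overline{i-1}}$ and $k = \overline{i+1}$ if $p = a_i$; the essential point is that in either case $k$ is a neighbour of $i$ modulo $d$. Since $p$ is a single letter, $|v| < |w|$, so the induction hypothesis applies to $v$ at the vertex $k$, telling us that every subword $a_j a_l$ of $v|_k$ satisfies $l \in \{\overline{j-1}, \overline{j+1}\}$.

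It then remains to classify an arbitrary occurrence of $a_j a_l$ inside $w|_i = a_i\,(v|_k)$: either it lies wholly within $v|_k$, in which case the induction hypothesis applies, or it consists of the leading letter $a_i$ together with the first letter of $v|_k$. In the latter case \lref{s1} tells us that this first letter is $a_k$, so the pair is $a_i a_k$ with $j = i$ and $l = k$ a neighbour of $i$ modulo $d$, i.e. $l \in \{\overline{j-1},\overline{j+1}\}$, as required. I do not expect a genuine obstacle: once \lref{s1} is in hand the argument is short. The only place demanding care is exactly this boundary pair --- the one formed where the leading $a_i$ of $w|_i$ meets the recursively produced tail $v|_k$ --- together with the bookkeeping that the shift $k$ arising at each step is always $i \pm 1$ modulo $d$, which is precisely the adjacency condition being propagated.
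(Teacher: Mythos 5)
Your proof is correct, but it takes a different route from the paper's. The paper argues directly, without induction: given an occurrence $w|_i = w_0 a_j a_l w_1$, it splits $w = uv$ so that $u|_i = w_0 a_j$ and $v|_{u(i)} = a_l w_1$, notes via \lref{s1} that $l$ must equal $u(i)$, and then pins down $u(i)$ by locating the letter of $u$ responsible for emitting the final $a_j$ (which must be $a_{\overline{j-1}}$ or $a_j$ acting at vertex $j$) and using \rref{empty_sec} to see that the trailing segment of $u$ no longer moves that vertex, forcing $u(i) \in \{\overline{j-1}, \overline{j+1}\}$. You instead induct on $|w|$, peeling off the \emph{first} letter that acts at coordinate $i$ to get $w|_i = a_i\,(v|_k)$ with $k = \overline{i \pm 1}$, and reduce everything to the single boundary pair $a_i a_k$, which \lref{s1} identifies. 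Both arguments ultimately rest on the same mechanism --- each emission into the section moves the tracked vertex to an adjacent one --- but your front-peeling induction avoids the paper's analysis of where inside $u$ the last emission occurs (and its appeal to \rref{empty_sec}), at the cost of having to carry the statement for all vertices $i$ simultaneously through the induction. Both are complete; yours is arguably the cleaner bookkeeping.
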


\begin{proof}
	Let $w|_i = w_0a_ja_lw_1$ for some $w_0, w_1 \in A^*$. Note that $w = uv$ such that $u|_i = w_0a_j$ and $v|_{u(i)} = a_lw_1$.  By Lemma \ref{s1}, it is sufficient to prove that $u(i) = \overline{j-1}$ or $\overline{j+1}$. 
	By wreath recursions of the elements of $A$, for $a_s \in A$ and $1 \le r \le d$, if $a_s|_r = a_j$, then $s \in \{\overline{j-1}, j\}$ and $r = j$. Hence, since $u|_i = w_0a_j$, $u$ is of the form $u_0pu_1$ such that $u_0|_i = w_0$, $u_0(i) = j$, $p = a_{\overline{j-1}}$ or $a_j$ and $u_{1}|_{p(j)}$ is the empty word. 
	In case $p = a_{\overline{j-1}}$, since $u_{1}|_{p(j)} = u_1|_{\overline{j-1}}$ is the empty word, by Remark \ref{empty_sec}, we have $u_1(\overline{j-1}) = \overline{j-1}$ and hence, $u(i) = \overline{j-1}$. Similarly, if $p = a_{j}$, since $u_{1}|_{p(j)} = u_1|_{\overline{j+1}}$ is the empty word, we have $u(i) = \overline{j+1}$.
\end{proof}

\begin{remark}\label{a_t}
	For $t \in \{1, \ldots, d\}$ and $u \in A^*$, if none of $a_{\overline{t-1}}, a_t$, and $a_{\overline{t+1}}$ appear in $u$, then it is evident from the wreath recursions of $a_i$'s that $a_t \nll u|_x$, $a_{\overline{t+1}} \nll u|_x$, for all $x \in \{1, \ldots, d\}$ and $\lambda_u$ fixes $t$ and $\overline{t + 1}$.
\end{remark}

\begin{theorem}\label{trace}
	The semigroup $S$ generated by $A$ is a trace monoid. In particular, $S$ has the following presentation 
	$$\langle a_1, \ldots, a_d : a_ia_j = a_j a_i, 1 < |j - i| < d - 1 \rangle .$$
\end{theorem}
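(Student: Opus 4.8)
The strategy is to compare $S$ directly with the trace monoid
$M = \langle a_1,\dots,a_d : a_ia_j=a_ja_i,\ 1<|j-i|<d-1\rangle$.
First I would check that the displayed relations hold in $S$. Reading indices cyclically, the generator $a_i$ is ``supported'' on the two coordinates $\{i,\overline{i+1}\}$: its top permutation is $(i\ \overline{i+1})$ and its only non-trivial first-level sections, $a_i$ and $a_{\overline{i+1}}$, sit at positions $i$ and $\overline{i+1}$. The inequality $1<|j-i|<d-1$ says exactly that $j\not\equiv i,i\pm1 \pmod d$, i.e.\ that $\{i,\overline{i+1}\}$ and $\{j,\overline{j+1}\}$ are disjoint; then $(i\ \overline{i+1})$ and $(j\ \overline{j+1})$ are disjoint transpositions, and a short computation using $gh|_x=g|_xh|_{g(x)}$ shows that both $a_ia_j$ and $a_ja_i$ have top permutation $(i\ \overline{i+1})(j\ \overline{j+1})$ and sections $a_i,a_{\overline{i+1}},a_j,a_{\overline{j+1}}$ at positions $i,\overline{i+1},j,\overline{j+1}$ (trivial elsewhere), so $a_ia_j=a_ja_i$ in $S$. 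Hence there is a surjective monoid homomorphism $\phi\colon M\twoheadrightarrow S$, $a_i\mapsto a_i$, and the theorem is the assertion that $\phi$ is injective.

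Since words over $A$ are equal in $S$ iff they are equal in $G_d$, injectivity of $\phi$ means: $w=_{G_d}w'$ implies $w\equiv_M w'$ for all $w,w'\in A^*$. Applying \tref{exponent_sum_d} to $w\overline{w'}$ gives $|w|_p=|w'|_p$ for every $p\in A$, so $|w|=|w'|=:n$ and the same generators occur in both; I would then induct on $n$, the cases $n\le 1$ being immediate. For the inductive step it suffices to show that $[w]_M$ and $[w']_M$ admit a common generator as a left divisor: from $w\equiv_M a_iu$ and $w'\equiv_M a_iu'$ one gets $a_iu=_{G_d}a_iu'$, hence $u=_{G_d}u'$ with $|u|=|u'|=n-1$, hence $u\equiv_M u'$ by the induction hypothesis, hence $w\equiv_M a_iu\equiv_M a_iu'\equiv_M w'$. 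As $[w]_M$ always has its first letter as a left divisor, the whole theorem reduces to the following.

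\emph{Key claim.} If $w=_{G_d}w'$ and the generator $a_{i_0}$ left-divides $[w]_M$ (equivalently: $a_{i_0}$ occurs in $w$ and no letter of $\{a_{\overline{i_0-1}},a_{\overline{i_0+1}}\}$ precedes its first occurrence), then $a_{i_0}$ left-divides $[w']_M$ as well. To prove it, assume not and write $w\equiv_M a_{i_0}\tilde u$ and $w'=z\,a_{\overline{i_0+\varepsilon}}\,r$, where $\varepsilon\in\{-1,1\}$, the prefix $z\in A^*$ contains none of $a_{\overline{i_0-1}},a_{i_0},a_{\overline{i_0+1}}$, and $a_{\overline{i_0+\varepsilon}}$ is the first letter of $w'$ lying in $\{a_{\overline{i_0-1}},a_{i_0},a_{\overline{i_0+1}}\}$ --- it cannot be $a_{i_0}$, else $a_{i_0}$ would left-divide $[w']_M$. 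By \rref{a_t} (with $t=i_0$), $\lambda_z$ fixes $i_0$ and $\overline{i_0+1}$ and neither $a_{i_0}$ nor $a_{\overline{i_0+1}}$ occurs in any $z|_x$, so by \lref{s1} the sections $z|_{i_0}$ and $z|_{\overline{i_0+1}}$ are empty. Computing the first-level sections of $a_{i_0}\tilde u$ and of $z\,a_{\overline{i_0+\varepsilon}}\,r$ at the coordinates $i_0$ and $\overline{i_0+1}$ (via $gh|_x=g|_xh|_{g(x)}$ and the explicit wreath recursions) yields identities between sections of $\tilde u$ and sections of $r$. Since $z\,a_{\overline{i_0+\varepsilon}}$ is non-empty these sections have length $<n$, so the induction hypothesis makes the two sides $M$-equivalent; being sections of words over $A$ they are rigid words (consecutive letters cyclically adjacent, by \lref{s2}), hence equal \emph{as words}. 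Their prescribed leading letters (\lref{s1}) then force certain of these sections to be empty, which by \rref{empty_sec} severely restricts which generators can occur in $\tilde u$ and in $r$; comparing exponent sums (\tref{exponent_sum_d}) and the induced permutations $\lambda_w=\lambda_{w'}$ now contradicts the choice of $a_{\overline{i_0+\varepsilon}}$. This proves the Key claim, hence the induction, hence that $\phi$ is an isomorphism and $S\cong M$ has the stated presentation.

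I expect the technical heart to be this section analysis inside the Key claim: making the case distinction on $\varepsilon$ and on the leading letters of the emerging sections precise, and checking that in every branch the exponent-sum and permutation comparisons really do close the argument for all odd $d\ge 3$. The reason the proof must be routed through left-divisibility rather than a naive ``compare all first-level sections and induct'' is that a section need not be shorter than the original word --- for instance $(a_ia_{\overline{i+1}})|_i=a_ia_{\overline{i+1}}$ --- so one has to peel off an honest generator, which does shrink every subsequent section, before descending.
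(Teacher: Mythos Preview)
Your proposal is correct and is essentially the paper's own argument run from the left end rather than the right: the paper takes a minimal counterexample $u=v$, writes $u=u_1a_t$, and uses the same section analysis (via \lref{s1}, \lref{s2}, \rref{empty_sec}, \rref{a_t}) at the vertices $\lambda_{u_1}^{-1}(t)$ and $\lambda_{u_1}^{-1}(\overline{t+1})$ to show that the corresponding section of $v$ cannot end in $a_{\overline{t+1}}$, then peels off the common first letter (from \lref{s1}) to obtain a strictly shorter counterexample. Your Key-claim endgame (forcing the sections $\tilde u|_{i_0}$, $r|_{\overline{i_0+2}}$ etc.\ to be empty via mismatched leading letters, then contradicting $|w'|_{a_{\overline{i_0+\varepsilon}}}\ge 1$ by \tref{exponent_sum_d}) is completable exactly as you anticipate; the paper's last-letter formulation gives a marginally more direct finish, but the two proofs are structurally identical.
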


\begin{proof}
	Clearly $a_ia_j = a_j a_i, 1 < |j - i| < d - 1$; we call these commuting relations as `the commutators' in this proof. For $u, v \in A^*$, if $u$ cannot be transformed to $v$ by applying the commutators, then we show that $u \neq v$ in $S$. On contrary, we assume such relations exist in $S$ and $u = v$ is such a relation with $|u| + |v|$ is minimum. Since at least one of $u$ and $v$ is nontrivial, without loss of generality say $|u| \ge 1$. Suppose $u$ is of the form $u_1a_t$, for $u_1 \in A^*$. Note that 
	\begin{eqnarray*}
		u|_{\lambda_{u_1}^{-1}(t)} &=& u_1|_{\lambda_{u_1}^{-1}(t)}a_{t}|_{\lambda_{u_1}(\lambda_{u_1}^{-1}(t))} = u_1|_{\lambda_{u_1}^{-1}(t)}a_{t}  \text{ and}\\
		u|_{\lambda_{u_1}^{-1}(\overline{t+1})} &=& u_1|_{\lambda_{u_1}^{-1}(\overline{t+1})}a_{t}|_{\lambda_{u_1}(\lambda_{u_1}^{-1}(\overline{t+1}))} = u_1|_{\lambda_{u_1}^{-1}(\overline{t+1})}a_{\overline{t+1}}.
	\end{eqnarray*}
    For all $p \in A$, since $|u|_p = |v|_p$ by Theorem \ref{exponent_sum_d}, note that $a_t \ll v$. Let $v  = v_0a_tv_1$, for $v_0, v_1 \in A^*$ such that $a_t \nll v_1$. If both $a_{t-1} \nll v_1$ and $a_{\overline{t+1}} \nll v_1$ then using the commutators $v$ can be transformed to $v_0v_1a_t$, which implies $v_0v_1 = u_1$ with $|v_0v_1| + |u_1| < |u| + |v|$, which is a contradiction. Thus, $a_{\overline{t-1}} \ll v_1$ or $a_{\overline{t+1}} \ll v_1$. 
    
    Case 1: $a_{\overline{t-1}}$ is last among $\{a_{\overline{t-1}}, a_t, a_{\overline{t+1}}\}$ in $v_1$ and therefore, $v$ is of the form $v_2a_{\overline{t-1}}v_3$, where $a_{\overline{t-1}}, a_t, a_{\overline{t+1}} \nll v_3$.
    Since $\lambda_{v} = \lambda_{u}$, we have $\lambda_{v_2a_{\overline{t-1}}v_3} = \lambda_{u_1a_t}$, and therefore, $\lambda_{v_2} = \lambda_{u_1}\ (t\; \overline{t+1}) \;\lambda_{v_3}^{-1}\;(\overline{t-1} \; t)$.
    By Remark \ref{a_t}, as $\lambda_{v_3}$ fixes $t$ and $\overline{t+1}$, $\lambda_{v_3}^{-1}$ commutes with $(t \; \overline{t+1})$. Thus,
    $\lambda_{v_2} = \lambda_{u_1} \lambda_{v_3}^{-1}\; (\overline{t-1} \; t \; \overline{t+1})$. Since $\lambda_{v_2}^{-1} = (\overline{t+1} \; t \; \overline{t-1})\lambda_{v_3}\lambda_{u_1}^{-1}$ and $\lambda_{v_3}$ fixes the vertex $\overline{t+1}$, observe that $\lambda_{v_2}^{-1}(\overline{t-1}) = \lambda_{u_1}^{-1}(\overline{t+1})$.
    Therefore, 
    \begin{eqnarray*}
    	v|_{\lambda_{u_1}^{-1}(\overline{t+1})} &=& v|_{\lambda_{v_2}^{-1}(\overline{t-1})}\\
    	&=& v_2|_{\lambda_{v_2}^{-1}(\overline{t-1})}a_{\overline{t-1}}|_{\lambda_{v_2}(\lambda_{v_2}^{-1}(\overline{t-1}))}v_3|_{\lambda_{a_{\overline{t-1}}}(\overline{t-1})}\\
    	& = & v_2|_{\lambda_{v_2}^{-1}(\overline{t-1})}a_{\overline{t-1}}v_3|_{t}.
    \end{eqnarray*}
    Since $a_t, a_{\overline{t+1}} \nll v_3|_{t}$ (by Remark \ref{a_t}), $v|_{\lambda_{u_1}^{-1}(\overline{t+1})}$ cannot be transformed to $v_4a_{\overline{t+1}}$, for $v_4 \in A^*$, by Lemma \ref{s2}. Hence, $u|_{\lambda_{u_1}^{-1}(\overline{t+1})}$ and $v|_{\lambda_{u_1}^{-1}(\overline{t+1})}$ are not identical and one cannot be transformed to other by using the commutators. Since $u|_{\lambda_{u_1}^{-1}(\overline{t+1})}$ and $v|_{\lambda_{u_1}^{-1}(\overline{t+1})}$ are nonempty words, by Lemma \ref{s1}, the word $u|_{\lambda_{u_1}^{-1}(\overline{t+1})}$ is of the form $a_{\lambda_{u_1}^{-1}(\overline{t+1})}u'$ and $v|_{\lambda_{u_1}^{-1}(\overline{t+1})}$ is of the form $a_{\lambda_{u_1}^{-1}(\overline{t+1})}v'$, for some $u', v' \in A^*$. Since $u|_{\lambda_{u_1}^{-1}(\overline{t+1})}$ and $v|_{\lambda_{u_1}^{-1}(\overline{t+1})}$ are not identical, at least one of $u'$ and $v'$ is nonempty. Since $u|_{\lambda_{u_1}^{-1}(\overline{t+1})} = v|_{\lambda_{u_1}^{-1}(\overline{t+1})}$, we have $u' = v'$ with $|u'| + |v'| < |u|_{\lambda_{u_1}^{-1}(\overline{t+1})}| + |v|_{\lambda_{u_1}^{-1}(\overline{t+1})}| \le |u|+|v|$, which contradicts the fact that $u = v$ is the relation such that $|u| + |v|$ is minimum.
    
    Case 2: Suppose $v = v_5a_{\overline{t+1}}v_6$, where $a_{\overline{t-1}}, a_t, a_{\overline{t+1}} \nll v_6$.
    Similar to Case 1, we can observe that 
    $v|_{\lambda_{u_1}^{-1}(t)} = v|_{\lambda_{v_5}^{-1}(\overline{t+2})} = v_5|_{\lambda_{v_5}^{-1}(\overline{t+2})}a_{\overline{t+2}}v_6|_{\overline{t+1}}$ and therefore, $u|_{\lambda_{u_1}^{-1}(t)}$ and $v|_{\lambda_{u_1}^{-1}(t)}$ are not identical and one cannot be transformed to other by using the commutators. By Lemma \ref{s1}, for some $u'', v'' \in A^*$, we have $u|_{\lambda_{u_1}^{-1}(t)} = a_{\lambda_{u_1}^{-1}(t)}u''$ and $v|_{\lambda_{u_1}^{-1}(t)} = a_{\lambda_{u_1}^{-1}(t)}v''$. Therefore, $u''$ cannot be transformed to $v''$ by using the commutators and $u'' = v''$ with $|u''| + |v''| <  |u|+|v|$, which is a contradiction.
\end{proof}

By Theorem \ref{trace}, it is clear that the subsemigroup generated by $a_1$ and $a_2$ is free and hence we have the following result.

\begin{corollary}
	The group $G_d$ is of exponential word growth.
\end{corollary}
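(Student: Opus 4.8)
The plan is to deduce exponential word growth of $G_d$ from the fact that it contains a free subsemigroup on two generators, which is exactly what Theorem \ref{trace} delivers. Concretely, by Theorem \ref{trace} the semigroup $S$ generated by $A$ has the trace-monoid presentation $\langle a_1, \ldots, a_d : a_ia_j = a_ja_i,\ 1 < |j-i| < d-1\rangle$, and since $d \ge 3$, the pair $\{1,2\}$ satisfies $|2-1| = 1 \not> 1$, so no defining relation of $S$ involves only $a_1$ and $a_2$. Hence the subsemigroup $\langle a_1, a_2 \rangle \le S$ is free: two words in $\{a_1,a_2\}^*$ that are equal in $S$ must be connected by a sequence of the commuting relations, but each such relation preserves the (cyclic-order-free) sequence of occurrences of $a_1$'s and $a_2$'s among themselves, so in fact they must be literally equal as words. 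Therefore $\{a_1,a_2\}^*$ embeds injectively into $G_d$.

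The next step is to convert this into a lower bound on the growth function of $G_d$ with respect to the generating set $A$ (or $\tilde A$). For $n \in \mathds{N}$, the $2^n$ distinct words of length $n$ in $\{a_1,a_2\}^*$ represent $2^n$ distinct elements of $G_d$, each of word length at most $n$. Thus the ball of radius $n$ in $G_d$ has at least $2^n$ elements, so the growth function $\gamma_{G_d}(n)$ satisfies $\gamma_{G_d}(n) \ge 2^n$, which means $G_d$ has exponential word growth. Since word growth type is a commensurability- and generating-set-independent invariant, the conclusion is intrinsic to $G_d$.

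I expect there to be essentially no obstacle here: the only mild point requiring care is the passage from "the subsemigroup of $S$ generated by $a_1,a_2$ is free" to "$\{a_1,a_2\}^*$ embeds in the \emph{group} $G_d$", but this is immediate because the natural map $A^* \to G_d$ factors through $S$, so if two positive words are unequal in $S$ they are a fortiori unequal in $G_d$; no further argument (and in particular no appeal to Theorem \ref{exponent_sum_d} beyond what already went into Theorem \ref{trace}) is needed. One could write the proof in a single sentence: by Theorem \ref{trace} the subsemigroup of $G_d$ generated by $a_1$ and $a_2$ is free of rank $2$, hence $\gamma_{G_d}(n) \ge 2^n$ and $G_d$ has exponential word growth.
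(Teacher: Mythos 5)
Your proposal is correct and follows exactly the paper's argument: Theorem \ref{trace} shows the subsemigroup generated by $a_1$ and $a_2$ is free (since $|2-1|=1$, no defining relation involves only these two letters, and the commuting relations cannot be applied inside a word over $\{a_1,a_2\}$), giving $2^n$ distinct elements in the ball of radius $n$ and hence exponential growth. The paper states this in one sentence; you have merely filled in the same details.
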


\section{Structural Properties}
\label{str_prop}

In this section, we give the structure of rigid level stabilizers of $G_d$ and accordingly prove that $G_d$ is not a regular branch group. We also characterize the elements of the level stabilizers of $G_d$ and observe that $G_d$ is a very strongly fractal group. Further, we establish the structure of the quotient ${\textup{St}_{G_d}(\widehat{k})}/{\textup{Rist}_{G_d}(\widehat{k})}$ and show that $G_d$ is a branch group. Unless it is required to distinguish, in the rest of the paper, (level) stabilizers and (level) rigid stabilizers in $G_d$ will be written without $G_d$ in the subscript. 

The following remarks on the elements of $G_d$ are useful in the sequel.

\begin{remark}\label{length_twice}
	For a word $w$ over $\tilde{A}$, let the wreath recursion of $g$ be $(g_1, g_2, \ldots, g_d)\lambda_g$. Then $|g_1g_2 \cdots g_d|_A = 2|g|_A$.
\end{remark}

\begin{remark}\label{odd_even}
	Note that the symmetric group $S_d$ is generated by $\{\lambda_{a_i} : 1 \le i \le d\}$. Therefore, for any odd (or even) permutation $\lambda$ in $S_d$, there exists $g \in G_d$ such that $\lambda_g = \lambda$ and $|g|_A$ is odd (or even, respectively).
\end{remark}

\begin{remark}\label{len_even_st}
	If $|g|$ is odd, then $\lambda_g$ is not identity so that $g \not\in \text{St}(\widehat{1})$. Thus, if $g \in \text{St}(\widehat{1})$, then $|g|$ is even and hence $|g|_A$ is even. 
\end{remark}

For $k \ge 1$, consider the subset $H_k = \{w \in G_d \; :\; |w|_{A} \equiv 0\ (\text{mod}\ 2^{k+1})\}$ of $G_d$. Note that $H_k$ is a normal subgroup of $G_d$. We show that $\psi_{k}({\textup{Rist}(\widehat{k})})$ is a product of $d^k$ copies of $H_k$. First, we prove the following lemma.

\begin{lemma}\label{lambda}
	For $g_1, \ldots, g_d \in G_d$, if $(g_1, \ldots, g_d) \in \psi(\textup{St}(\widehat{1}))$, then for any $\lambda \in S_d$, $(g_{\lambda(1)}, \ldots, g_{\lambda(d)}) \in \psi(\textup{St}(\widehat{1}))$.
\end{lemma}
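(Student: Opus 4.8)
The plan is to reduce the claim to the special case where $\lambda$ is a transposition of two adjacent coordinates, and then to realize the required permutation of the sections by conjugating inside $G_d$. First I would note that it suffices to prove the statement when $\lambda$ is a transposition $(x\ \overline{x+1})$ for some $x \in \{1,\dots,d\}$, since $S_d$ is generated by such adjacent transpositions (indeed, by $\{\lambda_{a_i}\}$, cf. Remark \ref{odd_even}), and the set of tuples $(g_{\lambda(1)},\dots,g_{\lambda(d)})$ obtained as $\lambda$ ranges over a generating set, applied repeatedly, gives all of $S_d$; more precisely, if the conclusion holds for $\lambda$ and for $\mu$ then it holds for $\lambda\mu$, because permuting the already-permuted tuple again lands back in $\psi(\textup{St}(\widehat 1))$. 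So the real content is: given $(g_1,\dots,g_d) \in \psi(\textup{St}(\widehat 1))$, show $(g_1,\dots,g_{x-1},g_{\overline{x+1}},g_x,g_{x+2},\dots,g_d) \in \psi(\textup{St}(\widehat 1))$ for each $x$.

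For the transposition case, the idea is to find an element $t \in G_d$ whose wreath recursion is $(e,\dots,e)\tau$ with $\tau$ the transposition $(x\ \overline{x+1})$ — or more generally an element that induces a first-level permutation swapping coordinates $x$ and $\overline{x+1}$ while contributing only trivial or harmless sections — and then conjugate: if $h \in \textup{St}(\widehat 1)$ with $\psi(h) = (g_1,\dots,g_d)$, then $\psi(t^{-1}ht) = (g_{\tau(1)},\dots,g_{\tau(d)})$ up to the sections of $t$, which cancel since $h$ and hence its coordinates... here one must be careful: conjugating by an element with nontrivial sections twists the $g_i$'s as well. The clean fix is to use the elements $\xi_i$ from \eqref{h_i}: we have $\psi(\xi_i) = (e,\dots,e)(i\ \overline{i+2})(\overline{i+1}\ \overline{i+3})$, a genuinely "section-free" element, and products of the $\xi_i$ generate a subgroup of $\textup{St}(\widehat 1)$ mapping onto some subgroup $P \le S_d$ of even permutations via $\lambda$, with trivial sections. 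Conjugating $h$ by such an element $r$ (with $\psi(r) = (e,\dots,e)\rho$) gives $\psi(r^{-1}hr) = (g_{\rho(1)},\dots,g_{\rho(d)})$ exactly, with no twisting, and $r^{-1}hr \in \textup{St}(\widehat 1)$. This handles every $\rho \in P$.

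The remaining gap is that $P$ consists only of even permutations (each $\xi_i$ is a product of two transpositions), whereas a single adjacent transposition $\tau = (x\ \overline{x+1})$ is odd. To cover odd $\lambda$ as well, I would instead argue: pick any $g \in G_d$ with $\lambda_g = \lambda$ (possible by Remark \ref{odd_even}); then $\psi(g) = (f_1,\dots,f_d)\lambda$ for some $f_i \in G_d$, and for $h \in \textup{St}(\widehat 1)$ with $\psi(h) = (g_1,\dots,g_d)$ we compute $\psi(g^{-1}hg) = \bigl(f_{\lambda(1)}^{-1} g_{\lambda(1)} f_{\lambda(1)},\ \dots,\ f_{\lambda(d)}^{-1} g_{\lambda(d)} f_{\lambda(d)}\bigr)$, which lies in $\psi(\textup{St}(\widehat 1))$. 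This is the correctly permuted tuple but each entry is conjugated by $f_{\lambda(i)} \in G_d$. So the statement as literally quoted — that $(g_{\lambda(1)},\dots,g_{\lambda(d)})$ itself lies in $\psi(\textup{St}(\widehat 1))$ — needs the extra input that $\psi(\textup{St}(\widehat 1))$ is closed under coordinatewise conjugation by elements of $G_d$, i.e. that it is a normal subgroup of $\prod^d G_d$, or at least invariant under the relevant twists. I expect this — reconciling the "no-twisting" $\xi_i$-conjugation route with full coverage of $S_d$ including odd permutations, and pinning down exactly which closure property of $\psi(\textup{St}(\widehat 1))$ is being used — to be the main obstacle; the likely resolution is that for $\lambda = \tau$ a transposition one uses a carefully chosen $g$ (e.g. a single generator or short word) whose sections $f_i$ are themselves in $G_d$, so that coordinatewise conjugation by $G_d$-elements preserves membership because $\psi(\textup{St}(\widehat 1))$ is visibly a $\prod^d G_d$-module under conjugation, and then the even/odd issue disappears entirely.
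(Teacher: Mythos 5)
Your proposal correctly identifies the right mechanism (conjugation by an element $h$ with $\lambda_h=\lambda$) and correctly computes that $\psi(hgh^{-1})$ is the permuted tuple with each coordinate twisted by a section of $h$. But you then leave the essential step open: you say the argument ``needs the extra input that $\psi(\textup{St}(\widehat 1))$ is closed under coordinatewise conjugation'' and only speculate about how to get it. Neither of your proposed resolutions works as stated: the $\xi_i$-route only produces even permutations (so adjacent transpositions, which are odd, are out of reach), and there is no reason to expect a generator with ``harmless'' sections --- every $a_i$ has two nontrivial sections, and $\psi(\textup{St}(\widehat 1))$ is not ``visibly'' a $\prod^d G_d$-submodule without an argument.

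The paper closes exactly this gap using Theorem \ref{weakly_branch}: $G_d$ is weakly regular branch over $G_d'$, so $G_d'\times\cdots\times G_d'\le\psi(G_d'\cap\textup{St}(\widehat 1))$. Writing $\psi(hgh^{-1})=(h|_1\,g_{\lambda(1)}\,h|_1^{-1},\ldots,h|_d\,g_{\lambda(d)}\,h|_d^{-1})$, one multiplies on the right by the tuple $(h|_1\,g_{\lambda(1)}^{-1}\,h|_1^{-1}\,g_{\lambda(1)},\ldots,h|_d\,g_{\lambda(d)}^{-1}\,h|_d^{-1}\,g_{\lambda(d)})$, whose coordinates are commutators and which therefore lies in $\psi(\textup{St}(\widehat 1))$ by the weakly branch property; the product is exactly $(g_{\lambda(1)},\ldots,g_{\lambda(d)})$. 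This single observation makes the even/odd case split and the reduction to transpositions unnecessary. As written, your proof is incomplete: the missing ingredient is precisely the appeal to the weakly regular branch structure over the commutator subgroup.
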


\begin{proof}
	Let $\psi{(g)} = (g_1, \ldots, g_d) \in \psi(\textup{St}(\widehat{1}))$. Since $\{\lambda_{a_1}, \ldots, \lambda_{a_d}\}$ generates $S_d$, for any $\lambda \in S_d$, there exists $h \in G_d$ such that $\psi(h) = (h|_1, \ldots, h|_d)\lambda$. Observe that 
	\begin{eqnarray*}
		hgh^{-1} & = & (h|_1g|_{\lambda(1)}h^{-1}|_{\lambda(1)}, \ldots, h|_dg|_{\lambda(d)}h^{-1}|_{\lambda(d)})\\
		& = & (h|_1g_{\lambda(1)}{h|_{\lambda^{-1}(\lambda(1))}}^{-1}, \ldots, h|_dg_{\lambda(d)}{h|_{\lambda^{-1}(\lambda(d))}}^{-1})\\
		& = & (h|_1g_{\lambda(1)}h|_1^{-1}, \ldots, h|_dg_{\lambda(d)}h|_d^{-1})
	\end{eqnarray*}
	By Theorem \ref{weakly_branch}, since $G_d' \times \ldots \times G_d' \le \psi(G_d' \cap \textup{St}(\widehat{1}))$, we have
	$$(h|_1{g_{\lambda(1)}}^{-1}{h|_1}^{-1}g_{\lambda(1)}, \ldots, h|_d{g_{\lambda(d)}}^{-1}{h|_d}^{-1}g_{\lambda(d)}) \in \psi(\textup{St}(\widehat{1})).$$ Hence, 
	$(g_{\lambda(1)}, \ldots, g_{\lambda(d)}) \in \psi(\textup{St}(\widehat{1}))$.
\end{proof}

The structure of $k$-th level rigid stabilizer of $G_3$ was established in \cite[Theorem 5.8]{saha2023branch}. We now extend the result for $G_d$ in the following theorem.  

\begin{theorem}\label{rist}
	For all $k \in \mathds{N}$, $\psi_k(\textup{Rist}(\widehat{k})) = \stackrel{d^k}{H_k \times \dots \times H_k}$.
\end{theorem}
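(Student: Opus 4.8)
The plan is to prove both inclusions by induction on $k$, the base case $k=1$ and the inductive step being structurally similar but with the recursion $\psi_k = \psi_1$ composed with $\psi_{k-1}$ on each coordinate. Let me think about the two directions.

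First, the easy direction $\psi_k(\textup{Rist}(\widehat{k})) \subseteq \stackrel{d^k}{H_k \times \dots \times H_k}$. It suffices to show that if $g \in \textup{Rist}(u)$ for a single vertex $u \in X^k$, then the $u$-coordinate of $\psi_k(g)$ lies in $H_k$ and all other coordinates are trivial (the latter is immediate from the definition of $\textup{Rist}(u)$). Write $g|_u = h$; since $g$ fixes everything off the subtree at $u$ and acts as $h$ on that subtree, and $g \in G_d$, I need $|h|_A \equiv 0 \pmod{2^{k+1}}$. I would prove this by tracking exponent sums along the branch $u$. The key mechanism is Remark \ref{length_twice}: passing from $g$ to a first-level section doubles the $A$-length, more precisely $|g_1 \cdots g_d|_A = 2|g|_A$ where $g_i = g|_i$; combined with Remark \ref{len_even_st}, $g \in \textup{St}(\widehat{1})$ forces $|g|_A$ even. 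Iterating: $g \in \textup{Rist}(u) \subseteq \textup{St}(\widehat{k})$, and peeling off levels one at a time — at each level the relevant element lies in $\textup{St}(\widehat{1})$ of its subtree (because $g$ fixes level $k \geq $ current level), so its length is even, and then the product of its $d$ sections has twice that length, of which only the one continuing along $u$ is nonzero. Carefully bookkeeping these $k$ successive ``even then double'' steps yields that $|g|_u|_A$ is divisible by $2^{k+1}$. I should be slightly careful that the intermediate sections $g|_{u_1 \cdots u_j}$ themselves lie in the relevant level stabilizer; this follows because $g$ fixes all of $X^k$ pointwise, so it fixes all of $X^{k-j}$ in the subtree below $u_1 \cdots u_j$.

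For the reverse inclusion $\stackrel{d^k}{H_k \times \dots \times H_k} \subseteq \psi_k(\textup{Rist}(\widehat{k}))$, by symmetry (Theorem \ref{weakly_branch} gives fractality, so all vertices at a given level are equivalent, and $\textup{Rist}(\widehat{k})$ is generated by the $\textup{Rist}(u)$) it suffices to realize a single tuple $(e, \dots, e, h, e, \dots, e)$ with $h \in H_k$ in the image of $\psi_k$, i.e., to produce an element of $G_d$ supported on the subtree at a given $u \in X^k$ with section $h$ there. I would do this by induction: it is enough to show that for any $h \in H_k$ and any $x \in X$, the tuple $(e,\dots,e,h,e,\dots,e)$ (with $h$ in position $x$) belongs to $\psi_1(\textup{Rist}(\widehat{1}))$ with the section being something in $H_{k-1}$ — wait, more precisely I need $H_k$-elements at level $k$ to be built from $H_{k-1}$-elements one level up. The cleanest route: show $(e,\dots,e,h,e,\dots,e) \in \psi(\textup{St}(\widehat 1))$ for $h \in H_k$ and the realizing element actually lies in $\textup{Rist}(x)$, and that $h \in H_k$ implies $h$ can be written (after the previous level's induction) as the section of such an element, using that the ``doubling'' map $H_{k-1} \to H_k$ on lengths is what governs the congruence $2^k \mid |h|_A \Rightarrow 2^{k+1} \mid (\text{doubled length})$. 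The tool making the base case work is likely the elements $\xi_i$ from \eqref{h_i}, which have trivial sections and realize products of transpositions, together with Lemma \ref{lambda} which lets us permute coordinates freely inside $\psi(\textup{St}(\widehat 1))$. Concretely, for $k=1$: given $h \in H_1$ (so $4 \mid |h|_A$), I want $(h, e, \dots, e) \in \psi(\textup{St}(\widehat 1))$; I'd express $h$ as a word, lift each generator $a_i$ appearing in $h$ to an element of $G_d$ whose first section is $a_i$ and whose remaining structure can be cancelled using commutator elements and the $\xi_i$'s, using that $4 \mid |h|_A$ to ensure all the ``leftover'' transposition-content and the parity obstruction from Theorem \ref{exponent_sum_d}/Remark \ref{len_even_st} vanish.

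The main obstacle is the reverse inclusion, specifically verifying that the length-divisibility condition $2^{k+1} \mid |h|_A$ is exactly sufficient — not merely necessary — to realize $(e,\dots,e,h,e,\dots,e)$ as a section tuple within $\textup{Rist}(\widehat k)$. The necessity side is a clean exponent-sum computation, but sufficiency requires actually constructing preimages, and the interaction between the permutation parts (which must cancel, controlled by Lemma \ref{lambda} and Remark \ref{odd_even}) and the section parts (whose lengths must land in the right residue class) is where the combinatorics is delicate. I would organize it as: prove $\psi(\textup{St}(\widehat 1)) \supseteq \{(g_1,\dots,g_d) : \text{each } g_i \in G_d, \ |g_1\cdots g_d|_A \equiv 0 \pmod{2},\ \text{and } \prod \lambda_{g_i} = e\}$ up to the corrections supplied by $G_d'\times\cdots\times G_d'$ and the $\xi_i$, then feed this into the induction, carrying the congruence modulus up by a factor of $2$ at each level via Remark \ref{length_twice}.
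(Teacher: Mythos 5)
Your forward inclusion is sound and is essentially the paper's mechanism: for $g\in\textup{Rist}(u)$ with $u\in X^k$, iterating Remark \ref{length_twice} together with Remark \ref{len_even_st} along the branch $u$ (so $k$ successive ``even, then double'' steps) gives $2^{k+1}\mid |g|_u|_A$. The reduction of the reverse inclusion to a single vertex is also fine; the paper carries it out via Lemma \ref{lambda}.

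The gap is in the reverse inclusion, which is the substantive half. First, the organizing lemma you propose, namely $\psi(\textup{St}(\widehat 1))\supseteq\{(g_1,\dots,g_d): |g_1\cdots g_d|_A\equiv 0\ (\textup{mod}\ 2),\ \prod\lambda_{g_i}=e\}$, is false: the tuple $(a_1^2,e,\dots,e)$ satisfies both conditions, yet any preimage $g$ would have $2|g|_A=|a_1^2|_A=2$, i.e.\ $|g|_A=1$, contradicting Remark \ref{len_even_st}. The correct divisibility is modulo $4$, and neither the corrections by $G_d'\times\cdots\times G_d'$ (whose elements have all exponent sums zero) nor by the $\xi_i$ (which have trivial sections) can repair a mod-$4$ defect; that mod-$4$ condition is exactly why the answer is $H_1$ and not its index-two overgroup. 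Second, even with the right congruence, you never actually produce elements of $G_d$ realizing $(h,e,\dots,e)$ for a general $h\in H_1$; ``lift each generator and cancel the leftover'' is precisely the step that needs to be executed. The paper does this by exhibiting concrete elements, e.g.\ $a_1^{-1}a_4^{-1}a_2^{-1}a_1a_3a_4\xi_2\xi_1=(a_2^{-1}a_4,e,\dots,e)$ and $(a_1^{a_2}\xi_1)^2=((a_1a_2)^2,e,\dots,e)$, which together with $G_d'\times\cdots\times G_d'\le\psi(G_d'\cap\textup{St}(\widehat 1))$ (Theorem \ref{weakly_branch}) put $a_i^{-1}a_j$, $a_i^2a_j^2$ and $a_i^4$ into the first-coordinate group $K$, and then by a factorization argument showing that every $w$ with $4\mid|w|_A$ is a product of these modulo $G_d'$. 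Without such explicit generators of $K$ and a verification that they exhaust $H_1$, the inclusion $H_k\times\cdots\times H_k\subseteq\psi_k(\textup{Rist}(\widehat k))$ is not established. (Your inductive step from level $k$ to $k+1$ via doubling of lengths is fine once the base case is in place.)
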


\begin{proof}
	In view of \cite[Theorem 5.8]{saha2023branch}, let $d \ge 5$. We prove the result by induction on $k$. 
	For $k = 1$, we show that $\psi(\text{Rist}(1)) = H_1 \times \{e\} \times \ldots \times \{e\}$. Hence, by Lemma \ref{lambda}, we get $\psi(\text{Rist}(\widehat{1})) = H_1 \times \cdots \times H_1$.
	
	Let $\psi(\text{Rist}(1)) = K \times \{e\} \times \ldots \times \{e\}$. We show that $K = H_1$. First note that $K$ is a subgroup of $G_d$. Observe that $a_1^{-1}a_4^{-1}a_2^{-1}a_1a_3a_4\xi_2\xi_1 = (a_2^{-1}a_4, e, \ldots, e)$ and $(a_1^{a_2}\xi_1)^{2} = ((a_1a_2)^{2}, e, \ldots, e)$. Thus, $a_2^{-1}a_4, (a_1a_2)^{2}\in K$. Since $G_d$ is weakly regular branch group over $G_{d}'$, we have $G_d' \le K$ so that $a_1^2a_2^2 \in K$. By symmetry, note that, for $1 \le i \le d$,
	$a_i^{-1}a_{\overline{i+2}}, a_i^2a_{\overline{i+1}}^2 \in K$. Accordingly, for $1 \le i, j \le d$, $a_i^{-1}a_j, a_i^2a_j^2 \in K$, and hence, $a_i^{4}$ is also in $K$.
	
	Let $w\in H_1$ and, for $1 \le i \le d$, $|w|_{a_i} = n_{i}$. Thus, $n_{1} + \ldots + n_{d} \equiv 0\; (\text{mod}\ 4)$. Observe that 
	\begin{eqnarray*}
		w &=& a_1^{n_{1}} \cdots a_d^{n_{d}}h_1,\ \text{for some}\ h_1 \in G_d'\\
		&=& a_1^{n_{1}} \cdots a_{d-1}^{n_{d-1}}a_d^{-n_{1} - \ldots - n_{d-1} + 4t}h_1,\ \text{for some}\;  t \in \mathds{Z}\\
		&=& (a_d^{-n_{1}}a_1^{n_{1}})(a_d^{-n_2}a_2^{n_{2}}) \cdots (a_d^{-n_{d-1}}a_{d-1}^{n_{d-1}})h_2,\\ 
		&& \qquad \qquad \text{for some}\ h_2 \in K, \text{as}\; G_d' \le K\; \text{and}\; a_d^4 \in K\\
		&=& (a_d^{-1}a_1)^{n_{1}} \cdots (a_d^{-1}a_{d-1})^{n_{d-1}}h_3,\ \text{for some}\ h_3 \in K.
	\end{eqnarray*}
	Hence, since $a_d^{-1}a_i$'s are in $K$, we have $w \in K$ so that $H_1 \subseteq K$.
	
	To show $K \subseteq H_1$, let $w \in K$ with $|w|_{a_i} = m_{i}$, for $1 \le i \le d$. If possible, suppose $w \notin H_1$. Then, $m_1 + \ldots + m_d = s + 4t$, for some $t \in \mathds{Z}$ and $0 < s \le 3$. Note that $w = a_1^{m_{1}} \cdots a_d^{m_{d}}g_1$ for some $g_1 \in G_d'$. Hence, $w = a_1^sg_2$, where  $g_2 = a_1^{4t}(a_d^{-m_d}a_{d-1}^{-m_{d-1}} \cdots a_2^{-m_2}a_1^{m_2 + \ldots + m_d})^{-1}g_1 \in K$, so that $a_1^s \in K$.  
 However, we show that $a_1^s \notin K$ for all $s = 1, 2, 3$, leading to a contradiction. If $a_1, a_1^3 \in K$, then there exist $g_1, g_2 \in G_d$ such that $g_1 = (a_1, e, \ldots, e)$, $g_2 = (a_1^3, e, \ldots, e)$. Then, by Remark \ref{length_twice}, $2|g_1|_A = 1$ and $2|g_2|_A = 3$; which are not possible. Further, if $a_1^2 \in K$, then for $h = (a_1^2, e, \ldots, e) \in G_d$, we have $|h|_A = 1$ by Remark \ref{length_twice}. But this is not possible by Remark \ref{len_even_st}, as $h \in \text{St}(\widehat{1})$.
	Hence, $K \subseteq H_1$ so that $\text{Rist}(1) = H_1 \times \{e\} \times \cdots \times \{e\}$.
	
	Suppose $\textup{Rist}(\widehat{k}) = \stackrel{d^k}{H_k \times \dots \times H_k}$, for some $k \in \mathds{N}$. Thus, $\textup{Rist}(\underset{\text{length } k}{11\ldots 1}) = H_k \times \{e\} \stackrel{d^k-1}{\times \dots \times }\{e\}$.
	We show that $\textup{Rist}(\underset{\text{length }k+1}{11\ldots 1}) = H_{k+1} \times \{e\} \stackrel{d^{k+1}-1}{\times \dots \times} \{e\}$. 	
	Let $g\in \textup{Rist}(\underset{\text{length }k+1}{11\ldots 1})$. Clearly, $g \in \textup{Rist}(\underset{\text{length }k}{11\ldots 1})$. Then there exist $g', g'' \in G_d$ such that $\psi_{k+1}(g) = (g', e, \ldots, e)$ and $\psi_{k}(g) = (g'', e, \ldots, e)$ with $\psi_1(g'') = (g', e, \ldots, e)$. Hence, by inductive hypothesis, we have $g'' \in H_k$ so that $|g''|_{A} \equiv 0\ (\textup{mod}\ 2^{k+1})$. Since $\psi(g'') = (g', e, \ldots, e)$, by Remark \ref{length_twice}, we have $|g'|_{A} = 2|g''|_{A}$ so that $|g'|_{A} \equiv 0\ (\textup{mod}\ 2^{k+2})$. Hence, $g' \in H_{k+1}$ so that $\textup{Rist}(\underset{\text{length } k+1}{11\ldots 1}) \subseteq H_{k+1} \times \{e\} \stackrel{d^{k+1}-1}{\times \dots \times} \{e\}$. 
	
	Conversely, suppose $h \in H_{k+1}$. Clearly, $h \in H_1$ so that there exists $h' \in \textup{Rist}(1)$ such that $\psi(h') = (h, e, \ldots, e)$. Since $|h|_{A} \equiv 0\ (\textup{mod}\ 2^{k+2})$, again by Remark \ref{length_twice}, $|h'|_{A} \equiv 0\ (\textup{mod}\ 2^{k+1})$, i.e., $h' \in H_k$. Therefore, by inductive hypothesis, there exists $g \in \textup{Rist}(\underset{\text{length } k}{11\ldots 1})$ such that $\psi_k(g) = (h', e, \ldots, e)$ and thus $\psi_{k+1}(g) = (h, e, \ldots, e)$. Hence, $\textup{Rist}(\underset{\text{length } k+1}{11\ldots 1}) = H_{k+1} \times \stackrel{d^{k+1}-1}{ \{e\} \times \dots \times \{e\}}$. Consequently, by Lemma \ref{lambda}, we have $\textup{Rist}(\widehat{k+1}) = \stackrel{d^{k+1}}{H_{k+1} \times \dots \times H_{k+1}}$.
\end{proof}

To establish that $G_d$ is not a regular branch group, we recall the following property of regular branch groups.

\begin{proposition}[\cite{francoeur2024structure}]\label{prop_12}
	Let $G \le \textup{Aut}(T)$ be a regular branch group over a finite index subgroup $K \le G$. Then there exist $k_0 \in \mathds{N}$ and a finite index normal subgroup $L$ of $G$ such that $K \le L$ and $\pi_v(\textup{Rist}_G(v)) = L$, for all $v \in X^k$ and $k \ge k_0$.
\end{proposition}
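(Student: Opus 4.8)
The plan is to study the subgroups $N_v := \pi_v(\textup{Rist}_G(v)) \le G$ attached to the vertices $v$, show they are trapped between $K$ and $G$, establish a monotonicity relation forcing them to stabilise, and finally use level‑transitivity together with the self‑replicating nature of regular branch groups to identify the common eventual value with a single finite‑index normal subgroup $L$.

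First I would record that $K \le N_v \le G$ for every vertex $v$. The inclusion $N_v \le G$ is immediate from self‑similarity, since $g|_v \in G$ for each $g \in \textup{Rist}_G(v) \le \textup{St}_G(v)$. For $K \le N_v$, note that $G$ being weakly regular branch over $K$ forces $K$ to be weakly regular branch over itself (as $K \cap \textup{St}_G(\widehat{1}) = \textup{St}_K(\widehat{1})$); iterating $\stackrel{d}{K \times \cdots \times K} \le \psi(K \cap \textup{St}_G(\widehat{1}))$ then produces, for each $v \in X^{k}$, a subgroup of $\textup{Rist}_G(v)$ mapped isomorphically onto $K$ by $\pi_v$ (take the element whose $v$-section is a prescribed $\kappa \in K$ and all of whose other level-$k$ sections are trivial). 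Thus each $N_v$ is a subgroup of $G$ of index at most $[G:K]<\infty$, and only finitely many subgroups occur as $v$ varies.

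The technical heart is the relation $N_{vw} \subseteq N_w$ for all vertices $v, w$ — deleting a prefix can only enlarge $N$. I would prove the sharper identity $\pi_v(\textup{Rist}_G(vw)) = N_v \cap \textup{Rist}_G(w)$ by inspecting supports of sections: if $g \in \textup{Rist}_G(vw)$ then $g$ fixes $v$ and its section $g|_v$ fixes every vertex not lying below $w$, whence $g|_v \in N_v \cap \textup{Rist}_G(w)$; conversely, if $h \in N_v \cap \textup{Rist}_G(w)$ then any $g \in \textup{Rist}_G(v)$ with $g|_v = h$ already lies in $\textup{Rist}_G(vw)$. Composing with $\pi_w$ and using $\pi_w(\textup{Rist}_G(w)) = N_w$ gives $N_{vw} = \pi_w(N_v \cap \textup{Rist}_G(w)) \subseteq N_w$. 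In particular the subgroups $G \ge N_{1} \ge N_{11} \ge \cdots$ form a descending chain of finite‑index subgroups of $G$, all containing $K$, so the indices $[G:N_{1^k}]$ are non‑decreasing and bounded by $[G:K]$; hence the chain is eventually constant, say $N_{1^k} = L$ for all $k \ge k_0$, and then $K \le L$ and $[G:L] < \infty$.

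It remains to upgrade this to all vertices of level $\ge k_0$ and to obtain normality of $L$. For $v \in X^{k}$, transitivity on $X^{k}$ gives $g \in G$ with $g(v) = 1^{k}$; conjugation by $g$ carries $\textup{Rist}_G(1^{k})$ onto $\textup{Rist}_G(v)$, and computing the wreath recursion of $g h g^{-1}$ shows its $v$-section to be $(g|_v)(h|_{1^{k}})(g|_v)^{-1}$, so $N_v = (g|_v)\,N_{1^{k}}\,(g|_v)^{-1}$ is a $G$-conjugate of $N_{1^{k}}$. On the other hand, conjugating inside $\textup{St}_G(v)$ and invoking the self‑replicating property of regular branch groups, $\pi_v(\textup{St}_G(v)) = G$, shows that $N_v$ is normalised by all of $G$, i.e. $N_v \trianglelefteq G$. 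A normal subgroup has no other conjugate, so $N_v = N_{1^{k}} = L$ whenever $k \ge k_0$, and taking $v = 1^{k_0}$ also yields $L \trianglelefteq G$. I expect this last step to be the main obstacle: besides the fiddly section bookkeeping for conjugates, it is essential that the local action $\pi_v(\textup{St}_G(v))$ be all of $G$ — without the self‑replicating property one would only conclude that the eventual values of $N_v$ form a single conjugacy class of finite‑index subgroups, which falls short of the stated uniform conclusion.
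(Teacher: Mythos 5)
The paper offers no proof of this proposition --- it is imported verbatim from \cite{francoeur2024structure} --- so there is nothing in-house to compare your argument against; I am judging it on its own. Up to the final step it is sound and well organized: the sandwich $K \le N_v \le G$ obtained by iterating the branching relation, the identity $\pi_v(\textup{Rist}_G(vw)) = N_v \cap \textup{Rist}_G(w)$ and the resulting monotonicity $N_{vw} \le N_w$, the stabilization of the descending chain $N_{1^k}$ among the finitely many subgroups between $K$ and $G$, and the conjugacy $N_{g(z)} = (g|_z)^{-1} N_z (g|_z)$ are all correct and are the natural skeleton for this statement.

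The gap is in the last step, where you ``invoke the self-replicating property of regular branch groups, $\pi_v(\textup{St}_G(v)) = G$.'' Being regular branch does not imply being self-replicating (fractal): the branching relation only gives $K \le \pi_v(\textup{St}_G(v))$, so the local action is a finite-index subgroup containing $K$ that may well be proper in $G$; self-replication is consistently treated in the literature as an additional hypothesis, not a consequence of regular branching. Without it, your computation shows only that each $N_v$ is normalized by $\pi_v(\textup{St}_G(v))$ and that the eventual values $\{N_w : w \in X^{k_0}\}$ form a single conjugacy class of finite-index subgroups containing $K$ --- precisely the shortfall you concede in your closing sentence, but which you then paper over by asserting a property that is not part of the hypotheses. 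Closing this gap for an arbitrary regular branch group requires a genuinely different mechanism for normality and uniformity, which is presumably what \cite{francoeur2024structure} supplies. It is worth noting that for the single use made of the proposition in this paper --- ruling out regular branching of $G_d$, a group independently shown to be very strongly fractal --- the weaker, self-replicating version that your argument does establish would in fact suffice.
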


For $k \in \mathds{N}$, by Theorem \ref{rist}, note that $\pi_v(\textup{Rist}(v)) = H_k$ for all $v \in X^k$. Since $H_k$'s are distinct for different $k$'s, in view of Proposition \ref{prop_12}, we have the following corollary of Theorem \ref{rist}.

\begin{corollary}
	The group $G_d$ is not a regular branch group.
\end{corollary}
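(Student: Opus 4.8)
The plan is to argue by contradiction via \pref{prop_12}. Suppose $G_d$ were regular branch over some finite index subgroup $K \le G_d$. Then \pref{prop_12} supplies an integer $k_0 \in \mathds{N}$ and a single fixed finite index normal subgroup $L \trianglelefteq G_d$ such that $\pi_v(\textup{Rist}(v)) = L$ for every vertex $v \in X^k$ and every level $k \ge k_0$. The goal is to contradict the fact that this $L$ cannot simultaneously coincide with $\pi_v(\textup{Rist}(v))$ at all high levels, because those images grow strictly smaller as $k$ increases.

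To compute $\pi_v(\textup{Rist}(v))$ I would read off \tref{rist}. For a single vertex $v \in X^k$, the image $\psi_k(\textup{Rist}(v))$ lies inside $\psi_k(\textup{Rist}(\widehat{k})) = H_k \times \dots \times H_k$ and is supported only on the $v$-coordinate; exactly as in the proof of \tref{rist} (the case $v = 11\ldots 1$, together with the conjugation/symmetry argument used there and in \lref{lambda}), it equals the single factor $\{e\}\times \dots \times H_k \times \dots \times \{e\}$. Hence $\pi_v(\textup{Rist}(v)) = H_k$ for all $v \in X^k$. Combined with the previous paragraph this forces $H_k = L$ for every $k \ge k_0$, and in particular $H_{k_0} = H_{k_0+1}$.

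It then remains to observe that the chain $H_1 \supseteq H_2 \supseteq \dots$ is strictly decreasing. By \tref{exponent_sum_d} the exponent sums $|\cdot|_p$, and therefore $|\cdot|_A$, are well-defined invariants of elements of $G_d$ (two words representing the same element differ by a word representing the identity), so the element $a_1^{2^{k+1}}$ satisfies $|a_1^{2^{k+1}}|_A = 2^{k+1}$, which is divisible by $2^{k+1}$ but not by $2^{k+2}$. Thus $a_1^{2^{k+1}} \in H_k \setminus H_{k+1}$, so $H_k \neq H_{k+1}$ for every $k$, contradicting $H_{k_0} = H_{k_0+1}$. Hence $G_d$ is not a regular branch group. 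The only point requiring a little care is the identification $\pi_v(\textup{Rist}(v)) = H_k$, namely that the rigid stabilizer of a single vertex maps onto precisely one factor $H_k$ rather than something larger; this is already contained in the proof of \tref{rist}, and the remainder is routine bookkeeping.
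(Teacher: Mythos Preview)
Your proposal is correct and follows essentially the same route as the paper: apply \pref{prop_12}, identify $\pi_v(\textup{Rist}(v)) = H_k$ from \tref{rist}, and observe the $H_k$ are pairwise distinct. The paper states these three points in a single sentence before the corollary, whereas you spell out an explicit witness $a_1^{2^{k+1}} \in H_k \setminus H_{k+1}$ (using \tref{exponent_sum_d} to make $|\cdot|_A$ well defined on group elements) and note that the single-vertex identification comes from the $v = 1\cdots 1$ case in the proof of \tref{rist} together with conjugation; both additions are correct and merely elaborate the paper's argument.
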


We now proceed to characterize the elements of $k$-th level stabilizers of $G_d$. For that, we need the following lemma.

\begin{lemma}\label{lem_h_i}
	For any $h \in G_d$ and for $1 \le j < l \le d$, the element
	$$(e, \ldots, e, \underset{j}{h}, e, \ldots, e, \underset{l}{\overline{h}}^{^{-1}}, e, \ldots, e) \in \textup{St}(\widehat{1}),$$ where $\overline{h}$ is the reverse word of $h$.
\end{lemma}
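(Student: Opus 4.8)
The plan is to reduce, via \lref{lambda}, to the case $j=1,\ l=2$ and then to prove that
$L:=\{\,h\in G_d : (h,\overline h^{-1},e,\dots,e)\in\textup{St}(\widehat 1)\,\}$
is all of $G_d$; since by \lref{lambda} the defining condition of $L$ is insensitive to which two coordinates one uses, this suffices. The first observation is that $L$ is a subgroup. Indeed, for $h,h'\in L$ the two tuples $(h,\overline h^{-1},e,\dots)$ and $(h',\overline{h'}^{-1},e,\dots)$ carry no root permutation, so their product is $(hh',\,\overline h^{-1}\overline{h'}^{-1},e,\dots)$, and $\overline h^{-1}\overline{h'}^{-1}=(\overline{h'}\,\overline h)^{-1}=\overline{hh'}^{-1}$, whence $hh'\in L$; similarly $(h,\overline h^{-1},e,\dots)^{-1}=(h^{-1},\overline h,e,\dots)$ and $\overline h=(\overline{h^{-1}})^{-1}$, so $h^{-1}\in L$.

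The next step is to show that $L$ contains the index-two subgroup $H_0:=\ker\bigl(G_d\to\mathds{Z}/2\mathds{Z},\ g\mapsto|g|_A \bmod 2\bigr)$, this homomorphism being well defined by \tref{exponent_sum_d}. From the wreath recursions one reads off that $a_i^2\in\textup{St}(\widehat 1)$ has $a_ia_{\overline{i+1}}$ in coordinate $i$, $a_{\overline{i+1}}a_i=\overline{a_ia_{\overline{i+1}}}$ in coordinate $\overline{i+1}$, and $e$ elsewhere; by \lref{lambda} we may move these to coordinates $1$ and $2$. Since $(a_i^{-1}a_{\overline{i+1}}^{-1})^2$ has $A$-exponent sum $-4\equiv 0\ (\textup{mod }4)$ it belongs to $H_1$, so by \tref{rist} the tuple having $(a_i^{-1}a_{\overline{i+1}}^{-1})^2$ in coordinate $2$ and $e$ elsewhere lies in $\textup{Rist}(\widehat 1)\le\textup{St}(\widehat 1)$; multiplying it onto the previous tuple turns the second coordinate into $\overline{a_ia_{\overline{i+1}}}^{-1}$, so $a_ia_{\overline{i+1}}\in L$. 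The same argument applied to $a_{\overline{i-1}}^2$ gives $a_ia_{\overline{i-1}}\in L$. Because $L$ is a subgroup, routine manipulations now yield $a_i^{\varepsilon}a_j^{\delta}\in L$ for all $i,j$ and all signs $\varepsilon,\delta$ (for instance $(a_ia_{\overline{i+1}})(a_{\overline{i+2}}a_{\overline{i+1}})^{-1}=a_ia_{\overline{i+2}}^{-1}$, iterated using that $2$ is invertible modulo the odd number $d$, then recombined with the two-letter elements already in $L$ and with the squares $a_j^2$). Finally, any $g\in H_0$ is represented by a word over $\tilde A$ of even length, which factors into such two-letter blocks, so $H_0=\langle a_i^{\varepsilon}a_j^{\delta}\rangle\subseteq L$.

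It remains only to produce a single element of $L$ whose $A$-exponent sum is odd: since $[G_d:H_0]=2$ this forces $L\supsetneq H_0$, hence $L=G_d$, which after undoing the reduction proves the lemma. For this I would exhibit an explicit element of $\textup{St}(\widehat 1)$ built from commutators. When $d=3$ one has $[a_1,a_2]=(a_2^{-1},a_2,e)(1\ 2\ 3)$ from the wreath recursions, and hence $g=[a_1,a_2][a_2,a_3][a_3,a_1]\in\textup{St}(\widehat 1)$ with $g=(a_2^{-1}a_3^{-1}a_1^{-1},\,a_2a_3a_1,\,e)$; since $a_2a_3a_1=\overline{a_2^{-1}a_3^{-1}a_1^{-1}}^{-1}$ this is $(h,\overline h^{-1},e)$ with $h=a_2^{-1}a_3^{-1}a_1^{-1}$, and $|h|_A=-3$ is odd. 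For general odd $d$ I would run the same scheme with the cyclic product $\prod_{i=1}^{d}[a_i,a_{\overline{i+1}}]$: from $[a_i,a_{\overline{i+1}}]$ having $a_{\overline{i+1}}^{-1}$ in coordinate $i$, $a_{\overline{i+1}}$ in coordinate $\overline{i+1}$, $e$ elsewhere, and root permutation $(i\ \overline{i+1}\ \overline{i+2})$, one computes the root permutation of the product, raises it to that permutation's order so that it lands in $\textup{St}(\widehat 1)$, and corrects the coordinates other than $1$ and $2$ by $H_1$-elements exactly as above, arriving at an $h\in L$ with $|h|_A$ odd. This last construction is the step I expect to be the main obstacle: the permutation $\prod_{i=1}^{d}(i\ \overline{i+1}\ \overline{i+2})$ is trivial only when $d=3$ (it is already a $3$-cycle when $d=5$), so for larger $d$ one must genuinely pass to a power and then clear the extra coordinates, and keeping the resulting tuple in the precise shape $(h,\overline h^{-1},e,\dots,e)$ requires the careful bookkeeping with \tref{rist} indicated above.
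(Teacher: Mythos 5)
Your overall strategy is coherent and genuinely different from the paper's: you reduce via \lref{lambda} to the coordinates $(1,2)$, show that $L=\{h : (h,\overline{h}^{-1},e,\dots,e)\in\textup{St}(\widehat{1})\}$ is a subgroup containing the even-exponent-sum subgroup $H_0$, and then try to conclude by an index-$2$ argument. The even part essentially works, with two small repairs: the squares $a_j^2$ are not among the two-letter elements you first establish, but they do follow from the subgroup property (e.g.\ $a_j^2=(a_ja_p)(a_p^{-1}a_j)$ once all $a_pa_q$ and $a_p^{-1}a_q$ are in $L$); and since $\overline{h}$ depends on the chosen word for $h$, you should either work with words throughout or note, using \tref{exponent_sum_d} and \tref{rist}, that changing the word alters the second coordinate by an element of exponent sum $0$, hence of $H_1$, so membership in $L$ is well defined on group elements (this is needed for the index-$2$ step). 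Your use of \tref{rist} here is legitimate, as it is proved before the lemma.

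The genuine gap is the one you flag yourself: for $d\ge 5$ you never produce an element $h$ of odd exponent sum with $(h,\overline{h}^{-1},e,\dots,e)\in\textup{St}(\widehat{1})$, and without it the conclusion is only $H_0\subseteq L$, not $L=G_d$. Your $d=3$ element does not generalize, and the proposed repair --- raise $\prod_{i=1}^{d}[a_i,a_{\overline{i+1}}]$ to the order of its root permutation and then ``clear'' coordinates $3,\dots,d$ --- is not carried out and is not obviously available: \tref{rist} only lets you clear a coordinate whose entry has exponent sum divisible by $4$, which the sections of that power need not satisfy, and nothing in the sketch forces the two surviving coordinates into the reverse-inverse relation. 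This missing step is precisely where the paper does its real work: from the $\xi_i$ of Equation (\ref{h_i}) it assembles elements $\eta_i\in G_d$ with trivial first-level sections and root permutation $(i\ \overline{i+1}\ \overline{i+2})$, so that $[a_i,a_{\overline{i+1}}]\eta_i^{-1}=(e,\dots,e,a_{\overline{i+1}}^{-1},a_{\overline{i+1}},e,\dots,e)\in\textup{St}(\widehat{1})$; together with \lref{lambda} this already settles the single-letter (odd) case, and the lemma then follows by multiplying such tuples letter by letter, with no need for the parity machinery. So what remains missing in your proposal is not routine bookkeeping but an analogue of the $\eta_i$ construction, i.e.\ the crux of the paper's proof.
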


\begin{proof}
	From Equation (\ref{h_i}), recall that, for $i \in \{1, \ldots, d\}$, 
	   $$\xi_{i} = (e, \ldots, e)(i\ \overline{i+2})(\overline{i+1}\ \overline{i+3}).$$
	Accordingly, note that 
	\begin{eqnarray*}
		\lambda_{\xi_{d-2}\xi_{d-4} \cdots \xi_{3}} & = & (3 \ 5 \ \ldots \ d-2 \ d)(1 \ 4 \ 6 \ \ldots \ d-3 \ d-1);\\
		\lambda_{\xi_{2}\xi_{4} \cdots \xi_{d-1}} & = & (1 \ d-1 \ d-3 \ \ldots \ 6 \ 4 \ d \ d-2 \ \ldots \ 5 \ 3 \ 2).
	\end{eqnarray*}
    Now, for $\eta_1 = \xi_{d-2}\xi_{d-4} \cdots \xi_{3}\xi_{2}\xi_{4} \cdots \xi_{d-1}\xi_{d}$, we have $\lambda_{\eta_1} = (1 \ 2 \ 3)$ and hence, $\eta_1 = (e, \ldots, e)(1\ 2 \ 3)$.
    By symmetry, there exist $\eta_2, \ldots, \eta_d \in G_d$ such that $\eta_i = (e, \dots, e)(i\; \overline{i+1}\; \overline{i+2})$, for $i \in \{2, \ldots, d\}$.
    For $i \in \{1, \ldots, d\}$, observe that $$[a_i, a_{\overline{i+1}}] = (e, \ldots, e, \underset{i}{a_{\overline{i+1}}^{-1}}, \underset{\overline{i+1}}{a_{\overline{i+1}}}, e, \ldots, e) (i\; \overline{i+1}\; \overline{i+2}).$$ Thus, $$[a_i, a_{\overline{i+1}}]\eta_i^{-1} = (e, \ldots, e, \underset{i}{a_{\overline{i+1}}^{-1}}, \underset{\overline{i+1}}{a_{\overline{i+1}}}, e, \ldots, e) \in \textup{St}(\widehat{1}).$$ By Lemma \ref{lambda}, for $1 \le i \le d$ and $1 \le j < l \le d$, $(e, \ldots, e, \underset{j}{a_{i}}, e, \ldots, e, \underset{l}{a_{i}^{-1}}, e, \ldots, e) \in \textup{St}(\widehat{1})$. Therefore, for any $h \in G_d$, since $h$ is a word over $a_i$'s and their inverses, we have the result.
\end{proof}

\begin{theorem}\label{stab}
	For $h_1, h_2, \ldots, h_{d^k} \in G_d$, an element $(h_1, h_2, \ldots, h_{d^k}) \in \textup{St}(\widehat{k})$ if and only if for $1\le r\le k$ and $0 \le t < d^{k - r}$, 
	$$\sum_{i = 1}^{d^r}|h_{d^rt + i}|_A \equiv 0 \ (\textup{mod}\ 2^{r+1}).$$
\end{theorem}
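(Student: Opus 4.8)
The plan is to prove both directions by induction on $k$, using the embedding $\psi_1$ and the length-doubling identity from Remark \ref{length_twice} as the engine. For the base case $k=1$: an element $g=(h_1,\dots,h_d)$ lies in $\textup{St}(\widehat 1)$ iff it actually belongs to $G_d$ with trivial top permutation, so I must characterize when a $d$-tuple of elements of $G_d$ is realized by some $g\in G_d$ fixing the first level. One inclusion is immediate: if $(h_1,\dots,h_d)\in\psi(\textup{St}(\widehat 1))$ then, writing $g$ for the preimage, $|g|_A$ is even by Remark \ref{len_even_st}, and $\sum_i|h_i|_A=2|g|_A\equiv 0\pmod 4$ by Remark \ref{length_twice}; this is exactly the $r=1$ condition. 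For the converse I would start from an arbitrary tuple with $\sum|h_i|_A\equiv0\pmod 4$ and show it is a product of a tuple realizing the correct permutation-free skeleton and tuples of the form handled by Lemma \ref{lem_h_i} and Theorem \ref{rist}; concretely, reduce modulo the subgroup $\psi(\textup{Rist}(\widehat 1))=H_1\times\cdots\times H_1$ and modulo the "transport" elements $(e,\dots,\underset j{a_i},\dots,\underset l{a_i^{-1}},\dots,e)$ of Lemma \ref{lem_h_i}, after which the surviving representative has small, controlled $A$-length and can be checked by hand to lie in $\psi(\textup{St}(\widehat 1))$ precisely when the mod-$4$ condition holds (the obstruction being exactly the failure of $a_1,a_1^2,a_1^3$ to be expressible, as in the proof of Theorem \ref{rist}).

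For the inductive step, suppose the statement holds at level $k$ and let $(h_1,\dots,h_{d^{k+1}})$ be given. Group the coordinates into $d^k$ consecutive blocks of size $d$, say block $B_m=(h_{dm+1},\dots,h_{dm+d})$ for $0\le m<d^k$. An element $g\in\textup{St}(\widehat{k+1})$ means $g\in\textup{St}(\widehat k)$ and each of its $d^k$ level-$k$ sections lies in $\textup{St}(\widehat 1)$; under $\psi_k$ the $m$-th section is the element $g_m\in G_d$ with $\psi_1(g_m)=B_m$. By the base case applied to each $g_m$, the condition $g_m\in\textup{St}(\widehat 1)$ is equivalent to $\sum_{i=1}^d|h_{dm+i}|_A\equiv 0\pmod 4$, and moreover $|g_m|_A=\tfrac12\sum_{i=1}^d|h_{dm+i}|_A$ by Remark \ref{length_twice}. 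Now the tuple $(g_0,\dots,g_{d^k-1})=\psi_k(g)$ lies in $\textup{St}(\widehat k)$ iff, by the inductive hypothesis, for $1\le r\le k$ and $0\le t<d^{k-r}$ we have $\sum_{i=1}^{d^r}|g_{d^rt+i-1}|_A\equiv0\pmod{2^{r+1}}$, which upon substituting $|g_m|_A=\tfrac12\sum_{i=1}^d|h_{dm+i}|_A$ becomes $\sum_{j=1}^{d^{r+1}}|h_{d^{r+1}t+j}|_A\equiv0\pmod{2^{r+2}}$ — precisely the level-$(k+1)$ conditions for $2\le r'\le k+1$. Together with the $d^k$ block conditions mod $4$ (the $r'=1$ case) this gives exactly the asserted system, completing the induction.

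The main obstacle is the converse direction of the base case $k=1$: showing that \emph{every} $d$-tuple satisfying $\sum|h_i|_A\equiv0\pmod 4$ is actually realized by an element of $\textup{St}(\widehat 1)$, i.e. that the mod-$4$ congruence is not merely necessary but sufficient. The necessary direction and the entire inductive step are then essentially bookkeeping with Remark \ref{length_twice}. To handle the sufficiency I would lean on what is already built: $\psi(\textup{Rist}(\widehat 1))=H_1^{\,d}$ from Theorem \ref{rist}, the exchange elements of Lemma \ref{lem_h_i}, and the relations $a_i^{-1}a_j,\ a_i^2a_j^2\in K=H_1$ extracted in the proof of Theorem \ref{rist}. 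Using these one reduces an arbitrary qualifying tuple to a normal form supported on a single coordinate with $A$-length $0,1,2$ or $3$; the mod-$4$ hypothesis forces the length to be $0$ modulo $4$ after the reduction (since all the reducing moves preserve total $A$-length mod $4$ on the nose, each changing it by $0$ or by a multiple of $4$), and a length-$0$ single-coordinate tuple is the identity, which certainly lies in $\textup{St}(\widehat 1)$. One must be slightly careful that the reduction steps are legitimate inside $G_d$ and that "reverse word" issues from Lemma \ref{lem_h_i} do not change $A$-length; both are immediate since $|\overline h|_A=|h|_A$ and all invoked elements genuinely lie in $G_d$.
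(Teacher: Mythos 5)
Your proposal is correct and follows essentially the same route as the paper: induction on $k$, with necessity from Remarks \ref{len_even_st} and \ref{length_twice}, sufficiency in the base case by transporting each $h_i$ into a single coordinate via Lemma \ref{lem_h_i} and absorbing the residue into $\textup{Rist}(\widehat{1})=H_1^{\,d}$ from Theorem \ref{rist}, and the inductive step by the block decomposition and the length-doubling identity. The only nitpick is that the reduced single-coordinate element with $|w|_A\equiv 0\ (\textup{mod}\ 4)$ should be said to lie in $H_1$ (hence in $\textup{Rist}(\widehat{1})$) rather than to be the identity, but this does not affect the argument.
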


\begin{proof}
	We prove the statement by induction on $k$.
	
	Base case: Let $(h_1, h_2, \ldots, h_{d}) \in \textup{St}(\widehat{1})$, i.e., there exists $h \in \textup{St}(\widehat{1})$ such that $\psi(h) = (h_1, h_2, \ldots, h_{d})$. By Remark \ref{len_even_st}, $|h|_A$ is even. Since $\sum_{i = 1}^{d}|h_i|_A = 2|h|_A$ (cf. Remark \ref{length_twice}), we have $\sum_{i = 1}^{d}|h_i|_A \equiv 0 \; (\textup{mod}\; 4)$.
	
	Conversely, suppose $h_1, h_2, \ldots, h_{d} \in G_d$ such that $\sum_{i = 1}^{d}|h_i|_A \equiv 0 \; (\textup{mod}\; 4)$. By Lemma \ref{lem_h_i}, for $i \in \{1, \ldots, d-1\}$, $(e, \ldots, e, \underset{i}{h_i}, e, \ldots, e, \overline{h}_{i}^{^{-1}}) \in \textup{St}(\widehat{1})$, where $\overline{h}_{i}$ is the reverse word of $h_i$. Thus, $(h_1, h_2, \ldots, h_{d-1}, \overline{h}_{1}^{^{-1}} \cdots \overline{h}_{d-1}^{^{-1}}) \in \textup{St}(\widehat{1})$. Since $\sum_{i = 1}^{d}|h_i|_A \equiv 0 \; (\textup{mod}\; 4)$, we have $|\overline{h}_{d-1} \cdots \overline{h}_{1}h_{d}|_A \equiv 0 \; (\textup{mod}\; 4)$. Therefore, $(e, \ldots, e, \overline{h}_{d-1} \cdots \overline{h}_{1}h_{d}) \in \text{Rist}(\widehat{1})$. Hence, $(h_1, h_2, \ldots, h_{d}) \in \textup{St}(\widehat{1})$.
	
	Inductive case: Let $(h_1, h_2, \ldots, h_{d^{k+1}}) \in \textup{St}(\widehat{k + 1})$, i.e., there exists $h \in \textup{St}(\widehat{k + 1})$ such that $\psi_{k+1}(h) = (h_1, h_2, \ldots, h_{d^{k+1}})$. Observe that $\psi_k(h) = (g_1, g_2, \ldots, g_{d^k})$, for some $g_1, g_2, \ldots, g_{d^k} \in G_d$ such that 
	$$\psi(g_i) = (h_{(i-1)d + 1}, h_{(i-1)d + 2}, \ldots, h_{(i-1)d + d}).$$
	By Remark \ref{length_twice}, we have $2|g_i|_A = \sum_{j=1}^{d}|h_{(i-1)d + j}|_A$.
    Since $h \in \textup{St}(\widehat{k})$, by induction hypothesis, for $1\le r\le k$ and $0 \le t < d^{k - r}$, we have 
	$$\sum_{i = 1}^{d^r}|g_{d^rt + i}|_A \equiv 0 \; (\textup{mod}\; 2^{r+1}).$$
	\begin{tabular}{c|r}
		\quad\qquad&\begin{minipage}[r]{11.7cm}
			That is, $$\sum_{i = 1}^{d^r}2|g_{d^rt + i}|_A \equiv 0 \; (\textup{mod}\; 2^{r+2}).$$
			Since $2|g_i|_A = \sum_{j=1}^{d}|h_{(i-1)d + j}|_A$ (by Remark \ref{length_twice}), we have 
			$$\sum_{i = 1}^{d^r} \sum_{j=1}^{d}|h_{d^{r+1}t + (i-1)d + j}|_A \equiv 0 \; (\textup{mod}\; 2^{r+2})$$ and thus, $$\sum_{i = 1}^{d^{r+1}}|h_{d^{r+1}t + i}|_A \equiv 0 \; (\textup{mod}\; 2^{r+2}).$$
			Hence, for $2 \le r' = r+1 \le k+1$ and $0 \le t < d^{k + 1 - r'}$, we have
			$$\sum_{i = 1}^{d^{r'}}|h_{d^{r'}t + i}|_A \equiv 0 \; (\textup{mod}\; 2^{r'+1}).$$
		\end{minipage}
	\end{tabular}
	 In fact, these left-marked statements are equivalent and will be used in the converse. Further, for $1 \le i \le d^{k}$,  $\sum_{j=1}^{d}|h_{(i-1)d + j}|_A \equiv 0 \; (\textup{mod}\; 4)$ from base case as $g_i \in \textup{St}(\widehat{1})$. Hence, for $1\le r' \le k+1$ and $0 \le t < d^{k + 1 - r'}$,    $$\sum_{i = 1}^{d^{r'}}|h_{d^{r'}t + i}|_A \equiv 0 \ (\textup{mod}\ 2^{r'+1}).$$
	
	Conversely, let $h_1, \ldots, h_{d^{k+1}} \in G_d$ such that for $1\le r\le k+1$ and $0 \le t < d^{k + 1 - r}$,
		$$\sum_{i = 1}^{d^r}|h_{d^rt + i}|_A \equiv 0 \ (\textup{mod}\ 2^{r+1}).$$
	For $r = 1$, note that $\sum_{i = 1}^{d}|h_{dt + i}|_A \equiv 0 \ (\textup{mod}\ 4)$. Thus, for each $j \in \{1, \ldots, d^k\}$, there exists $g_j \in \textup{St}(\widehat{1})$ such that $\psi(g_j) = (h_{(j-1)d+1}, h_{(j-1)d+2}, \ldots, h_{(j-1)d+d})$. Note that, for all $j \in \{1, \ldots, d^k\}$, $2|g_j|_A = \sum_{i = 1}^{d}|h_{(j-1)d + i}|_A$. From the above marked statements, for $1\le r\le k$ and $0 \le t < d^{k - r}$, we have 
	$$\sum_{i = 1}^{d^r}|g_{d^rt + i}|_A \equiv 0 \ (\textup{mod}\ 2^{r+1}).$$  
    Hence by induction hypothesis, there exists $g \in \textup{St}(\widehat{k})$ such that $\psi_k(g) =\break (g_1, g_2, \ldots, g_{d^k})$. For each $j \in \{1, 
	\ldots, d^k\}$, since $g_j \in \textup{St}(\widehat{1})$, we have $g \in \textup{St}(\widehat{k+1})$ such that $\psi_{k+1}(g) = (h_1, h_2, \ldots, h_{d^{k+1}})$. 
\end{proof}

\begin{corollary}
	The group $G_d$ is very strongly fractal.
\end{corollary}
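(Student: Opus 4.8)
The plan is to verify directly the defining property: $\pi_x(\textup{St}(\widehat{k+1})) = \textup{St}(\widehat{k})$ for every $x \in X$ and every $k \in \mathds{N}\cup\{0\}$. One inclusion is routine: if $g \in \textup{St}(\widehat{k+1})$ then $g \in \textup{St}(\widehat 1)$, so $g(xv) = x\,g|_x(v)$ for all $v \in X^*$, and since $g$ fixes every vertex of level $\le k+1$ we get $g|_x(v)=v$ whenever $|v| \le k$; as $g|_x \in G_d$ by self-similarity, $g|_x \in \textup{St}(\widehat{k})$. The substance is the reverse inclusion $\textup{St}(\widehat{k}) \subseteq \pi_x(\textup{St}(\widehat{k+1}))$.

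For that, fix $h \in \textup{St}(\widehat k)$; I will build $g \in \textup{St}(\widehat{k+1})$ with $g|_x = h$ by prescribing the first-level sections of $g$. The mechanism is the following small observation: if $(g_1, \dots, g_d) \in G_d^{\,d}$ has $g_i \in \textup{St}(\widehat k)$ for all $i$ and $\sum_{i=1}^d |g_i|_A \equiv 0 \pmod 4$, then by \tref{stab} (case $k=1$) there is $g \in \textup{St}(\widehat 1)$ with $\psi(g) = (g_1,\dots,g_d)$, and since every section lies in $\textup{St}(\widehat k)$ the element $g$ fixes all vertices of level $\le k+1$, i.e. $g \in \textup{St}(\widehat{k+1})$. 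So it remains to choose such a tuple with $g_x = h$. For $k=0$ the only requirement is the exponent-sum congruence, so I take $g_x = h$, one $g_y = a_1^{m}$ with $m \equiv -|h|_A \pmod 4$ (any $y \ne x$, available since $d \ge 3$), and the remaining sections trivial. For $k \ge 1$, $h \in \textup{St}(\widehat 1)$ forces $|h|_A$ to be even (\rref{len_even_st}); if $|h|_A \equiv 0 \pmod 4$, then taking $g_x = h$ and all other sections trivial already works.

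The only genuinely delicate case is $k \ge 1$ with $|h|_A \equiv 2 \pmod 4$: here I need a companion element $z_k \in \textup{St}(\widehat k)$ with $|z_k|_A \equiv 2 \pmod 4$ to put at some position $y \ne x$, so that $\sum_i |g_i|_A = |h|_A + |z_k|_A \equiv 0 \pmod 4$. Such a $z_k$ is again supplied by \tref{stab}: the tuple $(a_1^{\,2^{k+1}}, e, \dots, e) \in G_d^{\,d^k}$ satisfies all the congruence conditions of \tref{stab} (each block-sum is $0$ or $2^{k+1}$, and $2^{k+1} \equiv 0 \pmod{2^{r+1}}$ for $1 \le r \le k$), hence is realized by some $z_k \in \textup{St}(\widehat k)$; iterating \rref{length_twice} gives $2^k|z_k|_A = 2^{k+1}$, so $|z_k|_A = 2$. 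This closes the last case. I expect the parity bookkeeping around this $2 \pmod 4$ obstruction to be the main (and essentially only) point requiring care — recognizing that the naive ``lift $h$ with trivial companion sections'' argument fails exactly when $|h|_A \equiv 2 \pmod 4$, and that it is repaired by a single explicit element of each level stabilizer; everything else is a direct application of \tref{stab} together with the section identities already established.
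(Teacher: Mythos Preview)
Your proof is correct and follows essentially the same approach as the paper: both lift $h \in \textup{St}(\widehat{k})$ to $\textup{St}(\widehat{k+1})$ via \tref{stab} by placing $h$ at position $x$ and a suitable companion at another position to fix the mod-$4$ exponent sum. The only difference is cosmetic: where you build a special companion $z_k$ with $|z_k|_A = 2$, the paper simply reuses $h$ itself (placing $(e,\dots,\underset{x}{h},h,e,\dots,e)$), which avoids the auxiliary construction since $2|h|_A \equiv 0 \pmod 4$ automatically.
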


\begin{proof}
	Let $k \in \mathds{N}\cup \{0\}$ and $x \in X$. Suppose $g \in \text{St}(\widehat{k})$. Since $g \in \text{St}(\widehat{1})$, $|g|_A$ is even. Therefore, $|g|_A \equiv 0$ or $2 \ (\text{mod }4)$; thus, by Theorem \ref{stab}, there exists $g' \in \text{St}(\widehat{1})$ such that $\psi(g') = (e, \ldots, e, \underset{x}{g}, e, \ldots, e)$ or $\psi(g') = (e, \ldots, e, \underset{x}{g}, g, e, \ldots, e)$, respectively. In any case, since $g \in \text{St}(\widehat{k})$, we have $g' \in \text{St}(\widehat{k+1})$. Hence, $g = \pi_x(g') \in \pi_x(\text{St}(\widehat{k+1}))$.  In any self-similar group, note that  $\pi_x(\text{St}(\widehat{k+1})) = \text{St}(\widehat{k+1})|_x   \subseteq \text{St}(\widehat{k})$. Hence, $G_d$ is very strongly fractal.
\end{proof}

The following lemma is useful in the subsequent theorem on the structure of the quotient $\textup{St}(\widehat{k})/ \textup{Rist}(\widehat{k})$.

\begin{lemma}\label{ai}
	For every $g \in G_d$, there exist $h\in H_k$ and $n \in \mathds{N}$ with $0 \le n  < 2^{k+1}$ such that $g = a_1^nh$.
\end{lemma}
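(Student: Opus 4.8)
The plan is to exploit that, for odd $d$, the total exponent sum $|\cdot|_A$ descends to a well-defined additive function $G_d \to \mathds{Z}$. Indeed, if two words $w_1, w_2$ over $\tilde A$ represent the same element of $G_d$, then $w_1 w_2^{-1}$ represents the identity, so by \tref{exponent_sum_d} we get $|w_1|_p - |w_2|_p = 0$ for every $p \in A$; hence $|g|_p$, and therefore $|g|_A = \sum_{p \in A} |g|_p$, depends only on the group element $g$ and not on the chosen word representing it. Reading exponent sums off concatenations and off formal inverses then gives $|g g'|_A = |g|_A + |g'|_A$ and $|g^{-1}|_A = -|g|_A$ for all $g, g' \in G_d$; in particular $|a_1^{-n}|_A = -n$ for every integer $n \ge 0$.

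Granting this, the argument is short. Given $g \in G_d$, put $m = |g|_A \in \mathds{Z}$ and let $n$ be the unique integer with $0 \le n < 2^{k+1}$ and $n \equiv m \pmod{2^{k+1}}$. Set $h = a_1^{-n} g \in G_d$. By construction $g = a_1^n h$ with $0 \le n < 2^{k+1}$, and by additivity $|h|_A = |a_1^{-n}|_A + |g|_A = -n + m \equiv 0 \pmod{2^{k+1}}$, so $h \in H_k$. This yields the asserted decomposition.

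The only delicate point — and really the entire content of the lemma — is the well-definedness and additivity of $|\cdot|_A$ as a function on $G_d$, which is precisely where \tref{exponent_sum_d} (available because $d$ is odd) enters; without it the ``exponent sum of a group element'' would not make sense. Everything beyond that is elementary modular arithmetic, so I do not expect any substantive obstacle.
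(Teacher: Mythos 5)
Your proof is correct and rests on the same idea as the paper's: the total exponent sum $|\cdot|_A$ is a well-defined homomorphism $G_d\to\mathds{Z}$ (via \tref{exponent_sum_d}), and its value mod $2^{k+1}$ determines the power of $a_1$ needed, with everything else landing in $H_k$. The paper reaches the same conclusion slightly more circuitously, by first writing $g=a_1^{n_1}\cdots a_d^{n_d}h_1$ with $h_1\in G_d'\le H_k$ and then collecting the generators into a single power of $a_1$; your direct choice $h=a_1^{-n}g$ is a clean shortcut but not a genuinely different argument.
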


\begin{proof}
	For $w \in G_d'$, since $|w|_p = 0$ for all $p \in A$, we have $G_d' \le H_k$. Accordingly, for any $g \in G_d$, we have
	\begin{eqnarray*}
		g &=& a_1^{n_{1}} \cdots a_d^{n_{d}}h_1,\ \text{for some}\ h_1 \in G_d'\\
		&=& a_1^{n_{1}} \cdots a_d^{n_{d}}(a_d^{-n_{d}}\cdots a_2^{-n_{2}}a_1^{n_2 + \ldots + n_d})h_2,\\ 
		&&\text{where}\ h_2 = (a_d^{-n_{d}}\cdots a_2^{-n_{2}}a_1^{n_2 + \ldots + n_d})^{-1}h_1 \in H_k\\
		&=& a_1^{(n_{1} + \ldots + n_{d})}h_2\\
		&=& a_1^nh,\ \text{for some}\ h \in H_k,\ 0 \leq n < 2^{k+1}\ \text{with}\ n \equiv n_{1} + \ldots + n_{d} \ (\text{mod}\ 2^{k+1}).
	\end{eqnarray*}
\end{proof}

\begin{theorem}\label{quotient}
	Let $\mathcal{C}_n$ be the cyclic group $\mathds{Z}/n\mathds{Z}$. Then, for $k \ge 1$, we have ${\textup{St}(\widehat{k})}/{\textup{Rist}(\widehat{k})} \cong \prod_{i = 1}^{d^k - 1}\mathcal{C}_{\alpha_i}$ (say, $H$), where 
	$\alpha_i = 2^{k+1}$ if $d \ndivides i$; else, $\alpha_i = 2^{k-s}$, for $i = d^sr$ with $s \ge 1$ and $d \ndivides r$. Hence, $[{\textup{St}(\widehat{k})} : {\textup{Rist}(\widehat{k})}] = 2^t$, where $t = {\frac{(k+1)d^{k+1} - (k+3)d^k +d^{k-1} + 1}{d-1}}$.
\end{theorem}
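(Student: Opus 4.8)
The plan is to compute the quotient $\textup{St}(\widehat{k})/\textup{Rist}(\widehat{k})$ by combining the explicit description of $\textup{St}(\widehat{k})$ from Theorem \ref{stab} with that of $\textup{Rist}(\widehat{k})$ from Theorem \ref{rist}. By Theorem \ref{rist}, $\psi_k(\textup{Rist}(\widehat{k})) = H_k^{d^k}$, and by Theorem \ref{stab} an element $(h_1,\dots,h_{d^k})$ of $G_d^{d^k}$ lies in $\psi_k(\textup{St}(\widehat{k}))$ precisely when a list of congruences on the partial sums $\sum |h_i|_A$ holds. The first step is to observe via Lemma \ref{ai} that the map $(h_1,\dots,h_{d^k}) \mapsto (|h_1|_A \bmod 2^{k+1}, \dots, |h_{d^k}|_A \bmod 2^{k+1})$ induces an isomorphism $\textup{St}(\widehat{k})/\textup{Rist}(\widehat{k}) \xrightarrow{\ \sim\ } R$, where $R$ is the subgroup of $(\mathds{Z}/2^{k+1}\mathds{Z})^{d^k}$ cut out by the Theorem \ref{stab} congruences: surjectivity onto $R$ follows because, given a target vector of residues $(n_1,\dots,n_{d^k})$ satisfying the congruences, each $a_1^{n_i}$ realizes the $i$-th coordinate and the resulting tuple lies in $\textup{St}(\widehat{k})$ by Theorem \ref{stab}; injectivity is the statement that if all $|h_i|_A \equiv 0 \pmod{2^{k+1}}$ then each $h_i \in H_k$, i.e., the tuple is in $\textup{Rist}(\widehat{k})$.

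The second step is the purely arithmetic computation of $R$ as an abstract abelian group. Let $N = d^k$ and write each coordinate as an element of $\mathds{Z}/2^{k+1}\mathds{Z}$. The defining congruences are: for each $1 \le r \le k$ and each block $B$ of $d^r$ consecutive coordinates aligned at a multiple of $d^r$, the block sum is $\equiv 0 \pmod{2^{r+1}}$. I would change coordinates so that these become transparent. Concretely, order the constraints from the coarsest level $r=k$ down; the level-$r$ constraint on a block refines the level-$(r{+}1)$ constraint that contains it, so the "new information" at level $r$ for a given length-$d^{r+1}$ block is: the $d$ sub-block sums (mod $2^{r+1}$) are free subject to their total being the already-fixed value $\pmod{2^{r+1}}$. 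Counting: there are $d^{k-r}$ blocks at level $r$, and passing from level $r{+}1$ to level $r$ multiplies the "degrees of freedom" while each level-$r$ block's sum is pinned mod $2^{r+1}$ rather than free mod $2^{k+1}$. Carrying this bookkeeping to the bottom shows $R \cong \prod_{i=1}^{d^k-1}\mathcal{C}_{\alpha_i}$ with $\alpha_i = 2^{k+1}$ when $d \nmid i$ and $\alpha_i = 2^{k-s}$ when $i = d^s r$ with $d \nmid r$, $s\ge 1$ — the point being that the "$d^s r$-th slot" is the one whose value becomes the sum of a block whose total is constrained $\pmod{2^{k-s+1}}$ while one coordinate of that block is determined by the others, leaving an effective modulus of $2^{k-s}$. (One clean way to make this rigorous is induction on $k$: peel off the level-$1$ constraints, identify the quotient by the first $d^{k-1}$ block-sum conditions with the analogous group for $k-1$ over residues mod $2^{k+1}$, and track how the moduli shift.)

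The last step is the index/order count. Since $H = \prod_{i=1}^{d^k-1}\mathcal{C}_{\alpha_i}$, we have $[\textup{St}(\widehat{k}):\textup{Rist}(\widehat{k})] = \prod_{i=1}^{d^k-1}\alpha_i = 2^t$ with
\[
t = \sum_{i=1}^{d^k-1} v_2(\alpha_i) = (k+1)\,\#\{1\le i < d^k : d\nmid i\} + \sum_{s\ge 1}(k-s)\,\#\{1 \le i < d^k : v_d(i) = s\}.
\]
Now $\#\{1\le i < d^k : v_d(i) = s\} = d^{k-s} - d^{k-s-1}$ for $1 \le s \le k-1$, and $\#\{1\le i<d^k : d\nmid i\} = d^k - d^{k-1}$. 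Substituting and summing the resulting geometric-type series (e.g.\ $\sum_{s=0}^{k-1}(k-s)(d^{k-s}-d^{k-s-1})$, reindexed by $j=k-s$) gives, after the standard manipulation $\sum_{j=1}^{k} j(d^j - d^{j-1}) = \frac{kd^{k+1}-(k+1)d^k+1}{d-1}$ together with the contribution of the $d\nmid i$ terms, the closed form $t = \frac{(k+1)d^{k+1} - (k+3)d^k + d^{k-1} + 1}{d-1}$. The main obstacle is the bookkeeping in the second step: correctly identifying, in the refinement from level $r{+}1$ to level $r$, exactly which coordinate "absorbs" the constrained block sum and hence which index $i$ receives the reduced modulus $2^{k-s}$; once the change of coordinates is set up carefully (an induction on $k$ is the safest route), the surjectivity/injectivity in step one is immediate from Lemma \ref{ai} and Theorems \ref{rist}, \ref{stab}, and step three is a routine finite sum.
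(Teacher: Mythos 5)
Your plan is correct and follows essentially the same route as the paper: reduce via Lemma \ref{ai} and Theorems \ref{stab}, \ref{rist} to the subgroup of $(\mathds{Z}/2^{k+1}\mathds{Z})^{d^k}$ cut out by the block-sum congruences, then change coordinates so that the slot $j=d^sr$ carries the block sum divided by $2^{s+1}$, which is exactly the paper's explicit isomorphism $\theta$ (sending $\llbracket n_j\rrbracket$ to $n_j$ for $d\nmid j$ and to $\sum_{i=1}^{d^s}n_{(r-1)d^s+i}/2^{s+1} \pmod{2^{k-s}}$ otherwise), and finish with the same $2$-adic valuation count. The only difference is presentational: the paper verifies the explicit map is a bijective homomorphism directly rather than running your suggested induction on $k$, and your passing phrase ``constrained $\pmod{2^{k-s+1}}$'' should read that the block sum is constrained to the subgroup $2^{s+1}\mathds{Z}/2^{k+1}\mathds{Z}\cong\mathcal{C}_{2^{k-s}}$.
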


\begin{proof}
	For $g \in \textup{St}(\widehat{k})$, let $\psi_{k}(g) = (g_1, \ldots, g_{d^k})$. By Lemma \ref{ai}, for every $g_i$ there exist $h_i \in H_k$ and $n_i \in \mathds{N}$ with $0 \le n_i  < 2^{k+1}$ such that $g_i = a_1^{n_i}h_i$. By Theorem \ref{rist}, $(h_1, \ldots, h_{d^k}) \in \textup{Rist}(\widehat{k})$. Since $(a_1^{n_1}h_1, \ldots, a_1^{n_{d^k}}h_{d^k})$ and $(h_1^{-1}, \ldots, h_{d^k}^{-1})$ are in $\textup{St}(\widehat{k})$, we have $(a_1^{n_1}, \ldots a_1^{n_{d^k}}) \in \textup{St}(\widehat{k})$. Therefore, by Theorem \ref{stab}, for $1\le r\le k$ and $0 \le t < d^{k - r}$, $\sum_{i = 1}^{d^r}|n_{d^rt + i}|_A \equiv 0 \ (\textup{mod}\ 2^{r+1})$. Hence, for every $g \in \textup{St}(\widehat{k})$, there exist $n_1, \ldots, n_{d^k} \in \{0, \ldots, 2^{k+1}-1\}$ and $h_1, \ldots, h_{d^k} \in \textup{Rist}(\widehat{k})$ such that $\psi_{k}(g) = (a_1^{n_1}, \ldots a_1^{n_{d^k}})(h_1, \ldots, h_{d^k})$, where, $\sum_{i = 1}^{d^r}|n_{d^rt + i}|_A \equiv 0 \ (\textup{mod}\ 2^{r+1})$, for $1\le r\le k$ and $0 \le t < d^{k - r}$. Therefore,
	\begin{equation}\label{quot_elts}
	\textup{St}(\widehat{k})/\textup{Rist}(\widehat{k})  = \Big\{(a_1^{n_1}, \ldots, a_1^{n_{d^{k}}})\textup{Rist}(\widehat{k}) : 0 \le n_j < 2^{k+1}, 
	\end{equation}
	\begin{equation*}
	\quad \quad \quad \quad \quad \quad \quad \quad \quad \quad \quad \quad \quad  \sum_{i = 1}^{d^r}n_{d^rt + i} \equiv 0 \ (\textup{mod}\ 2^{r+1}),
		1\le r\le k, 0 \le t < d^{k - r}\Big\}.
	\end{equation*}
	Thus, for the elements of $\textup{St}(\widehat{k})/\textup{Rist}(\widehat{k})$, it is sufficient to consider the powers of $a_1$. Accordingly, we denote an element $(a_1^{n_1}, \ldots, a_1^{n_{d^{k}}})\textup{Rist}(\widehat{k}) \in \textup{St}(\widehat{k})/\textup{Rist}(\widehat{k})$ by $\llbracket n_j \rrbracket_{1 \le j \le d^k}$, or simply by $\llbracket n_j \rrbracket$.
	Consider the map 
	$$\theta : {\textup{St}(\widehat{k})}/{\textup{Rist}(\widehat{k})} \to H$$
	 which sends $\llbracket n_j \rrbracket$ to $(l_1, \ldots, l_{d^k-1}) \in H$, where 
	\[l_j = \begin{cases}
	n_j & \text{ when } d \ndivides j\\
	\frac{\sum_{i=1}^{d^s}n_{(r-1)d^s+i}}{2^{s+1}}\ (\textup{mod}\ 2^{k-s}) & \text{ when } j = d^sr, s \ge 1, d \ndivides r.
	\end{cases}\] 
     For $\llbracket m_j \rrbracket, \llbracket n_j \rrbracket \in {\textup{St}(\widehat{k})}/{\textup{Rist}(\widehat{k})}$, note that $\llbracket m_j \rrbracket \llbracket n_j \rrbracket = \llbracket \gamma_j \rrbracket$, where $0 \le \gamma_j (\equiv m_j + n_j (\textup{mod}\ 2^{k+1})) < 2^{k+1}$. For $1 \le d^sr < d^k$ such that $s \ge 1$ and $d \ndivides r$, observe that $$\frac{\sum_{i=1}^{d^s}\gamma_{(r-1)d^s+i}}{2^{s+1}} \equiv \frac{\sum_{i=1}^{d^s}m_{(r-1)d^s+i}}{2^{s+1}} + \frac{\sum_{i=1}^{d^s}n_{(r-1)d^s+i}}{2^{s+1}}\ (\textup{mod}\ 2^{k-s})$$ and hence, $\theta$ is a homomorphism.
	
	Now we show that $\theta$ is onto. Let $(l_1, l_2, \ldots, l_{d^k-1}) \in H$. For each $j \in \{1, \ldots, d^k\}$, consider the numbers $0 \le n_j < 2^{k+1}$ such that $n_j = l_j$, when $d \ndivides j$ and for $1\le s \le k-1$ and $1 \le r <d^{k-s}$ with $d \ndivides r$, $n_{d^sr} \equiv 2^{s+1}l_{d^sr} - \sum_{i=1}^{d^s - 1}n_{(r-1)d^s+i} \; (\textup{mod}\ 2^{k+1})$, and $n_{d^k} \equiv - \sum_{i=1}^{d^k - 1}n_{i}\; (\textup{mod}\ 2^{k+1})$. Observe that $\llbracket n_j \rrbracket \in \textup{St}(\widehat{k})/\textup{Rist}(\widehat{k})$ and
	\[l_j = \begin{cases}
	n_j & \text{ when } d \ndivides j\\
	\frac{\sum_{i=1}^{d^s}n_{(r-1)d^s+i}}{2^{s+1}}\ (\textup{mod}\ 2^{k-s}) & \text{ when } j = d^sr, s \ge 1, d \ndivides r.
	\end{cases}\]
	Hence, $\theta$ is onto.
	
    To show that $\theta$ is one-one, let $\llbracket m_j \rrbracket, \llbracket n_j \rrbracket \in {\textup{St}(\widehat{k})}/{\textup{Rist}(\widehat{k})}$ such that $\theta(\llbracket m_j \rrbracket) = \theta(\llbracket n_j \rrbracket) = (l_1, \ldots, l_{d^{k}-1})$.
    Clearly, for $d \ndivides j$, $m_j = n_j$. Also, for $j = d^sr$, with $s \ge 1$, $d \ndivides r$, we have, 
    $\frac{\sum_{i=1}^{d^s}m_{(r-1)d^s+i}}{2^{s+1}} \equiv \frac{\sum_{i=1}^{d^s}n_{(r-1)d^s+i}}{2^{s+1}}\ (\textup{mod}\ 2^{k-s}),$
    which implies that
    \begin{eqnarray}\label{eqn}
    	\sum_{i=1}^{d^s}m_{(r-1)d^s+i} \equiv \sum_{i=1}^{d^s}n_{(r-1)d^s+i}\; (\textup{mod}\ 2^{k+1})
    \end{eqnarray}
    When $s = 1$, since, for $d \ndivides j$, $m_j = n_j$, by Equation (\ref{eqn}), we get, $m_{dr} \equiv n_{dr}(\textup{mod}\ 2^{k+1})$, and therefore, for $1 \le r < d^{k-1}$ such that $d \ndivides r$, we have $m_{dr} = n_{dr}$. Further, when $s = 2$, since, for $d \ndivides j$ or $j = dr$ such that $d \ndivides r$, $m_j = n_j$, using Equation (\ref{eqn}), we get, $m_{d^2r} = n_{d^2r}$. Successively, for each $j \in \{1, \ldots, d^{k}\}$, we have $m_j = n_j$.
    
    Further, observe that, the number of $j$'s in $\{1, \ldots, d^k-1\}$ such that $d \ndivides j$ is $(d-1)d^{k-1}$ and number of $j$'s of the form $d^sr$ such that $d \ndivides r$ and $s \ge 1$ is $(d-1)d^{k-s}$. Therefore, 
    $$H = \prod_{i = 1}^{d^k - 1}\mathcal{C}_{\alpha_i} \cong \mathcal{C}_{2^{k+1}}^{(d-1)d^{k-1}} \times \mathcal{C}_{2^{k-1}}^{(d-1)d^{k-2}} \times \mathcal{C}_{2^{k-2}}^{(d-1)d^{k-3}} \times \cdots \times \mathcal{C}_{2}^{(d-1)}.$$
    Thus,
    \begin{eqnarray*}
    	[{\textup{St}(\widehat{k})} : {\textup{Rist}(\widehat{k})}] &=& {(2^{k+1})}^{(d-1)d^{k-1}}({(2^{k-1})}^{(d-1)d^{k-2}}{(2^{k-2})}^{(d-1)d^{k-3}} \cdots 2^{d-1})\\
    	&=& {(2^{k+1})}^{(d-1)d^{k-1}}2^{(d-1)\{(k-1)d^{k-2}+(k-2)d^{k-3} + \ldots + 2d + 1\}}
    \end{eqnarray*} 
    Assuming $s = (k-1)d^{k-2}+(k-2)d^{k-3} + \ldots + 2d + 1$, we have,
    \begin{eqnarray*}
    	(d-1)s & = & (k-1)d^{k-1} - d^{k-2} - d^{k-3} - \ldots - d - 1 ,\\
    	& = & (k-1)d^{k-1} - \frac{d^{k-1}-1}{d-1}\\
    \end{eqnarray*}
    Hence,
    \begin{eqnarray*}
    	[{\textup{St}(\widehat{k})} : {\textup{Rist}(\widehat{k})}] & = & {(2^{k+1})}^{(d-1)d^{k-1}}2^{(k-1){d^{k-1}} - \frac{d^{k-1}-1}{d-1}}\\
    	&=& 2^{\frac{(k+1)d^{k+1} - (k+3)d^k +d^{k-1} + 1}{d-1}}.
    \end{eqnarray*}
\end{proof}

Note that, by Theorem \ref{quotient}, $\textup{Rist}(\widehat{k})$ is of finite index in $\textup{St}(\widehat{k})$, for all $k \in \mathds{N}$. Further, since $\textup{St}(\widehat{k})$ is of finite index in $G_d$, the index of $\textup{Rist}(\widehat{k})$ in $G_d$ is finite, for all $k \in \mathds{N}$, so that $G_d$ is a branch group as stated in the following result. 

\begin{theorem}
	The  group $G_d$ is a branch group.
\end{theorem}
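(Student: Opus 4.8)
The plan is to assemble the statement directly from material already established, since all the substantive work lies in Theorems \ref{rist} and \ref{quotient}. Recall that a subgroup $G \le \textup{Aut}(T)$ is a branch group precisely when it acts transitively on every level $k$ and $[G : \textup{Rist}_G(\widehat{k})] < \infty$ for all $k$. So there are exactly two things to verify for $G_d$: level-transitivity, and the finiteness of the index of each level rigid stabilizer.

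First I would dispose of level-transitivity. This is immediate: $G_d$ is fractal by Theorem \ref{weakly_branch}, and fractality as defined in the Preliminaries already presupposes that $G_d$ acts transitively on each $X^k$. (Alternatively, since $\{\lambda_{a_1}, \ldots, \lambda_{a_d}\}$ generates $S_d$, which is transitive on $X$, one lifts transitivity up the tree recursively using self-similarity.)

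Next I would handle the index condition by factoring through the level stabilizer. By Theorem \ref{quotient}, $[\textup{St}(\widehat{k}) : \textup{Rist}(\widehat{k})] = 2^t < \infty$, where $t = \frac{(k+1)d^{k+1} - (k+3)d^k + d^{k-1} + 1}{d-1}$, for every $k \in \mathds{N}$. On the other hand, as noted in the Preliminaries, $\textup{St}(\widehat{k})$ is the kernel of the action of $G_d$ on the finite set $X^k$, hence $[G_d : \textup{St}(\widehat{k})] < \infty$. Multiplying indices,
\[
[G_d : \textup{Rist}(\widehat{k})] = [G_d : \textup{St}(\widehat{k})]\,[\textup{St}(\widehat{k}) : \textup{Rist}(\widehat{k})] < \infty
\]
for all $k \in \mathds{N}$, which together with level-transitivity gives the claim.

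I do not anticipate any genuine obstacle here; the present theorem is a repackaging of Theorems \ref{rist} and \ref{quotient}. The only point meriting a line of care is making explicit that level-transitivity is a separate part of the definition of a branch group that must be checked, rather than following from the index bound alone.
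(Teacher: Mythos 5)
Your proposal is correct and matches the paper's argument exactly: the paper also deduces $[G_d : \textup{Rist}(\widehat{k})] < \infty$ by combining $[\textup{St}(\widehat{k}) : \textup{Rist}(\widehat{k})] < \infty$ from Theorem \ref{quotient} with the finiteness of $[G_d : \textup{St}(\widehat{k})]$. Your explicit remark on level-transitivity is a point the paper leaves implicit (it follows from fractality as established in Theorem \ref{weakly_branch}), but it is not a different route.
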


\section{Rigid Kernel and Hausdorff Dimension}
\label{rigid_haus}

In this section, to study the congruence subgroup problem of the group $G_d$, we give an explicit structure of its rigid kernel. Also, we observe that the branch kernel is non-trivial.  Further, we determine the Hausdorff dimension of the closure of $G_d$. For more details on Hausdorff dimension, one may refer to \cite{Garrido_2017, Skipper2020}.

For a branch group $G$, the CSP is equivalent to  that every subgroup of finite index in $G$ contains $\textup{Rist}_G(\widehat{k})$ for some $k$, and every $\textup{Rist}_G(\widehat{k})$ contains a level stabilizer. Thus, in terms of profinite completions, $G$ has CSP if and only if its branch kernel, i.e.,  $\ker(\widehat{G} \twoheadrightarrow \widetilde{G})$, and rigid kernel, i.e., $\ker(\widetilde{G} \twoheadrightarrow \overline{G})$ are trivial; here, $\overline{G} = \underset{k \ge 1}{\underleftarrow{\text{lim}}}\ G/ \textup{St}_G(\widehat{k})$, $\widetilde{G} = \underset{k \ge 1}{\underleftarrow{\text{lim}}}\ G/ \textup{Rist}_G(\widehat{k})$, and $\widehat{G} = \underset{N \in \mathscr{N}}{\underleftarrow{\text{lim}}}\ G/ N$, where $\mathscr{N}$ is the set of finite index normal subgroups of $G$. Note that the rigid kernel of $G$, $$\text{ker}(\widetilde{G} \twoheadrightarrow \overline{G}) = \underset{k \ge 1}{\underleftarrow{\text{lim}}}\ \textup{St}_G(\widehat{k})/ \textup{Rist}_G(\widehat{k}),$$ where the maps $\rho_{k+p, k} : \textup{St}_G(\widehat{k+p})/ \textup{Rist}_G(\widehat{k+p}) \to \textup{St}_G(\widehat{k})/ \textup{Rist}_G(\widehat{k})$ of the inverse system $\Big(\textup{St}_G(\widehat{k})/ \textup{Rist}_G(\widehat{k}), \rho_{k+p, k}\Big)_{k, p \in \mathds{N}}$ come from the natural inclusions \break $\textup{St}_G(\widehat{k + p}) \hookrightarrow \textup{St}_G(\widehat{k})$ and $\textup{Rist}_G(\widehat{k + p}) \hookrightarrow \textup{Rist}_G(\widehat{k})$.

For the group $G_d$, the maps $\rho_{k + p, k}$ of the inverse system can be evaluated using the following remark. 

\begin{remark}
	Note that, by Remark \ref{length_twice}, for $g \in \textup{St}(\widehat{k+1})$ such that $\psi_{k+1}(g) = (a_1^{n_1}, a_1^{n_2}, \ldots, a_1^{n_{d^{k + 1}}})$, we have $\psi_{k}(g) = (h_1, \ldots, h_{d^{k}})$, for some $h_j \in G_d$ with $|h_j|_A = \frac{\sum_{i = 1}^{d}n_{(j-1)d + i}}{2}$. Then the coset $(h_1, \ldots, h_{d^{k}}){\textup{Rist}(\widehat{k})} = (a_1^{m_1}, a_1^{m_2}, \ldots, a_1^{m_{d^{k}}}){\textup{Rist}(\widehat{k})}$, where $m_i \equiv |h_i|_A \; (\textup{mod}\ 2^{k+1})$ with $0 \le m_i < 2^{k+1}$. Hence, for the group $G_d$, the map $$\rho_{k + 1, k} : \textup{St}(\widehat{k + 1})/ \textup{Rist}(\widehat{k + 1}) \to \textup{St}(\widehat{k})/ \textup{Rist}(\widehat{k})$$ sends $$(a_1^{n_1}, a_1^{n_2}, \ldots, a_1^{n_{d^{k + 1}}}){\textup{Rist}(\widehat{k+1})} \mapsto (a_1^{m_1}, a_1^{m_2}, \ldots, a_1^{m_{d^{k}}}){\textup{Rist}(\widehat{k})}.$$
\end{remark}

\begin{theorem}\label{rig_ker}
	The rigid kernel of $G_d$ is the abelian group
	$$\prod_{k \in \mathds{N}} \mathcal{C}_{2^{k}}^{d^k-d^{k-1}},$$ where $\mathcal{C}_n$ is the cyclic group $\mathds{Z}/n\mathds{Z}$.
\end{theorem}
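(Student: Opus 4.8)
The plan is to compute the inverse limit $\underleftarrow{\lim}\ \textup{St}(\widehat{k})/\textup{Rist}(\widehat{k})$ directly, using the explicit description of $\textup{St}(\widehat{k})/\textup{Rist}(\widehat{k})$ from \tref{quotient} together with the formula for the bonding maps $\rho_{k+1,k}$ from the preceding remark. First I would fix convenient coordinates: by \eqref{quot_elts} an element of $\textup{St}(\widehat{k})/\textup{Rist}(\widehat{k})$ is a tuple $\llbracket n_j \rrbracket_{1 \le j \le d^k}$ with $0 \le n_j < 2^{k+1}$ subject to the congruences $\sum_{i=1}^{d^r} n_{d^r t + i} \equiv 0 \ (\textup{mod}\ 2^{r+1})$ for all $1 \le r \le k$ and $0 \le t < d^{k-r}$. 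The map $\rho_{k+1,k}$ sends $\llbracket n_j \rrbracket_{1 \le j \le d^{k+1}}$ to $\llbracket m_i \rrbracket_{1 \le i \le d^k}$ where $m_i \equiv \tfrac{1}{2}\sum_{j=1}^{d} n_{(i-1)d+j}\ (\textup{mod}\ 2^{k+1})$ (this halving is legitimate precisely because of the $r=1$ congruence at level $k+1$).

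Next I would introduce, at each level $k$, the ``block sum'' coordinates that already appeared implicitly in the isomorphism $\theta$ of \tref{quotient}: for each vertex $v = d^s r$ (with $d \ndivides r$, $0 \le s \le k$, where $s = 0$ means $d \ndivides v$) set $\sigma_v^{(k)} = \tfrac{1}{2^{s+1}}\sum_{i=1}^{d^s} n_{(r-1)d^s + i}$ read modulo $2^{k-s}$ — and $\sigma_v^{(k)} = n_v$ when $s = 0$. Theorem \ref{quotient} says exactly that $(\sigma_v^{(k)})_v$ ranges freely over $\prod_v \mathcal{C}_{2^{k-s(v)}}$. The key computation is to track what $\rho_{k+1,k}$ does in these coordinates. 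A vertex $v$ of the tree at level $k$ has $d$ children at level $k+1$; summing $n_j$ over the $d^{s+1}$ leaves below $v$ at level $k+1$ and dividing by $2$ recovers, modulo $2^{k+1}$, the quantity $\sum_{i} m_{\cdots}$ computed from the $m_i$'s at level $k$. Carrying this through, one finds $\rho_{k+1,k}$ acts on the $\sigma$-coordinate attached to a given tree vertex $v$ simply as the natural reduction $\mathcal{C}_{2^{(k+1)-s(v)}} \twoheadrightarrow \mathcal{C}_{2^{k-s(v)}}$, and the coordinates attached to distinct tree vertices do not mix. Hence the whole inverse system splits as a product, over all vertices $v$ of the tree $T$, of the inverse system $\big(\mathcal{C}_{2^{k - s(v)}},\ \text{reduction}\big)_{k \ge s(v)+1}$, whose limit is the $2$-adic integers $\mathbb{Z}_2$ — except that I must be careful that for a fixed vertex $v$ at tree-level $\ell$, the exponent $k - s(v)$ only starts making sense once $k \ge \ell$, and there $s(v)$ is determined by how ``deep'' $v$ sits along a path of leading-coordinate-$1$'s; I would reorganize the bookkeeping so that the relevant index is the tree level $\ell$ at which a vertex first appears, giving for each such vertex a copy of $\mathbb{Z}_2$ but truncated, and the number of vertices contributing a $\mathcal{C}_{2^k}$ factor (and no higher) in the final product is $d^k - d^{k-1}$, matching the displayed answer $\prod_{k} \mathcal{C}_{2^k}^{d^k - d^{k-1}}$. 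Abelianness is immediate since each $\textup{St}(\widehat{k})/\textup{Rist}(\widehat{k}) \cong H$ is abelian by \tref{quotient} and an inverse limit of abelian groups is abelian.

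The main obstacle, I expect, is the indexing: verifying that under $\rho_{k+1,k}$ the block-sum coordinate at a vertex $v$ really does reduce cleanly without interference from sibling coordinates, and correctly identifying, for each $k$, exactly which vertices of $T$ contribute a factor $\mathcal{C}_{2^k}$ that is ``new'' at level $k$ (as opposed to being a truncation of a deeper $\mathbb{Z}_2$). Concretely one must check that the count of vertices $v = d^s r$ at tree-level $k$ with $d \ndivides r$, weighted so that the stable cyclic factor has order exactly $2^k$, equals $(d-1)d^{k-1} = d^k - d^{k-1}$; this is the same combinatorial census already used at the end of the proof of \tref{quotient}, so it should go through, but it requires care to phrase the inverse limit so that each $\mathbb{Z}_2$ is ``assembled once'' rather than double-counted across levels. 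Once the system is shown to be a product of towers of the form $\mathbb{Z}_2 \twoheadleftarrow \cdots$, reading off $\prod_{k \in \mathbb{N}} \mathcal{C}_{2^k}^{d^k - d^{k-1}}$ is a matter of collecting, for each $k$, the $\mathbb{Z}_2$-towers whose bottom non-trivial stage is $\mathcal{C}_{2^k}$.
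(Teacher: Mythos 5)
Your overall strategy---decomposing the inverse system $\underset{k \ge 1}{\underleftarrow{\text{lim}}}\ \textup{St}(\widehat{k})/\textup{Rist}(\widehat{k})$ into towers indexed by the block-sum coordinates of \tref{quotient}---is viable and genuinely different from the paper's argument, but your key computation of what $\rho_{k+1,k}$ does to those coordinates is wrong, and the error is fatal to the way you extract the answer. Write $\sigma^{(k)}_{d^sr}=\tfrac{1}{2^{s+1}}\sum_{i=1}^{d^s}n_{k,(r-1)d^s+i}\ (\textup{mod}\ 2^{k-s})$. Substituting $m_i\equiv\tfrac12\sum_{j}n_{k+1,(i-1)d+j}$ into this expression shows that $\rho_{k+1,k}$ carries the level-$(k{+}1)$ coordinate at index $d^{s+1}r$ onto the level-$k$ coordinate at index $d^{s}r$: the block index shifts by one factor of $d$ per level, so the coordinates attached to a fixed tree vertex do not form a chain of reductions. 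The tower attached to the $r$-th vertex at depth $\ell$ (with $d\ndivides r$) is in fact
$$\mathcal{C}_{2^{\ell+1}}\ \xleftarrow{\ \times 2\ }\ \mathcal{C}_{2^{\ell}}\ \xleftarrow{\ \mathrm{id}\ }\ \mathcal{C}_{2^{\ell}}\ \xleftarrow{\ \mathrm{id}\ }\ \cdots,$$
which is eventually constant with limit $\mathcal{C}_{2^{\ell}}$ --- not a tower of proper surjections with limit the $2$-adic integers. This matters: if your claim that each bonding map is the natural reduction $\mathcal{C}_{2^{(k+1)-s(v)}}\twoheadrightarrow\mathcal{C}_{2^{k-s(v)}}$ were correct, every tower would have limit $\mathds{Z}_2$ and the rigid kernel would be a torsion-free product of copies of $\mathds{Z}_2$, which is not the stated group (and contradicts the corollary that the kernel is not torsion-free). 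Your hedge about ``truncated'' copies of $\mathds{Z}_2$ and about collecting towers by their ``bottom non-trivial stage'' cannot repair this, because an inverse limit of surjections $\cdots\twoheadrightarrow\mathcal{C}_{2^{k+1}}\twoheadrightarrow\mathcal{C}_{2^{k}}$ is $\mathds{Z}_2$ no matter where the tower begins; finite cyclic factors can only arise because the bonding maps stabilize to isomorphisms. With the corrected towers the census does come out right (the towers with limit $\mathcal{C}_{2^\ell}$ are indexed by $r<d^\ell$ with $d\ndivides r$, giving $d^\ell-d^{\ell-1}$ of them), but that is exactly the computation you left unverified.

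Two further points. First, you never record that every coordinate $n_{k,j}$ of an element of the rigid kernel is even (forced by the $r=1$ congruence one level down), which is why the $s=0$ coordinate contributes $\mathcal{C}_{2^{k}}$ rather than the $\mathcal{C}_{2^{k+1}}$ appearing in \tref{quotient}; this unaddressed point is the source of your off-by-one ambiguity about whether $\sigma_v^{(k)}$ lives in $\mathcal{C}_{2^{k-s(v)}}$ or $\mathcal{C}_{2^{k+1-s(v)}}$. Second, for comparison: the paper sidesteps the tower bookkeeping entirely by showing that the even values $n_{k,j}$ at the positions with $d\ndivides j$ may be prescribed arbitrarily and independently at every level, the positions with $d \divides j$ being uniquely determined by an explicit recursion; the isomorphism onto $\prod_{k}\mathcal{C}_{2^{k}}^{d^k-d^{k-1}}$ then simply reads off $n_{k,j}/2$ at the free positions. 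Your approach can be completed, but only after redoing the bonding-map computation correctly.
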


\begin{proof}
	In view of Equation (\ref{quot_elts}), note that the elements of $\textup{St}(\widehat{k})/\textup{Rist}(\widehat{k})$ are of the form $(a_1^{n_1}, \ldots, a_1^{n_{d^{k}}})\textup{Rist}(\widehat{k})$ with appropriate powers of $a_1$, i.e., for  $1 \le r\le k$ and $0 \le t < d^{k - r}$, $\sum_{i = 1}^{d^r}n_{d^rt + i} \equiv 0 \ (\textup{mod}\ 2^{r+1})$, where $0 \le n_{j} < 2^{k+1}$. Accordingly, as earlier, we denote an element $(a_1^{n_{k,1}}, \ldots, a_1^{n_{k, d^{k}}})\textup{Rist}(\widehat{k}) \in \textup{St}(\widehat{k})/\textup{Rist}(\widehat{k})$  by $\llbracket n_{k,j} \rrbracket_{1\le j \le d^k}$ or simply by $\llbracket n_{k,j} \rrbracket$, as the level parameter $k$ is  included as a subscript to the respective numbers. 
	Thus, as a subgroup of $\prod_{k \in \mathds{N}} \textup{St}(\widehat{k})/\textup{Rist}(\widehat{k})$, the rigid kernel of $G_d$ is 
	\begin{eqnarray}\label{rigid_set}
		\qquad \left\{\left(\llbracket n_{k,j} \rrbracket\right)_{k \in \mathds{N}} \in \prod_{k \in \mathds{N}} \textup{St}(\widehat{k})/\textup{Rist}(\widehat{k}) : n_{k, j} \equiv \frac{\sum_{i = 1}^{d}n_{k+1, jd - d + i}}{2}\ (\text{mod}\ 2^{k+1})
		\right\}.
	\end{eqnarray}
	By considering $r = 1$ in Equation (\ref{quot_elts}), observe that $\sum_{i = 1}^{d}n_{k+1, jd - d + i} \equiv 0\ (\text{mod}\ 4)$,  for $1 \le j \le d^k$. Hence, since $n_{k, j} \equiv \frac{\sum_{i = 1}^{d}n_{k+1, jd - d + i}}{2}\ (\text{mod}\ 2^{k+1})$ in the rigid kernel, we have $n_{k, j}$ is even for all $j$.
	
	For all $k \in \mathds{N}$ and $1 \le j \le d^k$ with $d \ndivides j$, given arbitrary even numbers $n_{k,j}$ in the range $0 \le n_{k,j} < 2^{k+1}$, we show that there exist unique numbers $n_{k,j}$ for $d \divides j$ such that $\left(\llbracket n_{k,j} \rrbracket\right)_{k \in \mathds{N}}$ in the rigid kernel, i.e., the numbers $n_{k, j}$'s satisfy congruences in equations (\ref{quot_elts}) and (\ref{rigid_set}). In fact, we construct $n_{1, d}$ using Equation (\ref{quot_elts}); then for $k \ge 2$, by induction, we construct $n_{k, j}$'s using Equation (\ref{rigid_set}) and show that $n_{k, j}$'s satisfy Equation (\ref{quot_elts}). 
	
	For $k = 1$, consider a number $n_{1, d}$ such that $n_{1,d} \equiv - \sum_{i=1}^{d-1}n_{1, i}\ (\text{mod}\ 4)$ and $0 \le n_{1,d} < 4$. Since $\sum_{i=1}^{d-1}n_{1, i}$ is even, observe that $n_{1,d}$ is even and unique satisfying $\sum_{i=1}^{d}n_{1, i}  \equiv 0\ (\text{mod}\ 4)$. 
	
	For $k \ge 1$, suppose there exists $n_{k, j}$ for $d \divides j$ and $1 \le j \le d^{k}$ as required for the rigid kernel. We now  show the existence of required numbers $n_{k+1, j}$ with $d \divides j$ for $(k+1)$-th level.  For $0 \le j \le d^{k+1}$ and $d \divides j$, consider $n_{k+1, j}$ in the range $0 \le n_{k+1, j} < 2^{k+2}$ as per the following: 
	\begin{equation}\label{h_ij}
		n_{k+1, j} \equiv 2n_{k, \frac{j}{d}}-\sum_{i=1}^{d-1}n_{k+1, j-d+i}\ (\text{mod}\ 2^{k+2})
	\end{equation}
	Since $d \divides j$, the numbers $n_{k+1, j-d+i}$ (for $1 \le i \le d-1$) on the right hand side of Equation (\ref{h_ij}) are available by the hypothesis as $d \ndivides j-d+i$. Since $\sum_{i=1}^{d-1}n_{k+1, j-d+i}$ is even, note that $n_{k+1, j}$ is even. Further, since $0 \le n_{k+1, j} < 2^{k+2}$, as per construction in Equation (\ref{h_ij}), $n_{k+1, j}$ is unique satisfying $n_{k, j} \equiv \frac{\sum_{i = 1}^{d}n_{k+1, jd - d + i}}{2}\ (\text{mod}\ 2^{k+1})$, as desired. We now show that $n_{k,j}$'s also satisfy Equation (\ref{quot_elts}) inductively. For $k \ge 1$, suppose $\sum_{s = 1}^{d^r}n_{k, d^rt + s} \equiv 0 \ (\textup{mod}\ 2^{r+1})$, for $1\le r\le k$ and $0 \le t < d^{k - r}$. Using Equation (\ref{h_ij}), substitute the values of $n_{k, d^rt + s}$ to get 
	$$\sum_{s = 1}^{d^r}\frac{\sum_{i=1}^{d}n_{k+1,(d^rt + s)d-d+i}}{2} \equiv 0 \ (\textup{mod}\ 2^{r+1}).$$ Hence, $\sum_{s = 1}^{d^{r+1}}n_{k+1,d^{r+1}t + s} \equiv 0 \ (\textup{mod}\ 2^{r+2}).$ Now set $r+1 = r'$ so that, for $2 \le r' \le k+1$ and $0 \le t < d^{k+1-r'}$, we have $\sum_{s = 1}^{d^{r'}}n_{k+1, d^{r'}t + s} \equiv 0 \ (\textup{mod}\ 2^{r'+1})$. Also, note that $\sum_{s=1}^{d}n_{k+1,j-d+s} \equiv 0\ (\text{mod}\ 4)$ because, by Equation (\ref{h_ij}), for $1 \le j \le d^{k+1}$ such that $d \divides j$, we have $ \sum_{s=1}^{d}n_{k+1,j-d+s} \equiv 2n_{k, \frac{j}{d}}\ (\text{mod}\ 2^{k+2})$ and $n_{k, \frac{j}{d}}$ is even. Therefore, for $1\le r' \le k+1$ and $0 \le t < d^{k+1-r'}$, we have $\sum_{s = 1}^{d^{r'}}n_{k+1, d^{r'}t + s} \equiv 0 \ (\textup{mod}\ 2^{r'+1})$. Hence, 
	$(\llbracket n_{k,j} \rrbracket)_{k \in \mathds{N}}$ is in the rigid kernel $\underset{k \ge 1}{\underleftarrow{\text{lim}}}\ \textup{St}(\widehat{k})/ \textup{Rist}(\widehat{k})$.

	Now consider the map 
		\begin{align*}
			\phi: \underset{k \ge 1}{\underleftarrow{\text{lim}}}\ \textup{St}(\widehat{k})/ \textup{Rist}(\widehat{k}) \to \prod_{k \ge 1} \mathcal{C}_{2^{k}}^{d^k-d^{k-1}},  
		\end{align*}  
	which sends $h = (\llbracket n_{1,j} \rrbracket, \llbracket n_{2,j} \rrbracket, \ldots)$ in the rigid kernel to $\eta = (\eta_1, \eta_2, \ldots)$, where $\eta_k$ is the $(d^k-d^{k-1})$-tuple  $\left(\frac{n_{k, j}}{2}\right)_{\underset{d \ndivides j}{1 \le j \le d^k}}$. Note that $\eta_k \in \mathcal{C}_{2^{k}}^{d^k-d^{k-1}}$. To show $\phi$ is a homomorphism, for $g_k = \llbracket m_{k,j} \rrbracket$ and $h_k = \llbracket n_{k,j} \rrbracket$, let $g = (g_k)_{k \in \mathds{N}}$ and $h = (h_k)_{k \in \mathds{N}}$ be in the rigid kernel. Note that $g_kh_k = \left(a_1^{p_{k, 1}}, \ldots, a_1^{p_{k, d^k}}\right)\textup{Rist}(\widehat{k}) = \llbracket p_{k,j} \rrbracket$, where $0 \le p_{k, j}(\equiv m_{k,j}+n_{k,j} (\text{mod}\ 2^{k+1})) < 2^{k+1}$. Now 
	\begin{eqnarray*}
		\phi(gh) &=& \phi \big(\left(g_kh_k\right)_{k \in \mathds{N}}\big)\\
		&=& \phi \big(\left(\llbracket p_{k,j} \rrbracket\right)_{k \in \mathds{N}}\big)\\
		&=& \left(\Big(\frac{p_{k, j}}{2}\Big)_{\underset{d \ndivides j}{1 \le j \le d^k}}\right)_{k \in \mathds{N}}\\
		&=& \left(\left(\frac{m_{k,j}}{2}\right)_{\underset{d \ndivides j}{1 \le j \le d^k}}\right)_{k \in \mathds{N}} + \left( \left(\frac{n_{k,j}}{2}\right)_{\underset{d \ndivides j}{1 \le j \le d^k}}\right)_{k \in \mathds{N}}\\
		& = & \phi(g) + \phi(h)
	\end{eqnarray*}
	
	To show that $\phi$ is onto, let $\eta = \left((\eta_{k, j})_{\underset{d \ndivides j}{1 \le j \le d^k}}\right)_{k \in \mathds{N}} \in \prod_{k \in \mathds{N}} \mathcal{C}_{2^{k}}^{d^k-d^{k-1}}$ where $\eta_{k, j} \in \mathcal{C}_{2^{k}}$. Then, for $k \in \mathds{N}$ and $1 \le j \le d^{k}$ with $d\ndivides j$, set $n_{k, j} = 2\eta_{k, j}$. Hence, for these values of $n_{k,j}$ with $d\ndivides j$, we get unique values for $n_{k,j}$ with $d \divides j$ and $h = (\llbracket n_{k,j} \rrbracket)_{k \in \mathds{N}} \in \underset{k \ge 1}{\underleftarrow{\text{lim}}}\ \textup{St}(\widehat{k})/ \textup{Rist}(\widehat{k})$ such that $\phi(h) = \eta$. 
	
	To show that $\phi$ is one-one, consider $g = (\llbracket m_{k,j} \rrbracket)_{k \in \mathds{N}}$ and $h = (\llbracket n_{k,j} \rrbracket)_{k \in \mathds{N}}$ in the rigid kernel such that $\phi(g) = \phi(h)$. Accordingly, for $d \ndivides j$, we have $\frac{m_{k, j}}{2} = \frac{n_{k, j}}{2}$. Thus, for $d \divides j$, $m_{k,j} = n_{k,j}$, as these values exist uniquely corresponding to the given values when $d \ndivides j$. Hence, $g = h$.
\end{proof}

It is evident from the structure of rigid kernel of $G_d$ (given Theorem \ref{rig_ker}) that it has both finite order and infinite order elements. Hence, we have the following corollary.
	
\begin{corollary}
	The rigid kernel of the group $G_d$ is neither torsion nor torsion-free. 
\end{corollary}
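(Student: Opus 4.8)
The plan is to read the claim off directly from the explicit description of the rigid kernel obtained in \tref{rig_ker}, namely that it is isomorphic to
$R := \prod_{k \in \mathds{N}} \mathcal{C}_{2^{k}}^{d^k-d^{k-1}}$.
First I would record the elementary observation that, since $d \ge 3$, each exponent $d^k - d^{k-1} = d^{k-1}(d-1)$ is a positive integer; hence for every $k \in \mathds{N}$ the cyclic group $\mathcal{C}_{2^k}$ genuinely occurs as (at least one) direct factor of $R$, and these factors have unbounded order.

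To see that $R$ is not torsion-free, I would fix any $k \in \mathds{N}$ and let $y \in R$ be the element whose $k$-th block is a generator of one copy of $\mathcal{C}_{2^k}$ and whose every other coordinate is trivial. Then $y \neq 0$ while $2^k y = 0$, so $R$ contains a nontrivial element of finite order.

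To see that $R$ is not torsion, I would exhibit an element of infinite order: let $x = (x_k)_{k \in \mathds{N}} \in R$, where $x_k \in \mathcal{C}_{2^k}^{d^k-d^{k-1}}$ is a generator of $\mathcal{C}_{2^k}$ in its first coordinate and zero in the remaining coordinates. For any integer $n \ge 1$, choosing $k$ with $2^k > n$, the $k$-th block of $nx$ equals $nx_k \neq 0$ in $\mathcal{C}_{2^k}$, so $nx \neq 0$. Hence $x$ has infinite order, and $R$ is not torsion.

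There is no genuine obstacle here: the corollary is an immediate consequence of \tref{rig_ker}, the only point that uses the standing hypothesis $d \ge 3$ (in fact any $d \ge 2$ would do) being the remark that the cyclic factors $\mathcal{C}_{2^k}$ of order $2^k$ all appear in the product, which forces the presence of both a nonzero torsion element and an element of infinite order.
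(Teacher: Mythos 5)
Your proof is correct and follows the same route as the paper, which simply observes that the explicit product structure $\prod_{k \in \mathds{N}} \mathcal{C}_{2^{k}}^{d^k-d^{k-1}}$ from Theorem \ref{rig_ker} visibly contains both nontrivial torsion elements and (because the cyclic factors have unbounded order) elements of infinite order. You have merely spelled out the witnesses explicitly, which the paper leaves as evident.
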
 	

\begin{proposition}
	In $G_d$, the branch kernel is non-trivial.
\end{proposition}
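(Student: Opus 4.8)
The plan is to show that the branch topology on $G_d$ is strictly coarser than the full profinite topology, which is equivalent to the branch kernel $\ker(\widehat{G_d}\twoheadrightarrow\widetilde{G_d})$ being non-trivial. Concretely, it suffices to exhibit one finite-index normal subgroup of $G_d$ that is not open in the branch topology, i.e.\ that contains no level rigid stabilizer $\textup{Rist}(\widehat k)$; since the branch topology is generated by the $\textup{Rist}(\widehat k)$, such a subgroup shows that not every profinite-open subgroup is branch-open, hence that the natural surjection $\widehat{G_d}\twoheadrightarrow\widetilde{G_d}$ fails to be injective.

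The subgroup I would use is $H_1=\{w\in G_d:\ |w|_A\equiv 0\ (\textup{mod}\ 4)\}$. By Theorem~\ref{exponent_sum_d} the total exponent sum $g\mapsto|g|_A$ is well defined on $G_d$ (it is additive on words over $\tilde{A}$ and, by that theorem, vanishes on every relator), so $g\mapsto|g|_A\ (\textup{mod}\ 4)$ is a homomorphism $G_d\twoheadrightarrow\mathds{Z}/4\mathds{Z}$ --- onto since $a_1\mapsto 1$ --- whose kernel is exactly $H_1$. Hence $H_1$ is a normal subgroup of index $4$ in $G_d$.

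The remaining step is to verify that $\textup{Rist}(\widehat k)\not\subseteq H_1$ for \emph{every} $k\in\mathds{N}$; this genuinely must be checked at all levels, since $\textup{Rist}(\widehat{k+1})\subseteq\textup{Rist}(\widehat k)$ shrinks as $k$ grows, so arguing at a single level is not enough. Fix $k$ and consider $g_k:=\psi_k^{-1}\big(a_1^{2^{k+1}},e,\dots,e\big)$. Since $|a_1^{2^{k+1}}|_A=2^{k+1}\equiv 0\ (\textup{mod}\ 2^{k+1})$ we have $a_1^{2^{k+1}}\in H_k$, so by Theorem~\ref{rist} --- more precisely by the identification $\textup{Rist}(\underset{\text{length }k}{11\ldots 1})=H_k\times\{e\}\times\cdots\times\{e\}$ obtained in its proof --- the element $g_k$ lies in $\textup{Rist}(\underset{\text{length }k}{11\ldots 1})\subseteq\textup{Rist}(\widehat k)$. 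On the other hand, iterating Remark~\ref{length_twice} over the first $k$ levels shows that the sum of the $A$-exponent-sums of the $d^k$ level-$k$ sections of $g_k$ equals $2^k|g_k|_A$; those sections are $a_1^{2^{k+1}}$ together with copies of $e$, so the sum is $2^{k+1}$, giving $|g_k|_A=2$. Thus $g_k\in\textup{Rist}(\widehat k)$ but $g_k\notin H_1$ (as $2\not\equiv 0\ (\textup{mod}\ 4)$), so $H_1$ contains no $\textup{Rist}(\widehat k)$, completing the argument.

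The only point requiring real care is this last step: one has to produce, \emph{uniformly in $k$}, an element of $\textup{Rist}(\widehat k)$ of total exponent sum $\not\equiv 0\ (\textup{mod}\ 4)$, and here is where the explicit description of the level rigid stabilizers from Theorem~\ref{rist} and the doubling of exponent sums under passing to sections (Remark~\ref{length_twice}) are used. Once those are in hand, $g_k=\psi_k^{-1}(a_1^{2^{k+1}},e,\dots,e)$ is an easy uniform choice and everything else is bookkeeping. (One may note in passing that the image of $\textup{Rist}(\widehat k)$ in $\mathds{Z}/4\mathds{Z}$ is in fact $\{0,2\}$ for every $k$, which is consistent with, but stronger than, what is needed.)
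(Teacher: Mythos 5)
Your proposal is correct and follows essentially the same route as the paper: both take the index-$4$ normal subgroup $\{w\in G_d:|w|_A\equiv 0\ (\textup{mod}\ 4)\}$ and, for each $k$, the element $g_k$ with $\psi_k(g_k)=(a_1^{2^{k+1}},e,\dots,e)$ lying in $\textup{Rist}(\widehat k)$ by Theorem~\ref{rist} but having $|g_k|_A=2$ by Remark~\ref{length_twice}. Your write-up merely makes explicit a few points the paper leaves implicit (the exponent-sum homomorphism onto $\mathds{Z}/4\mathds{Z}$ and the iterated doubling computation of $|g_k|_A$).
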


\begin{proof}
	Consider the subgroup $H = \{w \in G_d : |w|_A \equiv 0\ (\textup{mod}\ 4)\}$. Clearly $G_d/ H = \{H, a_1H, a_1^2H, a_1^3H\}$. For each $k \in \mathds{N}$, there exists $g_k \in \textup{Rist}(\widehat{k})$ such that $\psi_k(g_k) = (a_1^{2^{k+1}}, e, \ldots, e)$. By Remark \ref{length_twice}, observe that $|g_k|_A = 2$, which implies $g_k \notin H$. Hence, for any $k$, $\textup{Rist}(\widehat{k})$ is not a subgroup of $H$.
\end{proof}

The notion of the Hausdorff dimension was initially defined as a measure of fractalness of sets over the reals and then it is generalized to metric spaces. In \cite{Barnea_1997} Barnea and Shalev adopted the concept of  Hausdorff dimension to metric spaces arising in profinite groups. Since $\text{Aut}(T)$ is a profinite group, for a subgroup $G$ of $\text{Aut}(T)$, the Hausdorff dimension $\dim_H(\overline{G})$ of the closure of $G$, as given by the following formula, was computed for various groups (e.g., \cite{Alcober_2014}):  $$\dim_H(\overline{G}) = \liminf_{k \to \infty} \frac{\log \ [G : {\textup{St}_{G}(\widehat{k})}]}{\log \ [\textup{Aut}(T) : {\textup{St}_{\textup{Aut}(T)}(\widehat{k})}]}.$$

In the following result, we determine the index of a level stabilizer in $G_d$; using which, we compute the Hausdorff dimension of $G_d$ in Theorem \ref{haus}.

\begin{theorem}
	For $k \ge 1$, the index of ${\textup{St}(\widehat{k+1})}$ in ${\textup{St}(\widehat{k})}$ and hence the index of ${\textup{St}(\widehat{k+1})}$ in $G_d$ are given by \[\left[{\textup{St}(\widehat{k})} : {\textup{St}(\widehat{k+1})}\right] = \frac{{d!}^{d^{k}}}{2^{d^{k-1}}} \qquad \text{and}\qquad \left[G_d : {\textup{St}(\widehat{k+1})}\right] = \frac{{d!}^{\frac{d^{k+1}-1}{d-1}}}{2^{\frac{d^{k}-1}{d-1}}}.\]
\end{theorem}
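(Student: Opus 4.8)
The plan is to compute $[\textup{St}(\widehat{k}):\textup{St}(\widehat{k+1})]$ by factoring through the rigid stabilizer, using the tower $\textup{St}(\widehat{k+1}) \le \textup{St}(\widehat{k})$ together with the already-established structure theorems. Concretely, I would write
\[
[\textup{St}(\widehat{k}):\textup{St}(\widehat{k+1})] = \frac{[\textup{St}(\widehat{k}):\textup{Rist}(\widehat{k})]\cdot[\textup{Rist}(\widehat{k}):\textup{St}(\widehat{k+1})\cap\textup{Rist}(\widehat{k})]}{[\textup{St}(\widehat{k+1}):\textup{St}(\widehat{k+1})\cap\textup{Rist}(\widehat{k})]},
\]
but it is cleaner to run the computation one level down: $\psi_k$ identifies $\textup{St}(\widehat{k+1})$ with a subgroup of $\stackrel{d^k}{G_d\times\cdots\times G_d}$, and an element $(g_1,\dots,g_{d^k})$ of $\textup{St}(\widehat{k})$ lies in $\textup{St}(\widehat{k+1})$ precisely when each $g_i \in \textup{St}(\widehat 1)$ (there is no further congruence coupling across the $d^k$ coordinates at the top level beyond what $\textup{St}(\widehat k)$ already imposes, because the new constraints are exactly the $d^k$ conditions $g_i\in\textup{St}(\widehat 1)$ and, by \tref{stab}, these are independent congruences on the $|h_{(i-1)d+j}|_A$). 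So the real content is: inside $\textup{St}(\widehat k)/\textup{Rist}(\widehat k)$, which by \tref{quotient} is the explicit abelian group $H$ expressed in powers of $a_1$, how large is the image of $\textup{St}(\widehat{k+1})$?

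First I would make precise, via the Remark preceding \tref{rig_ker}, the relationship between the $(k+1)$-st level powers $(n_1,\dots,n_{d^{k+1}})$ and the induced $k$-th level data: $|h_j|_A = \tfrac12\sum_{i=1}^d n_{(j-1)d+i}$, and the resulting coset in $\textup{St}(\widehat k)/\textup{Rist}(\widehat k)$ is $\llbracket m_j\rrbracket$ with $m_j\equiv|h_j|_A\pmod{2^{k+1}}$. Second, I would count directly: the index $[G_d:\textup{St}(\widehat 1)]=d!$ since $G_d\twoheadrightarrow S_d$ (Remark~\ref{odd_even}) with kernel $\textup{St}(\widehat 1)$, and $[G_d:\textup{St}(\widehat 1)\ \text{refined by the }\!\!\pmod 4\text{ condition}]$ gives the per-coordinate contribution. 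Passing through $\psi_k$: each of the $d^k$ coordinates contributes a factor of $[G_d:\textup{St}(\widehat 1)]=d!$ to $[\textup{St}(\widehat k):\textup{St}(\widehat{k+1})]$ from the symmetric-group action, but the coordinates are not free — the element of $\stackrel{d^k}{G_d\times\cdots}$ must already be a valid $k$-th level stabilizer element, and the $\!\!\pmod{2^{r+1}}$ congruences of \tref{stab} at levels $r=1,\dots,k$ overcount by exactly the factor $2^{d^{k-1}}$ in the denominator. I would pin down the exponent $d^{k-1}$ by observing it is the number of $\!\!\pmod 4$ conditions (one for each block of $d$ consecutive coordinates among the $d^{k}$ coordinates at level $k$, i.e. $d^{k-1}$ blocks) that become redundant when we already know membership in $\textup{St}(\widehat{k+1})$ forces each $g_i\in\textup{St}(\widehat 1)$, hence each such block sum is automatically $\equiv 0\pmod 4$.

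For the second formula I would simply telescope:
\[
[G_d:\textup{St}(\widehat{k+1})] = [G_d:\textup{St}(\widehat 1)]\prod_{j=1}^{k}[\textup{St}(\widehat j):\textup{St}(\widehat{j+1})] = d!\cdot\prod_{j=1}^{k}\frac{{d!}^{d^{j}}}{2^{d^{j-1}}} = \frac{{d!}^{1+d+\cdots+d^{k}}}{2^{1+d+\cdots+d^{k-1}}} = \frac{{d!}^{\frac{d^{k+1}-1}{d-1}}}{2^{\frac{d^{k}-1}{d-1}}},
\]
using the base case $[G_d:\textup{St}(\widehat 1)]=d!$ and the geometric-series identities $1+d+\cdots+d^{k}=\frac{d^{k+1}-1}{d-1}$ and $1+d+\cdots+d^{k-1}=\frac{d^{k}-1}{d-1}$. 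The main obstacle I anticipate is the careful bookkeeping in the first formula: justifying rigorously that the only new relations cutting $\textup{St}(\widehat k)$ down to $\textup{St}(\widehat{k+1})$ are the $d^k$ ``$g_i\in\textup{St}(\widehat 1)$'' conditions, that these impose exactly $d^k$ independent $\!\!\pmod 4$ congruences on the exponent sums minus $d^{k-1}$ that are forced by the level-$k$ constraints already in force, and hence the net index is $({d!}/1)^{d^k}$ corrected by $2^{-d^{k-1}}$ — this is most safely done by the explicit coordinate description of \tref{stab} rather than by abstract index juggling, so I would phrase it that way.
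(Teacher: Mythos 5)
Your skeleton is the right one and, in its ``count directly'' form, coincides with the paper's: the index $[\textup{St}(\widehat{k}):\textup{St}(\widehat{k+1})]$ is the size of the image of the map $g\mapsto(\lambda_{g|_u})_{u\in X^k}$ on $\textup{St}(\widehat{k})$, whose kernel is $\textup{St}(\widehat{k+1})$; the mod-$4$ block congruences of \tref{stab}, together with the fact that $\lambda_h$ is even iff $|h|_A$ is even, confine this image to the set of tuples in $(S_d)^{d^k}$ having an even number of odd permutations in each of the $d^{k-1}$ blocks of $d$ consecutive coordinates, a set of size $(d!)^{d^k}/2^{d^{k-1}}$; and your telescoping for the second formula is exactly the paper's. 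However, as written you only obtain the upper bound $[\textup{St}(\widehat{k}):\textup{St}(\widehat{k+1})]\le (d!)^{d^k}/2^{d^{k-1}}$. The missing half --- which is the actual technical content of the paper's proof --- is surjectivity: every parity-admissible tuple $(\lambda_1,\dots,\lambda_{d^k})$ must be realized as the tuple of first-level actions of the level-$k$ sections of some element of $\textup{St}(\widehat{k})$. This does not follow from saying some congruences ``become redundant'': the conditions of \tref{stab} constrain the integers $|g|_u|_A$ for all $r\le k$ (not just $r=1$), and a priori they could couple the permutations $\lambda_{g|_u}$ beyond their signs. The paper closes this gap constructively: by \rref{odd_even} and multiplying by even powers of $a_1$, for every $\lambda\in S_d$ and every integer $n$ with $(-1)^n=\textup{sgn}(\lambda)$ there is $g\in G_d$ with $\lambda_g=\lambda$ and $|g|_A=n$; choosing the values $1,-1,0$ within each block so that every block sum is exactly $0$ makes all congruences of \tref{stab} hold, so the tuple lifts. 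Some version of this step is indispensable.

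A secondary point: your framing of ``the real content'' as the size of the image of $\textup{St}(\widehat{k+1})$ inside $\textup{St}(\widehat{k})/\textup{Rist}(\widehat{k})$ is incorrect and should be discarded. That index equals $[\textup{St}(\widehat{k}):\textup{St}(\widehat{k+1})]$ only if $\textup{Rist}(\widehat{k})\le\textup{St}(\widehat{k+1})$, which fails: $|\xi_1|_A=0$, so $\xi_1\in H_k$ and by \tref{rist} the element $g$ with $\psi_k(g)=(\xi_1,e,\dots,e)$ lies in $\textup{Rist}(\widehat{k})$, yet $\lambda_{\xi_1}=(1\ 3)(2\ 4)\ne 1$, so $g\notin\textup{St}(\widehat{k+1})$. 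More structurally, $\textup{St}(\widehat{k})/\textup{Rist}(\widehat{k})$ only records the residues $|g|_u|_A \pmod{2^{k+1}}$ (\tref{quotient}), which cannot detect whether $g|_u\in\textup{St}(\widehat{1})$. Stick with the direct permutation count and supply the realizability argument.
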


\begin{proof}
	Here, for an element $g \in \text{Aut}(T)$, we write $\lambda_{g}^{(k)}$ to represent the induced action of $g$ on $X^k$ so that $\lambda_g^{(1)} = \lambda_g$. 
	Let $H = \textup{St}(\widehat{k+1})$. For $g, h \in \textup{St}(\widehat{k})$, we have $$gH = hH \Leftrightarrow g^{-1}h \in H \Leftrightarrow \lambda_{g^{-1}h}^{(k+1)} = 1  \Leftrightarrow \lambda_{g}^{(k+1)} = \lambda_{h}^{(k+1)} \Leftrightarrow \lambda_{g}^{(k+1)}(ux) = \lambda_{h}^{(k+1)}(ux),$$ for any $u \in X^{k}$ and $x \in X$, i.e., $u\lambda_{g|_{u}}^{(1)}(x) = u\lambda_{h|_{u}}^{(1)}(x) \Leftrightarrow \lambda_{g|_{u}}(x) = \lambda_{h|_{u}}(x).$ 
	Hence, for $g, h \in \textup{St}(\widehat{k})$, we have $gH \neq hH$ if and only if $\lambda_{g|_{u}} \neq \lambda_{h|_{u}}$, for some $u \in X^{k}$. Accordingly, the index $[{\textup{St}(\widehat{k})} : {\textup{St}(\widehat{k+1})}]$ equals 
	$$|\{(\lambda_{g_1}, \ldots, \lambda_{g_{d^{k}}}) : \psi_{k}(g) = (g_1, \ldots, g_{d^{k}}), \text{ for } g \in \textup{St}(\widehat{k}) \}|.$$
	 For $g \in \textup{St}(\widehat{k})$, if $\psi_{k}(g) = (g_1, \ldots, g_{d^{k}})$, then we have $\sum_{i = 1}^{d}|g_{rd + i}|_A \equiv 0(\text{mod }4)$ for all $0 \le r  < d^{k-1}$, by Theorem \ref{stab}. Clearly, for each $0 \le r < d^{k-1}$,  $|g_{rd + i}|_A$ is even for odd number of $i$'s in $\{1, \ldots, d\}$ and $|g_{rd + i}|_A$ is odd for remaining even number of $i$'s (as $d$ is odd). For any $h \in G_d$, since $|h|_A$ is even if and only if $\lambda_{h}$ is an even permutation, we have the following: for each $0 \le r  < d^{k-1}$, $\lambda_{g_{rd + i}}$ is an even permutation for odd number of $i$'s in $\{1, \ldots, d\}$ and $\lambda_{g_{rd + i}}$ is an odd permutation for even number of $i$'s.
	 
We claim that, for any odd integer $n \in \mathds{Z}$ and every odd permutation $\lambda$ in $S_d$, there exists a $g \in G_d$ such that $\lambda_g = \lambda$ and $|g|_A = n$. First note that, by Remark \ref{odd_even}, there exists $h \in G_d$ such that $\lambda_h = \lambda$ and $|h|_A$ is odd, say $m$. Now consider $g = ha_1^{n-m} \in G_d$ and observe that $|g|_A = n$. Further, since $n-m$ is even and $\lambda_{a_1^2} = 1$, we have $\lambda_g = \lambda_h\lambda_{a_1^{n-m}} = \lambda_h = \lambda$. Similarly, for any even integer $n \in \mathds{Z}$ and every even permutation $\lambda$ in $S_d$, there exists a $g \in G_d$ such that $\lambda_g = \lambda$ and $|g|_A = n$.  

  Consider the set $\mathcal{B}$ of $d^{k}$-tuples $(\lambda_1,  \ldots, \lambda_{d^{k}})$ consisting of permutations in $S_d$ such that, for $0 \le i < d^{k-1}$, in every block $B_i = \{\lambda_{di + 1}, \ldots, \lambda_{di+d}\}$ of $d$ permutations, the number of odd permutations is even. Let $(\lambda_1,  \ldots, \lambda_{d^{k}}) \in \mathcal{B}$ with $2t_i$ number of odd permutations in the corresponding $B_i$. We show that there exists $g \in \textup{St}(\widehat{k})$ with $\psi_{k}(g) = (g_1, \ldots, g_{d^{k}})$ such that $(\lambda_{g_1}, \ldots, \lambda_{g_{d^{k}}}) = (\lambda_1,  \ldots, \lambda_{d^{k}})$. Corresponding to odd permutations in $B_i$, consider the integers $1$ and $-1$ for $t_i$ times each; and the integer $0$ for each of the remaining even permutations.  For each of these integers and the corresponding odd or even permutation $\lambda_i$, by the previous paragraph, there exists $g_i \in G_d$ such that $\lambda_{g_i} = \lambda_i$ and $|g_i|_A$ is the corresponding integer $1$ or $-1$ or $0$. Note that, for each $0 \le i < d^{k-1}$, $\sum_{j=1}^d |g_{di + j}|_A = t_i \times 1 + t_i \times (-1) + (d-2t_i) \times 0 = 0$. Therefore, for $1 \le r \le k$ and $0 \le t < d^{k - r}$, 
  $$\sum_{l = 1}^{d^r}|g_{d^rt + l}|_A  = 0.$$
	Thus, by Theorem \ref{stab}, there is $g \in \textup{St}(\widehat{k})$ with $\psi_{k}(g) = (g_1, \ldots, g_{d^{k}})$, as desired. Hence, the index $[{\textup{St}(\widehat{k})} : {\textup{St}(\widehat{k+1})}] = |\mathcal{B}|$. We now count the number of elements in $\mathcal{B}$ as per the following. Note that the number of blocks $B_i$'s having odd permutations is $\sum_{l = 0}^{d^{k-1}}\binom{d^{k-1}}{l}$. Corresponding to each such combination of $l$ blocks, the number of positions consisting of odd permutations is ${\Big(\sum_{j = 1}^{\frac{d-1}{2}}\binom{d}{2j}\Big)}^l$. Further, at each position in $d^k$-tuple we have $d!/2$ possibilities of even or odd permutations. Hence,
    \begin{eqnarray*}
    	\left[{\textup{St}(\widehat{k})} : {\textup{St}(\widehat{k+1})}\right] &=& {\biggl(\frac{d!}{2}\biggl)}^{d^{k}}\sum_{l = 0}^{d^{k-1}}\binom{d^{k-1}}{l}{\biggl[\sum_{j = 1}^{\frac{d-1}{2}}\binom{d}{2j}\biggl]}^l\\
    	&=& {\biggl(\frac{d!}{2}\biggl)}^{d^{k}}\sum_{l = 0}^{d^{k-1}}\binom{d^{k-1}}{l}{(2^{d-1}-1)}^l\\
    	&=& {\biggl(\frac{d!}{2}\biggl)}^{d^{k}}{(1 + (2^{d-1}-1))}^{d^{k-1}}\\
    	&=& \frac{{d!}^{d^{k}}}{2^{d^{k-1}}}.
    \end{eqnarray*}
    Also, since $S_d$ is generated by $\lambda_{a_i}$'s, $[G_d : {\textup{St}(\widehat{1})}] = d!$. Hence, 
    \begin{eqnarray*}
    	[G_d : {\textup{St}(\widehat{k+1})}] &=& [G_d : {\textup{St}(\widehat{1})}] [{\textup{St}(\widehat{1})} : {\textup{St}(\widehat{2})}] \cdots [{\textup{St}(\widehat{k})} : {\textup{St}(\widehat{k+1})}]\\
    	&=& d! \ \frac{{d!}^{d}}{2} \frac{{d!}^{d^{2}}}{2^{d}}\frac{{d!}^{d^{3}}}{2^{d^{2}}}\cdots \frac{{d!}^{d^{k}}}{2^{d^{k-1}}}\\
    	&=& \frac{{d!}^{\frac{d^{k+1}-1}{d-1}}}{2^{\frac{d^{k}-1}{d-1}}}.
    \end{eqnarray*}
\end{proof}

\begin{theorem}\label{haus}
	$\dim_H(\overline{G_d}) = 1 - \dfrac{\log2}{d\log d!}$.
\end{theorem}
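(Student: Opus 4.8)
The plan is to substitute the two index formulas into the Hausdorff dimension formula and evaluate the resulting limit. First I would record the denominator: since $\text{Aut}(T) \cong \text{Aut}(T)\wr S_d$ iterated, and the vertices of $T$ strictly above level $k$ are exactly the $1 + d + \cdots + d^{k-1} = \frac{d^k-1}{d-1}$ vertices of $X^0 \cup X^1 \cup \cdots \cup X^{k-1}$, each contributing an independent factor of $S_d$ to the action on $X^k$, we have
$$\left[\text{Aut}(T) : \text{St}_{\text{Aut}(T)}(\widehat{k})\right] = (d!)^{\frac{d^k - 1}{d-1}}.$$
For the numerator I would use the index computed in the previous theorem, which (after absorbing the base case $[G_d : \text{St}(\widehat 1)] = d!$ into the same closed form, as one checks at $k=1$) gives, for every $k \ge 1$,
$$\left[G_d : \text{St}(\widehat{k})\right] = \frac{(d!)^{\frac{d^k - 1}{d-1}}}{2^{\frac{d^{k-1} - 1}{d-1}}}.$$

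Next I would take logarithms and form the ratio. We get
$$\frac{\log\left[G_d : \text{St}(\widehat k)\right]}{\log\left[\text{Aut}(T) : \text{St}_{\text{Aut}(T)}(\widehat k)\right]} = \frac{\frac{d^k-1}{d-1}\log d! \;-\; \frac{d^{k-1}-1}{d-1}\log 2}{\frac{d^k-1}{d-1}\log d!} = 1 - \frac{(d^{k-1}-1)\log 2}{(d^k-1)\log d!}.$$
Since $\frac{d^{k-1}-1}{d^k-1} \to \frac{1}{d}$ as $k \to \infty$, the sequence on the right converges (so its $\liminf$ is just its limit) to $1 - \frac{\log 2}{d\log d!}$, which is the claimed value.

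There is essentially no genuine obstacle here beyond bookkeeping: the only point requiring care is keeping the level indices aligned between the two formulas (the earlier theorem is stated for $\text{St}(\widehat{k+1})$ with $k \ge 1$, so one must either reindex or verify the unified closed form above covers $k = 1$), and noting that because the quotient of logarithms actually converges rather than merely oscillating, the $\liminf$ in the definition of $\dim_H$ may be replaced by an honest limit. All remaining steps are the routine algebra of geometric sums shown above.
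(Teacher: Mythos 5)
Your proposal is correct and follows essentially the same route as the paper: substitute $[\textup{Aut}(T) : \textup{St}_{\textup{Aut}(T)}(\widehat{k})] = (d!)^{\frac{d^k-1}{d-1}}$ and the reindexed formula $[G_d : \textup{St}(\widehat{k})] = (d!)^{\frac{d^k-1}{d-1}}/2^{\frac{d^{k-1}-1}{d-1}}$ into the dimension formula and pass to the limit. Your remark that the $\liminf$ is an honest limit, and your check that the closed form is valid at $k=1$, are both correct.
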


\begin{proof}
	Note that
	\begin{eqnarray*}
		[\textup{Aut}(T) : \textup{St}_{\textup{Aut}(T)}(\widehat{k})]  & = &  |\underset{k\ \text{times}}{S_d \wr S_d \wr \cdots \wr S_d}|\\
	& = &  d!^{\frac{d^k-1}{d-1}}
	\end{eqnarray*}
	
    Hence,
    \begin{eqnarray*}
    	\dim_H(\overline{G_d}) 
    	& = & \displaystyle\liminf_{k \to \infty} \frac{\log \frac{{d!}^{\frac{d^{k}-1}{d-1}}}{2^{\frac{d^{k-1}-1}{d-1}}}}{\log d!^{\frac{d^k-1}{d-1}}}\\
    	& = & \liminf_{k \to \infty} \left(1 - \frac{d^{k-1}-1}{d^k - 1}\frac{\log 2}{\log d!}\right)\\
    	& = & 1 - \frac{\log2}{d\log d!}
    \end{eqnarray*}	
\end{proof}


\begin{thebibliography}{10}
	
	\bibitem{Barnea_1997}
	Y.~Barnea and A.~Shalev.
	\newblock Hausdorff dimension, pro-{$p$} groups, and {K}ac-{M}oody algebras.
	\newblock {\em Trans. Amer. Math. Soc.}, 349(12):5073--5091, 1997.
	
	\bibitem{Bartholdi2003presentation}
	L.~Bartholdi.
	\newblock Endomorphic presentations of branch groups.
	\newblock {\em J. Algebra}, 268(2):419--443, 2003.
	
	\bibitem{Bartholdi2003}
	L.~Bartholdi, R.~I. Grigorchuk, and Z.~\v{S}uni\'{k}.
	\newblock Branch groups.
	\newblock In {\em Handbook of algebra, {V}ol. 3}, pages 989--1112.
	Elsevier/North-Holland, Amsterdam, 2003.
	
	\bibitem{Bartholdi_2012}
	L.~Bartholdi, O.~Siegenthaler, and P.~Zalesskii.
	\newblock The congruence subgroup problem for branch groups.
	\newblock {\em Israel J. Math.}, 187:419--450, 2012.
	
	\bibitem{bartholdi2001word}
	L.~Bartholdi and Z.~\v{S}uni\'{k}.
	\newblock On the word and period growth of some groups of tree automorphisms.
	\newblock {\em Comm. Algebra}, 29(11):4923--4964, 2001.
	
	\bibitem{Bass_1964}
	H.~Bass, M.~Lazard, and J.-P. Serre.
	\newblock Sous-groupes d'indice fini dans {${\bf SL}(n,\,{\bf Z})$}.
	\newblock {\em Bull. Amer. Math. Soc.}, 70:385--392, 1964.
	
	\bibitem{Bou-Rabee2020}
	K.~Bou-Rabee, R.~Skipper, and D.~Studenmund.
	\newblock Commensurability growth of branch groups.
	\newblock {\em Pacific J. Math.}, 304(1):43--54, 2020.
	
	\bibitem{Dahmani2005}
	F.~Dahmani.
	\newblock An example of non-contracting weakly branch automaton group.
	\newblock In {\em Geometric methods in group theory}, volume 372 of {\em
		Contemp. Math.}, pages 219--224. Amer. Math. Soc., 2005.
	
	\bibitem{Domenico_2022_basilica}
	E.~Di~Domenico, G.~A. Fern\'andez-Alcober, M.~Noce, and A.~Thillaisundaram.
	\newblock {$p$}-{B}asilica groups.
	\newblock {\em Mediterr. J. Math.}, 19(6):Paper No. 275, 28, 2022.
	
	\bibitem{Garrido_2017}
	G.~A. Fern\'andez-Alcober, A.~Garrido, and J.~Uria-Albizuri.
	\newblock On the congruence subgroup property for {GGS}-groups.
	\newblock {\em Proc. Amer. Math. Soc.}, 145(8):3311--3322, 2017.
	
	\bibitem{Noce2020engel}
	G.~A. Fern\'{a}ndez-Alcober, M.~Noce, and G.~M. Tracey.
	\newblock Engel elements in weakly branch groups.
	\newblock {\em J. Algebra}, 554:54--77, 2020.
	
	\bibitem{Alcober_2014}
	G.~A. Fern\'andez-Alcober and A.~Zugadi-Reizabal.
	\newblock G{GS}-groups: order of congruence quotients and {H}ausdorff
	dimension.
	\newblock {\em Trans. Amer. Math. Soc.}, 366(4):1993--2017, 2014.
	
	\bibitem{Francoeur2020}
	D.~Francoeur.
	\newblock On maximal subgroups of infinite index in branch and weakly branch
	groups.
	\newblock {\em J. Algebra}, 560:818--851, 2020.
	
	\bibitem{francoeur2024structure}
	D.~Francoeur, R.~Grigorchuk, P.-H. Leemann, and T.~Nagnibeda.
	\newblock On the structure of finitely generated subgroups of branch groups.
	\newblock {\em arXiv preprint arXiv:2402.15496}, 2024.
	
	\bibitem{Garrido_2016_gupta}
	A.~Garrido.
	\newblock Abstract commensurability and the {G}upta-{S}idki group.
	\newblock {\em Groups Geom. Dyn.}, 10(2):523--543, 2016.
	
	\bibitem{Garrido_2016}
	A.~Garrido.
	\newblock On the congruence subgroup problem for branch groups.
	\newblock {\em Israel J. Math.}, 216(1):1--13, 2016.
	
	\bibitem{rigid2023branch}
	A.~Garrido and Z.~{\v{S}}uni{\'c}.
	\newblock Branch groups with infinite rigid kernel.
	\newblock {\em arXiv preprint arXiv:2310.06581}, 2023.
	
	\bibitem{Garrido_2019}
	A.~Garrido and J.~Uria-Albizuri.
	\newblock Pro-{$\mathcal{C}$} congruence properties for groups of rooted tree
	automorphisms.
	\newblock {\em Arch. Math. (Basel)}, 112(2):123--137, 2019.
	
	\bibitem{Grigorchuk2005}
	R.~Grigorchuk and Z.~\v{S}uni\'{c}.
	\newblock Self-similarity and branching in group theory.
	\newblock In {\em Groups {S}t. {A}ndrews 2005. {V}ol. 1}, volume 339 of {\em
		London Math. Soc. Lecture Note Ser.}, pages 36--95. Cambridge Univ. Press,
	Cambridge, 2007.
	
	\bibitem{Grigorchuk2006hanoi}
	R.~Grigorchuk and Z.~\v{S}uni\'{k}.
	\newblock Asymptotic aspects of {S}chreier graphs and {H}anoi {T}owers groups.
	\newblock {\em C. R. Math. Acad. Sci. Paris}, 342(8):545--550, 2006.
	
	\bibitem{Grigorchuk2000}
	R.~I. Grigorchuk.
	\newblock {\em Just Infinite Branch Groups}, pages 121--179.
	\newblock Birkh{\"a}user Boston, Boston, MA, 2000.
	
	\bibitem{grigorchuk2002torsion}
	R.~I. Grigorchuk and A.~\.{Z}uk.
	\newblock On a torsion-free weakly branch group defined by a three state
	automaton.
	\newblock {\em Internat. J. Algebra Comput.}, 12(1-2):223--246, 2002.
	
	\bibitem{grigorchuk1980burnside}
	R.~I. Grigor\v{c}uk.
	\newblock On {B}urnside's problem on periodic groups.
	\newblock {\em Funktsional. Anal. i Prilozhen.}, 14(1):53--54, 1980.
	
	\bibitem{gupta1983burnside}
	N.~Gupta and S.~Sidki.
	\newblock On the {B}urnside problem for periodic groups.
	\newblock {\em Math. Z.}, 182(3):385--388, 1983.
	
	\bibitem{mamaghani2011fractal}
	M.~J. Mamaghani.
	\newblock A fractal non-contracting class of automata groups.
	\newblock {\em Bull. Iranian Math. Soc.}, 29(2):51--64, 2003.
	
	\bibitem{Nekrashevych2005}
	V.~Nekrashevych.
	\newblock {\em Self-similar groups}, volume 117 of {\em Mathematical Surveys
		and Monographs}.
	\newblock American Mathematical Society, Providence, RI, 2005.
	
	\bibitem{Noce2021}
	M.~Noce.
	\newblock A family of fractal non-contracting weakly branch groups.
	\newblock {\em Ars Math. Contemp.}, 20(1):29--36, 2021.
	
	\bibitem{Noce2021Hausdorff}
	M.~Noce and A.~Thillaisundaram.
	\newblock Hausdorff dimension of the second {G}rigorchuk group.
	\newblock {\em Internat. J. Algebra Comput.}, 31(6):1037--1047, 2021.
	
	\bibitem{Pervova_2007}
	E.~Pervova.
	\newblock Profinite completions of some groups acting on trees.
	\newblock {\em J. Algebra}, 310(2):858--879, 2007.
	
	\bibitem{saha2023branch}
	S.~Saha and K.~V. Krishna.
	\newblock A branch group in a class of non-contracting weakly regular branch
	groups.
	\newblock {\em Internat. J. Algebra Comput.}, 34(06):901--918, 2024.
	
	\bibitem{Thesis_Skipper}
	R.~Skipper.
	\newblock {\em On a generalization of the Hanoi Towers group}.
	\newblock PhD thesis, Binghamton University, 2018.
	
	\bibitem{Skipper2020}
	R.~Skipper.
	\newblock The congruence subgroup problem for a family of branch groups.
	\newblock {\em Internat. J. Algebra Comput.}, 30(2):397--418, 2020.
	
\end{thebibliography}
\end{document}